\numberwithin{equation}{section}
\theoremstyle{plain}
\newtheorem{prop}{Proposition}[section]
\newtheorem{conj}[prop]{Conjecture}
\newtheorem{lemm}[prop]{Lemma}
\newtheorem{ques}[prop]{Question}
\theoremstyle{definition}
\newtheorem{defi}[prop]{Definition}
\newtheorem{rema}[prop]{Remark}
\newtheorem{exam}[prop]{Example}
\newcommand\AD[1]{{\scriptstyle\mathrm{A\hspace{-0.1ex}D}}(#1)}
\newcommand\agoes[1]{\mathrel{\overset{{}_{#1}}{\rightsquigarrow_{\!\!\!\!\!\!-}}}}
\newcommand\agoesR[1]{\mathrel{\overset{{}_{#1}}{\rightsquigarrow_{\!\!\!\!\!\!-\,\smallR}}}}
\newcommand\agoesRlong[1]{\mathrel{\overset{{}_{#1}}{\ \rightsquigarrow_{\!\!\!\!\!\!-\,\smallR}}}}
\newcommand\agozotR{\mathrel{\overset{0,1,2}{\ \rightsquigarrow_{\!\!\!\!\!\!-\,\smallR}}}}
\newcommand\agozotiR{\mathrel{\overset{0,1,2,\infty}{\ \rightsquigarrow_{\!\!\!\!\!\!-\,\smallR}}}}
\newcommand\AL[1]{\ell(#1)}
\newcommand\AN[1]{a(#1)}
\newcommand\ALL[2]{\ell_{#1}(#2)}
\newcommand\ar[1]{\rr_{[#1]}}
\newcommand\as[1]{\ss_{[#1]}}
\newcommand\aS{\underline{S}}
\newcommand\at[1]{\tt_{[#1]}}
\newcommand\atta[1]{\tta_{#1}}
\newcommand\attA[1]{\ttA_{#1}}
\newcommand\attb[1]{\ttb_{#1}}
\newcommand\attB[1]{\ttB_{#1}}
\newcommand\attc[1]{\ttc_{#1}}
\newcommand\attC[1]{\ttC_{#1}}
\newcommand\CC{C}
\newcommand\cl[1]{\overline{\vrule width0pt height5pt#1}}
\newcommand\dd{d}
\newcommand\DL[1]{{\scriptstyle\mathrm{D}}\smalll(#1)}
\newcommand\DR[1]{{\scriptstyle\mathrm{D}}\smallr(#1)}
\newcommand\ee{e}
\newcommand\eqR{\equiv_\smallR}
\newcommand\eqpR{\equiv_\smallR^{\scriptscriptstyle+}}
\newcommand\ew{\varepsilon}
\newcommand\ff{f}
\newcommand\ffs{\ff_{\hspace{-0.3ex}*\hspace{0.2ex}}}
\let\ge=\geqslant
\renewcommand\gg{g}
\newcommand\GG{G}
\newcommand\GGp{G^{\scriptscriptstyle+}}
\newcommand\ggs{\gg_{\hspace{-0.2ex}*\hspace{0.2ex}}}
\newcommand\goes[1]{\overset{{}_{#1}}{\rightsquigarrow}}
\newcommand\goesR[1]{\overset{{}_{#1}}{\rightsquigarrow}_{\hspace{-0.4ex}{\scriptscriptstyle R}}}
\newcommand\gozD{\mathrel{\overset{\scriptscriptstyle 0,D}{\rightsquigarrow}}}
\newcommand\gozDR{\mathrel{\gozD_{\hspace{-0.6ex}{\scriptscriptstyle R}}}}
\newcommand\gozDRh{\mathrel{\gozD_{\widehat\smallR}}}
\newcommand\gooR{\mathrel{\overset{1}{\ \rightsquigarrow_\smallR}}}
\newcommand\gotR{\mathrel{\overset{2}{\ \rightsquigarrow_\smallR}}}
\newcommand\gozoi{\mathrel{\overset{\scriptscriptstyle 0,1,\infty}{\rightsquigarrow}}}
\newcommand\gozoiR{\mathrel{\gozoi_{\hspace{-1.2ex}{\scriptscriptstyle R}}}}
\newcommand\gozo{\mathrel{\overset{\scriptscriptstyle 0,1}{\rightsquigarrow}}}
\newcommand\gozoR{\mathrel{\gozo_{\hspace{-0.4ex}{\scriptscriptstyle R}}}}
\newcommand\gozot{\mathrel{\overset{\scriptscriptstyle 0,1,2}{\rightsquigarrow}}}
\newcommand\gozotR{\mathrel{\gozot_{\hspace{-0.9ex}{\scriptscriptstyle R}}}}
\newcommand\gozotRe{\mathrel{\gozot_{\hspace{-0.9ex}{\scriptscriptstyle R}_{\hspace{-0.2ex}e}}}}
\newcommand\gozotRo{\mathrel{\gozot_{\hspace{-0.9ex}{\scriptscriptstyle R}_{\hspace{-0.2ex}1}}}}
\newcommand\gozotRt{\mathrel{\gozot_{\hspace{-0.9ex}{\scriptscriptstyle R}_{\hspace{-0.2ex}2}}}}
\newcommand\gozoti{\mathrel{\overset{\scriptscriptstyle 0,1,2,\infty}{\rightsquigarrow}}}
\newcommand\gozotiR{\mathrel{\gozoti_{\hspace{-1.8ex}{\scriptscriptstyle R}\hspace{0.5ex}}}}
\newcommand\gozt{\mathrel{\overset{\scriptscriptstyle 0,2}{\rightsquigarrow}}}
\newcommand\goztR{\gozt_{\hspace{-0.4ex}{\scriptscriptstyle R}}}
\newcommand\GR[2]{\langle#1{\mid}\nobreak#2\rangle}
\newcommand\hh{h}
\newcommand\HR[1]{{\scriptstyle\mathrm{H\hspace{-0.1ex}R}}(#1)}
\newcommand\ie{e}
\newcommand\ii{i}
\newcommand\ince{\subseteq}
\newcommand\inv{^{-1}}
\newcounter{ITEM}
\newcommand\ITEM[1]{\setcounter{ITEM}{#1}\leavevmode\hbox{\rm(\roman{ITEM})}}
\newcommand\jj{j}
\newcommand\kk{k}
\let\le=\leqslant
\newcommand\LG[1]{\vert#1\vert}%Length of a word
\newcommand\mm{m}
\newcommand\MM{M}
\newcommand\MON[2]{\langle#1{\mid}\nobreak#2\rangle^{\!\scriptscriptstyle+}}
\newcommand\NL[1]{{\scriptstyle\mathrm{N}}\smalll(#1)}
\newcommand\NR[1]{{\scriptstyle\mathrm{N}}\smallr(#1)}
\newcommand\oo[1]{o(#1)}%origin
\newcommand\pdots{\hspace{0.2ex}{\cdot}{\cdot}{\cdot}\hspace{0.5ex}}
\newcommand\pp{p}
\newcommand\PPP{\mathcal{P}}
\newcommand\PPPPe[3]{\mathcal{P}_{\!e}(#1, #2, #3)}
\newcommand\PPPPo[3]{\mathcal{P}_{1}(#1, #2, #3)}
\newcommand\Prop{\mathrm{H}}
\newcommand\Propp{\Prop^\sharp}
\newcommand\qq{q}
\newcommand\quot{{/}\hspace{-0.7ex}}
\newcommand\resp{{\it resp}}
\newcommand\rr{r}
\newcommand\RR{R}
\newcommand\RRh{\widehat\RR}
\newcommand\SG[1]{\langle\!\langle{#1}\rangle\!\rangle}
\newcommand\SGp[1]{\langle\!\langle{#1}\rangle\!\rangle^{\!\scriptscriptstyle+}}
\newcommand\smalll{_{\ell}}
\newcommand\smallr{_{r}}
\newcommand\smallR{{\!{}_\RR}}
\renewcommand\ss{s}
\renewcommand\SS{S}
\newcommand\tail[1]{\tau(#1)}
\renewcommand\tt{t}
\newcommand\tta{\mathtt{a}}
\newcommand\ttA{\mathtt{A}}
\newcommand\ttb{\mathtt{b}}
\newcommand\ttB{\mathtt{B}}
\newcommand\ttc{\mathtt{c}}
\newcommand\ttC{\mathtt{C}}
\newcommand\ttd{\mathtt{d}}
\newcommand\ttD{\mathtt{D}}
\newcommand\tte{\mathtt{e}}
\newcommand\ttf{\mathtt{f}}
\newcommand\TTT{\boldsymbol{T}}
\newcommand\tw{\widehat\ww}
\newcommand\uu{u}
\newcommand\vv{v}
\newcommand\vvs{\vv_{\hspace{-0.1ex}*\hspace{0.2ex}}}
\newcommand\wdots{, ...\hspace{0.2ex},}
\newcommand\ww{w}
\newcommand\wws{\ww_{\hspace{-0.1ex}*\hspace{0.2ex}}}
\newcommand\WWW[1]{\mathcal{W}(#1)}
\newcommand\WWWp[1]{\mathcal{W}^{\scriptscriptstyle+}\!(#1)}
\begin{document}

\author{Patrick DEHORNOY}

\address{Laboratoire de Math\'ematiques Nicolas Oresme, UMR 6139, UCBN et CNRS, 14032 Caen, France}
\email{patrick.dehornoy@unicaen.fr}
\urladdr{//www.math.unicaen.fr/\!\hbox{$\sim$}dehornoy}

\author{Eddy GODELLE}

\address{Laboratoire de Math\'ematiques Nicolas Oresme, UMR 6139, UCBN et CNRS, 14032 Caen, France}
\email{eddy.godelle@unicaen.fr}
\urladdr{//www.math.unicaen.fr/\!\hbox{$\sim$}godelle}

\title{A conjecture about Artin--Tits groups}

\keywords{Artin--Tits groups, positive presentation, van Kampen diagram, Dehn algorithm, amalgamated product, word problem, subword reversing}

\subjclass{20B30, 20F55, 20F36}

\begin{abstract}
We conjecture that the word problem of Artin--Tits groups can be solved without introducing trivial factors $\ss\ss\inv $ or $\ss\inv \ss$. Here we make this statement precise and explain how it can be seen as a weak form of hyperbolicity. We prove the conjecture in the case of Artin--Tits groups of type~FC, and we discuss various possible approaches for further extensions, in particular a syntactic argument that works at least in the right-angled case.
\end{abstract}

\thanks{Work partially supported by the ANR grant TheoGar ANR-08-BLAN-0269-02}

\maketitle

Artin--Tits groups, also known as Artin groups, are those groups that are defined by relations of the form
\begin{equation}
\label{E:Relation}
\tag{1}
\ss \tt \ss ... = \tt \ss \tt ...
\end{equation}
where both terms consist of two alternating letters and have the same length. These groups were first investigated by J.~Tits in the late 1960's \cite{Bri}. Two seminal references are~\cite{BrS} and~\cite{Dlg}. At the moment many questions involving these groups remain open, including the decidability of the word problem in the general case~\cite{Cha}. 

The aim of this paper is to discuss a conjecture about the word problem of Artin--Tits groups, namely that, when a word~$\ww$ represents~$1$, hence is equivalent to the empty word modulo the relations, then $\ww$ can be transformed to the empty word without introducing trivial factors~$\ss \ss\inv$ or~$\ss\inv \ss$ at any intermediate step of the derivation. Here we shall elaborate this intuitive idea into a precise statement, called Property~$\Prop$, and interpret the latter as a weak form of hyperbolicity (whence the notation). So, our conjecture is that all Artin--Tits groups---or, rather, all Artin--Tits presentations since Property~$\Prop$ involves a specific presentation---satisfy Property~$\Prop$. It is known that Artin--Tits groups are not hyperbolic in general and, therefore, our conjecture is about how much of hyperbolicity could nevertheless be true in these groups. The main positive result we prove here is that all Artin--Tits presentation of type~FC satisfy Property~$\Prop$. In terms of applications, Property~$\Prop$ does not imply the decidability of the word problem, at least directly, but it implies that the monoid defined by a family of relations embeds in the corresponding group. Such a result is known to be true for Artin--Tits groups~\cite{Par}, but, so far, the only available proof is quite indirect and relies on the existence of linear representations. If our conjecture is true, it would in particular lead to a more direct proof of that result.

The article is organized as follows. In Section~\ref{S:Prop}, we introduce Property~$\Prop$, connect it with hyperbolicity, and show as a warm-up exercise that all Artin--Tits presentations of spherical type satisfy Property~$\Prop$. We also mention the application to monoid embeddability. Next, in Section~\ref{S:FC}, we establish that all Artin--Tits presentations of type~FC satisfy Property~$\Prop$. Finally, in Section~\ref{S:RAAG}, we give an alternative proof of the result that all right-angled Artin--Tits presentations (a subfamily of type~FC) satisfy Property~$\Prop$. The current range of the argument is more limited than that of Section~\ref{S:FC}, but it might be more promising in terms of subsequent developments as it involves no normal form. 

%%%%

\section{Property~$\Prop$}
\label{S:Prop}

We introduce special transformations and Property~$\Prop$ in Subsection~\ref{SS:Special}. We discuss its connection with the Dehn algorithm in Subsection~\ref{SS:Dehn}. Then, in Subsection~\ref{SS:Spherical}, we establish that all Artin--Tits presentations of spherical type satisfy Property~$\Prop$. Finally, we mention in Subsection~\ref{SS:Embed} an application of Property~$\Prop$ to the embeddability of a monoid~$\MON\SS\RR$ in the corresponding group~$\GR\SS\RR$.

%%%%%%%%

\subsection{Special transformations}
\label{SS:Special}

For $\SS$ a nonempty set, we denote by~$\WWWp\SS$ the free monoid consisting of all words in the alphabet~$\SS$, and by~$\WWW\SS$ the free monoid of all words in~$\SS \cup \SS\inv $, where $\SS\inv $ denotes a disjoint copy of~$\SS$ containing one letter~$\ss\inv $ for each letter~$\ss$ of~$\SS$. By definition, a group~$\GG$ admits the presentation~$\GR\SS\RR$ if $\GG$ is isomorphic to $\WWW\SS \quot \eqR$, where $\eqR$ is the least congruence on~$\WWW\SS$ containing the relations of~$\RR$ plus the free group relations $\ss\inv \ss = \ss \ss\inv = 1$. For~$\ww$ in~$\WWW\SS$, we denote by~$\cl\ww$ the element of~$\GR\SS\RR$ represented by~$\ww$. We use~$\ew$ for the empty word, and, for~$\ww$ in~$\WWW\SS$, we denote by~$\ww\inv$ the word obtained from~$\ww$ by exchanging the letters~$\ss$ and~$\ss\inv$ everywhere and reversing their order. Finally, we say that a word~$\vv$ is a \emph{factor} of another word~$\ww$ if there exist~$\ww_1, \ww_2$ satisfying $\ww = \ww_1 \vv \ww_2$. 

Hereafter, we shall consider presentations of a special type.

\begin{defi}
We say that a group presentation~$(\SS, \RR)$ is \emph{positive} if all relations of~$\RR$ are of the form $\vv = \vv'$---or, equivalently, $\vv\inv \vv' = 1$---where $\vv, \vv'$ are nonempty words of~$\WWWp\SS$. 
\end{defi}

So the point is that the relations can be written with no letter~$\ss\inv$ and no empty word. As shows~\eqref{E:Relation}, Artin--Tits presentations are positive. If $(\SS, \RR)$ is a positive presentation, it makes sense to consider the monoid presented by~$(\SS, \RR)$, which will be denoted by~$\MON\SS\RR$. In the context of positive presentations, the definition of the group~$\GR\SS\RR$ implies that two words~$\ww, \ww'$ of~$\WWW\SS$ represent the same element of~$\GR\SS\RR$ if and only if one can go from~$\ww$ to~$\ww'$ by a finite sequence of transformations of the following types:

\ITEM1 Remove some pair~$\ss\inv \ss$ or~$\ss \ss\inv $, where $\ss$ is a letter of~$\SS$;

\ITEM2 Replace some factor~$\vv$ by~$\vv'$, where $\vv = \vv'$ is a relation of~$\RR$;

\ITEM3 Insert some pair~$\ss\inv \ss$ or~$\ss \ss\inv $, where $\ss$ is a letter of~$\SS$.

\noindent The question we will investigate is the extent to which transformations of the third type (inserting trivial factors~$\ss\inv \ss$ or~$\ss \ss\inv $) can be avoided. If we consider arbitrary words~$\ww, \ww'$, we cannot expect a positive answer, as removing type~\ITEM3 transformations yields a relation that is not symmetric. However, the question makes sense when $\ww'$ is the empty word~$\ew$:

\begin{ques}
\label{Q:Naive}
Assume that $(\SS, \RR)$ is a positive presentation and $\ww$ is a word of~$\WWW\SS$ that represents~$1$ in~$\GR\SS\RR$. Can one transform~$\ww$ into the empty word by using transformations of type~\ITEM1 and~\ITEM2 only?
\end{ques}

The answer to Question~\ref{Q:Naive} is positive when $\RR$ is empty, that is, in the case of a free group, but the question is not well posed in general. For instance, if $\tta \ttb = \ttb \tta$ belongs to~$\RR$, the word $\tta \ttb\inv \tta\inv \ttb$ represents~$1$, but it is eligible for no transformation of type~\ITEM1 and~\ITEM2. This is due to a lack of symmetry in type~\ITEM2 transformations, and it can be fixed by extending the definition so as to also allow replacing~$\vv\inv $ by~$\vv'{}\inv $ when $\vv = \vv'$ is a relation of~$\RR$. However, even so, the answer to Question~\ref{Q:Naive} remains negative in simple cases. For instance, assume $\SS = \{\tta, \ttb, \ttc\}$ and $\RR = \{\tta \ttb = \ttb \tta, \ttb \ttc = \ttc \ttb, \tta \ttc = \ttc \tta\}$. Let $\ww = \tta \ttB \ttc \ttA \ttb \ttC$, where we adopt the convention that, in concrete examples, $\ttA, \ttB$, ... stand for~$\tta\inv, \ttb\inv$, ... \cite{Eps}. Then $\ww$ represents~$1$ in~$\GR\SS\RR$, 
%as shows for instance the derivation
%\begin{multline*}
%\tta \ttB \ttc \ttA \ttb \ttC
%\goes{} \tta \ttB \ttA \tta \ttc \ttA \ttb \ttC
%\goes{} \tta \ttA \ttB \tta \ttc \ttA \ttb \ttC
%\goes{} \ttB \tta \ttc \ttA \ttb \ttC
%\goes{} \ttB \ttc \tta \ttA \ttb \ttC
%\goes{} \ttB \ttc \ttb \ttC
%\goes{} \ttB \ttb \ttc \ttC
%\goes{} \ttc \ttC
%\goes{} \ew,
%\end{multline*},
but it is eligible for no transformation of type~\ITEM1 or~\ITEM2.

Here comes the point. In addition to the above transformations, we consider a new type of syntactic transformations, namely the \emph{subword reversing} transformations of~\cite{Dgp, Dia}.

\begin{defi}
\label{D:Special}
For $(\SS, \RR)$ a positive presentation, we define \emph{special transformations} on words of~$\WWW\SS$ as follows:\\
- \emph{type $0$} (\resp. $\infty$): Remove (\resp. insert) some pair~$\ss\inv \ss$ or~$\ss \ss\inv $, where $\ss$ is a letter of~$\SS$,\\
- \emph{type $1$}: Replace some factor $\vv$ by~$\vv'$, or $\vv\inv $ by~$\vv'{}\inv $, where $\vv = \vv'$ is a relation of~$\RR$,\\
- \emph{type $2\smallr$} (``type $2$-right''): Replace some factor $\vv\inv \vv'$ with~$\uu \uu'{}\inv $, where $\vv, \vv'$ are nonempty and $\vv \uu = \vv' \uu'$ is a relation~of~$\RR$,\\
- \emph{type $2\smalll$} (``type $2$-left''): Replace some factor $\vv \vv'{}\inv $ by~$\uu\inv \uu'$, where $\vv, \vv'$ are nonempty and $\uu \vv = \uu' \vv'$ is a relation of~$\RR$.\\
For~$X \ince \{0,1, 2, \infty\}$, we write $\ww \goesR{\phantom1X\phantom1} \ww'$ if $\ww$ can be transformed to~$\ww'$ using special transformations of type belonging to~$X$.
\end{defi} 

\begin{figure}
\begin{picture}(110,24)(0,-2)
\put(0,19){type $1$:}
\psbezier[style=exist]{->}(0,8)(8,15)(22,15)(30,8)
\psbezier{->}(0,6)(8,-1)(22,-1)(30,6)
\put(15,-2){$\vv$}
\put(15,15){$\vv'$}
\put(40,19){type $2\smallr$:}
\psbezier{->}(40,8)(43,12)(50,14)(55,14)
\psbezier{->}(40,6)(43,2)(50,0)(55,0)
\psbezier[style=exist]{->}(56,14)(59,14)(67,12)(70,8)
\psbezier[style=exist]{->}(56,0)(59,0)(67,2)(70,6)
\put(44,-0.5){$\vv$}
\put(44,13.5){$\vv'$}
\put(66,13){$\uu'$}
\put(66,0){$\uu$}
\put(80,19){type $2\smalll$:}
\psbezier[style=exist]{->}(80,8)(83,12)(90,14)(95,14)
\psbezier[style=exist]{->}(80,6)(83,2)(90,0)(95,0)
\psbezier{->}(96,14)(99,14)(107,12)(110,8)
\psbezier{->}(96,0)(99,0)(107,2)(110,6)
\put(84,-0.5){$\uu$}
\put(84,13.5){$\uu'$}
\put(106,13){$\vv'$}
\put(106,0){$\vv$}
\end{picture}
\caption[]{\sf\smaller Special transformations viewed in the Cayley graph: relations correspond to oriented cells; special transformations of types $1$ and~$2$ correspond to changing way around a cell: in type~$1$, the old path (plain) follows one side of the cell whereas the new path (dotted) follows the other side; in type~$2\smallr$, the old path crosses the source of the cell, with at least one letter on both sides, and the new path crosses the target instead; type~$2\smalll$ is symmetric; note that a transformation of type~$2\smallr$ is not just the inverse of a transformation of type~$2\smalll$ as the definition requires that the initial words~$\vv, \vv'$ are not empty whereas it requires nothing for the final words~$\uu, \uu'$: for instance, if $\vv = \vv'$ is a relation of~$\RR$, then $\vv\inv \vv' \goesR{2\smallr} \ew$ holds, but $\ew \goesR{2\smalll} \vv\inv \vv'$ does not.}
\label{F:Special}
\end{figure}
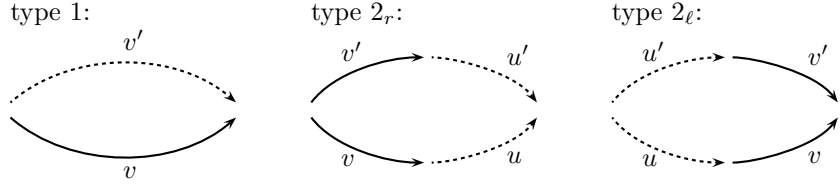

Special transformations are easily illustrated when words are viewed as paths in the Cayley diagram~\cite{Eps}, see Figure~\ref{F:Special}. With the above notation, the standard  result is that, for every positive presentation~$(\SS, \RR)$ and every word~$\ww$ of~$\WWW\SS$, we have
\begin{equation}
\label{E:Equiv}
\cl\ww = 1 \quad \Leftrightarrow \quad \ww \gozoiR \ew.
\end{equation}
Moreover, as is clear from Figure~\ref{F:Special}, every transformation of type~$2$ replaces a word with a new word that represents the same element of the involved group, so $\ww \gozotiR \ww'$ implies and, therefore, is equivalent to,~$\cl\ww  = \cl{\ww'}$. In particular, $\ww \gozotiR \ew$ is equivalent to~$\cl\ww = 1$. The question we will address is whether type~$\infty$ transformations can be avoided:

\begin{ques}
\label{Q:Final}
Assume that $(\SS, \RR)$ is a positive presentation and $\ww$ is a word of~$\WWW\SS$ that represents~$1$ in~$\GR\SS\RR$. Does $\ww \gozotR \ew$ necessarily hold?
\end{ques}

Introducing type~$2$ transformations really changes the situation. For instance, if $\SS$ is~$\{\tta, \ttb, \ttc\}$ and $\RR$ consists of $\tta \ttb = \ttb \tta$, $\ttb \ttc = \ttc \ttb$, and $\tta \ttc = \ttc \tta$, we observed above that, for $\ww = \tta \ttB \ttc \ttA \ttb \ttC$, although $\ww$ represents~$1$, the relation $\ww \gozoR \ew$ fails. Now, $\ww \gozotR \ew$ holds, since we can write
$\tta \ttB \underline{\ttc \ttA} \ttb \ttC
\goesR2 \tta \underline{\ttB \ttA} \ttc \ttb \ttC
\goesR1 \tta \ttA \ttB \ttc \ttb \ttC
\goes0 \ttB \underline{\ttc \ttb} \ttC
\goesR1 \ttB \ttb \ttc \ttC
\goes0 \ttc \ttC
\goes0 \ew$.

We are ready to introduce our main definition and state our conjecture.

\begin{defi}
We say that a positive presentation~$(\SS, \RR)$ satisfies Property~$\Prop$ if the relation $\ww \gozotR \ew$ holds for every word~$\ww$ of~$\WWW\SS$ that represents~$1$ in~$\GR\SS\RR$.
\end{defi}

\begin{conj}
\label{C:Main}
Every Artin--Tits presentation satisfies Property~$\Prop$.
\end{conj}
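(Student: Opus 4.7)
The plan is to attack Conjecture~\ref{C:Main} by induction on the cardinality~$\LG\SS$, building on the spherical case (Subsection~\ref{SS:Spherical}) and the type~FC case (Section~\ref{S:FC}). For $\LG\SS \le 2$ the presentation is of dihedral spherical type, hence already covered by the warm-up. For the inductive step, one assumes Property~$\Prop$ for every Artin--Tits presentation on fewer generators and tries to reduce any word~$\ww$ with~$\cl\ww = 1$ either to a word supported in a proper parabolic subset $\TT \subsetneq \SS$ (where the inductive hypothesis applies), or directly to~$\ew$.

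The first technical step is to put~$\ww$ into an alternating parabolic form $\ww_1 \pdots \ww_\kk$ using only type~$0$, $1$, and~$2$ moves, where each block~$\ww_\ii$ is a word over a proper parabolic subset~$\TT_\ii$, and adjacent blocks have distinct parabolic supports. The second step is a \emph{merging lemma}: if such an alternating form with $\kk \ge 1$ represents~$1$, then either some block already represents~$1$ (and is killed by induction) or two adjacent blocks~$\ww_\ii, \ww_{\ii+1}$ can be combined into a single word over $\TT_\ii \cap \TT_{\ii+1}$ by a $\gozotR$ manipulation, strictly reducing~$\kk$. Iterating these two reductions should yield~$\ew$.

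The merging lemma is where the main obstacle lies. For type~FC, as presumably exploited in Section~\ref{S:FC}, two crucial tools are available: the Artin--Tits group decomposes as an iterated amalgamated product over standard parabolic subgroups of spherical type, so that a Britton-style analysis locates the merging site; and Garside normal forms in each spherical parabolic submonoid allow one to convert the abstract merging into an explicit $\gozotR$ derivation. For general Artin--Tits types, and especially for affine types, neither tool is available: subword reversing need not terminate on~$\WWWp\SS$, common multiples of pairs of generators may fail to exist, and the intersection of two standard parabolic subgroups is not known to be parabolic. Any of these three pathologies can defeat the merging step.

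A second, more syntactic, line of attack, hinted at by Section~\ref{S:RAAG}, would bypass the Garside and amalgamation hypotheses and instead show directly that a suitably controlled reversing strategy on~$\ww$ always converges to~$\ew$ when~$\cl\ww = 1$. Establishing termination of such a strategy uniformly over all Artin--Tits presentations, without recourse to normal forms or to any prior knowledge that the word problem is decidable, is what I would expect to be the decisive difficulty, and is probably the reason Conjecture~\ref{C:Main} is left open in the paper.
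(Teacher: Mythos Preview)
Your assessment is correct in its conclusion: the statement is a \emph{conjecture}, and the paper does not prove it. There is no ``paper's own proof'' to compare against. The paper establishes Property~$\Prop$ only for the spherical case (Proposition~\ref{P:Spherical}) and, via the strengthened Property~$\Propp$ and an amalgamation argument, for all of type~FC (Proposition~\ref{P:FC}); the right-angled case receives a second, syntactic proof in Section~\ref{S:RAAG}. The general case is explicitly left open, and your diagnosis of the main obstacles---failure of reversing to terminate, absence of common multiples, and hence no Garside-style normal form to drive a merging step---matches the paper's own discussion in Subsection~\ref{SS:Further}.

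Two minor corrections to your sketch. First, for $\LG\SS \le 2$ the presentation need not be spherical (take $m(\ss,\tt) = \infty$, giving a free group), but such presentations are still of type~FC, so the base case is fine. Second, the intersection of two standard parabolic subgroups of an Artin--Tits group \emph{is} known to be the standard parabolic on the intersection of the generating sets; this is part of van der Lek's results~\cite{VdL}, and the paper invokes it implicitly when setting up the amalgam structure. So that particular item should be removed from your list of pathologies; the genuine obstructions are the two you name first, together with the absence of any Britton-type normal form beyond type~FC.

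Your second paragraph's ``alternating parabolic form plus merging lemma'' is essentially a paraphrase of the paper's actual strategy for type~FC (Lemmas~\ref{L:Pieces}--\ref{L:PiecesBis} and the proof of Proposition~\ref{P:Union}), so you have correctly reverse-engineered the argument. What you have not supplied---and what nobody has---is a replacement for the merging mechanism outside type~FC. That is indeed the gap, and it is the conjecture itself.
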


Before going on, let us immediately note that some positive presentations do \emph{not} satisfy Property~$\Prop$.
For instance, let $\SS$ be $\{\tta, \ttb, \ttc, \ttd, \tte, \ttf\}$ and let $\RR$ consist of the four relations $\ttc \tta = \tta \ttc \tte$, $\ttc \ttb = \ttb \ttc \tte$, $\ttd \tta = \tta \ttd \ttf$, and $\ttd \ttb = \ttb \ttd \ttf$. Let $\ww = \ttA \ttC \ttd \tta \ttB \ttD \ttc \ttb$. Then, as shown in Figure~\ref{F:Counter}, the word~$\ww$ represents~$1$ in the group~$\GR\SS\RR$, but $\ww \gozotR \ew$ cannot be true, as $\ww$ is eligible for no transformation of type~$0$, $1$, or~$2$. Hence the positive presentation~$(\SS, \RR)$ does not satisfy Property~$\Prop$.

\begin{figure}[htb]
\begin{picture}(60,24)(0,-1)
\pcline{<-}(1,10)(14,10)
\taput{$\tte$}
\pcline{<-}(16,10)(29,10)
\taput{$\ttc$}
\pcline{->}(31,10)(44,10)
\taput{$\ttd$}
\pcline{->}(46,10)(59,10)
\taput{$\ttf$}
\pcline{->}(30,19)(30,11)
\trput{$\tta$}
\pcline{<-}(30,9)(30,1)
\trput{$\ttb$}
\pcline{<-}(16,20)(29,20)
\taput{$\ttc$}
\psbezier{<-}(0,11)(0,20)(10,20)(14,20)
\put(4,20){$\tta$}
\pcline{<-}(16,0)(29,0)
\tbput{$\ttc$}
\psbezier{<-}(0,9)(0,0)(10,0)(14,0)
\put(4,-2){$\ttb$}
\pcline{->}(31,20)(44,20)
\taput{$\ttd$}
\psbezier{->}(46,20)(50,20)(60,20)(60,11)
\put(56,20){$\tta$}
\pcline{->}(31,0)(44,0)
\tbput{$\ttd$}
\psbezier{->}(46,0)(50,0)(60,0)(60,9)
\put(56,-2){$\ttb$}
\end{picture}
\caption[]{\sf\smaller Whenever $\ttc \tta = \tta \ttc \tte$, $\ttc \ttb = \ttb \ttc \tte$, $\ttd \tta = \tta \ttd \ttf$, and $\ttd \ttb = \ttb \ttd \ttf$ are valid relations, the word $\ww = \ttA \ttC \ttd \tta \ttB \ttD \ttc \ttb$ represents~$1$. However it is eligible for no special transformation, hence cannot satisfy $\ww \gozotR \ew$.}
\label{F:Counter}
\end{figure}

%%%%

\subsection{Connection with the Dehn algorithm and hyperbolicity}
\label{SS:Dehn}

Property~$\Prop$ is reminiscent of the property that underlies the Dehn algorithm, and, in this way, it is connected with hyperbolicity. Here, we make this connection precise. The result is stated in Proposition~\ref{P:Connection} below, and it shows that Property~$\Prop$ can be viewed as a weak hyperbolicity condition. In this way, Conjecture~\ref{C:Main} asserts that Artin--Tits groups, which cannot to be hyperbolic in general as they include a free Abelian group of rank two, nevertheless satisfy some weak variant of hyperbolicity.

Hereafter, we denote by~$\LG\ww$ the length (number of letters) of a word~$\ww$. Let us first recall the standard definition of the Dehn algorithm, here in the context of a positive presentation. 
 
\begin{defi} 
\label{D:Dehn}
Assume that $(\SS, \RR)$ a positive group presentation. For~$\ww, \ww'$ in~$\WWW\SS$, we say that $\ww'$ is obtained from~$\ww$ by a \emph{Dehn transformation with respect to~$\RR$} if $\ww'$ is obtained by replacing some factor~$\uu$ of~$\ww$ with a new factor~$\uu'$ such that $\LG{\uu} > \LG{\uu'}$ holds and there exists a relation~$\vv = \vv'$ of~$\RR$ such that $\uu\inv \uu'$ is a cyclic permutation of~$\vv\inv \vv'$ or of $\vv'{}\inv \vv$. We say that $\ww \gozDR \ww'$ holds if $\ww'$ is obtained from~$\ww$ by a finite sequence of type~$0$ transformations and Dehn transformations with respect to~$\RR$. 
\end{defi}

Clearly, $\ww \gozDR \ww'$ implies $\ww \eqR \ww'$ and, in particular, $\ww \gozDR \ew$ always implies $\ww \eqR \ew$. Saying that the latter implication is an equivalence means that the Dehn algorithm is valid for the presentation~$(\SS, \RR)$. It is well known \cite{GHV,Can,Lys} that, in this case, and provided the presentation~$(\SS, \RR)$ is finite, the group~$\GR\SS\RR$ is hyperbolic in the sense of Gromov.

It turns out that, at least in the case of positive presentations that satisfy simple length constraints, the relations~$\gozotR$ and~$\gozDR$ are closely connected.

\begin{prop}
\label{P:Connection}
Assume that $(\SS, \RR)$ is a positive presentation such that, for every relation~$\vv = \vv'$ of~$\RR$, we have $\big\vert\LG\vv - \LG{\vv'}\big\vert \le 2$. Then, for all~$\ww, \ww'$ in~$\WWW\SS$, the relation $\ww \gozDR \ww'$ implies~$\ww \gozotR \ww'$.
\end{prop}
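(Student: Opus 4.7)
Since $\gozDR$ is generated by type-$0$ transformations and Dehn transformations, and $\gozotR$ already contains type-$0$, it suffices to show that each Dehn move---which replaces a factor~$\uu$ of~$\ww$ by a shorter~$\uu'$ with $\uu\inv\uu'$ a cyclic permutation of~$\vv\inv\vv'$ (or~$\vv'{}\inv\vv$) for some relation $\vv = \vv'$ of~$\RR$---can be locally simulated on~$\uu$ by a sequence of type-$0$, $1$, and~$2$ special transformations. Set $p = \LG\vv$, $q = \LG{\vv'}$, and $k = \LG\uu$, so that the Dehn inequality reads $k > (p+q)/2$. Every cyclic rotation of~$\vv\inv\vv'$ takes one of the two shapes (A)~$\vv_1\inv \vv' \vv_2\inv$ with $\vv = \vv_1\vv_2$, or (B)~$\vv'_2 \vv\inv \vv'_1$ with $\vv' = \vv'_1\vv'_2$, each with possibly empty parts. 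I treat~(A); shape~(B) and the rotations of~$\vv'{}\inv\vv$ are entirely symmetric.

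\textbf{Active cases.} Suppose first that the split lies inside the middle arc~$\vv'$ of the~(A)-form. Writing $\vv' = \vv'_L \vv'_R$ with $\LG{\vv'_L} = k - \LG{\vv_1}$, one computes $\uu = \vv'_L\inv \vv_1$ and $\uu' = \vv'_R \vv_2\inv$. When $\vv_1 \ne \ew$, the word~$\uu$ has both a nonempty inverse part and a nonempty positive part, so a single type-$2\smallr$ step---based on the decomposition $\vv'_L \vv'_R = \vv_1 \vv_2$ of the given relation---rewrites $\uu$ directly to $\vv'_R \vv_2\inv = \uu'$. Suppose instead that the split lies inside the right arc~$\vv_2\inv$, and write $\vv_2 = \vv_{2L} \vv_{2R}$ with $\LG{\vv_{2R}} = k - \LG{\vv_1} - q$; then $\uu = \vv_{2R} \vv'{}\inv \vv_1$ and $\uu' = \vv_{2L}\inv$. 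One type-$1$ step replaces the middle factor~$\vv'{}\inv$ by $\vv\inv = \vv_{2R}\inv \vv_{2L}\inv \vv_1\inv$ (using $\vv = \vv_1 \vv_{2L} \vv_{2R}$), producing $\vv_{2R} \vv_{2R}\inv \vv_{2L}\inv \vv_1\inv \vv_1$; from here, $\LG{\vv_{2R}} + \LG{\vv_1}$ successive type-$0$ cancellations remove the adjacent trivial pairs $\vv_{2R} \vv_{2R}\inv$ and $\vv_1\inv \vv_1$, leaving $\vv_{2L}\inv = \uu'$.

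\textbf{Degenerate cases and main obstacle.} The remaining sub-cases are exactly those in which $\uu$ has no mixed structure: either the split lies within the left arc~$\vv_1\inv$ (so $\uu$ would be a purely positive suffix of~$\vv_1$) or it lies within~$\vv'$ with $\vv_1 = \ew$ (so $\uu$ would be a purely inverse prefix of~$\vv'{}\inv$). In each such case $\uu$ admits no nontrivial type-$2$ factor, and the only available local simulation is a direct type-$1$ replacement of a full side of the relation. This is precisely where the hypothesis $\big\vert\LG\vv - \LG{\vv'}\big\vert \le 2$ does its work: combined with the Dehn bound $k > (p+q)/2$ and with the relevant arc bound $k \le p$ (resp.~$k \le q$), it forces $k = \max(p,q)$, so the split coincides with a natural boundary of the cyclic word and the Dehn move collapses to $\uu = \vv$, $\uu' = \vv'$ (or the symmetric form), a plain type-$1$ step. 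Verifying this collapse carefully in every degenerate sub-case is the main technical content; once it is in place, splicing the local simulations together along any $\gozDR$-derivation immediately produces the desired $\gozotR$-derivation.
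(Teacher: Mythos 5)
Your proof is correct, and its two pillars are the same as the paper's: reduce to a single Dehn move performed on the factor~$\uu$ itself, and invoke the hypothesis $\big\vert\LG\vv-\LG{\vv'}\big\vert\le2$ only in the pure-sign cases, where it forces $\uu=\vv$, $\uu'=\vv'$ (or the inverse form) and hence a plain type-$1$ step. What differs is the decomposition. The paper splits on the sign content of~$\uu$ and disposes of the mixed case in one line ($\uu$ contains a factor $\ss\inv\tt$ or $\ss\tt\inv$, hence type~$2$ yields~$\uu'$); you split on the position of the cut in the cyclic word, which separates that mixed case into two genuinely different situations: the cut inside~$\vv'$ with $\vv_1\ne\ew$, settled by one type-$2\smallr$ step exactly as in the paper, and the wrap-around case $\uu=\vv_{2R}\vv'{}\inv\vv_1$, $\uu'=\vv_{2L}\inv$, which you simulate by a type-$1$ replacement of the full factor~$\vv'{}\inv$ followed by type-$0$ cancellations. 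That extra care is a real gain: in the wrap-around configuration (e.g. $\uu=\ttA\ttb\tta\ttb\ttA$, $\uu'=\ttb$ for the relation $\tta\ttb\tta=\ttb\tta\ttb$) no single type-$2$ step produces~$\uu'$, so the paper's appeal to type-$2$ eligibility is terser than your explicit simulation precisely there. Two small points to tidy, neither a gap: when $\vv_1=\ew$ the degenerate word~$\uu$ is the inverse of a prefix of~$\vv'$ (a suffix of~$\vv'{}\inv$, not a prefix); and the announced collapse is a two-line check worth writing out---from $k\le p$ (resp. $k\le q$), $k>(p+q)/2$ and $q\ge p-2$ (resp. $p\ge q-2$) one gets $k=p=\max(p,q)$ (resp. $k=q$), whence $\vv_2=\ew$ and $\uu=\vv$, $\uu'=\vv'$ (resp. $\uu=\vv'{}\inv$, $\uu'=\vv\inv$), a type-$1$ step---which is exactly how the paper uses the length hypothesis in its purely positive case.
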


\begin{proof} 
It is enough to prove that $\ww \gozotR \ww'$ holds when $\ww'$ is obtained from~$\ww$ by one Dehn transformation. As $\ww \gozotR \ww'$ implies $\ww_1 \ww \ww_2 \gozotR \ww_1 \ww' \ww_2$ for all~$\ww_1, \ww_2$, it is even sufficient to prove $\uu \gozotR \uu'$ when there exists a relation~$\vv = \vv'$ of~$\RR$ such that $\LG\uu > \LG{\uu'}$ holds and $\uu\inv \uu'$ is a cyclic permutation of~$\vv'{}\inv \vv$. The case when $\uu\inv \uu'$ is a cyclic permutation of~$\vv\inv \vv'$ would of course be symmetric. 

We consider the possible repartitions of the positive and negative letters in~$\uu$ and~$\uu'$. If $\uu$ contains at most one positive and one negative letter, it must contain a factor~$\ss\inv \tt$ or a factor~$\ss \tt\inv$, in which cases it is eligible for a type~$2$ special transformation, and we directly obtain $\uu \gotR \uu'$. Otherwise, assume that $\uu$ contains only positive letters, that is, it is a factor of~$\vv$. If $\uu$ is all of~$\vv$, then $\uu'$ is~$\vv'$, and we obtain $\uu \gooR \uu'$. Otherwise, we must have $\LG\uu < \LG\vv$, whence $\LG\uu \le \LG\vv - 1$, and $\LG{\uu'} \le \LG\vv - 2$. By hypothesis, we have $\LG\uu + \LG{\uu'} = \LG\vv + \LG{\vv'}$, and we deduce $\LG\vv + \LG{\vv'} \le 2\LG\vv -3$, whence $\LG{\vv'} \le \LG\vv-3$, which is incompatible with the hypothesis on~$(\SS, \RR)$. Finally, the case when $\uu$ contains only negative letters, hence is a factor of~$\vv'{}\inv$, is symmetric.
\end{proof}

Thus, at least for presentations satisfying the length constraint of Proposition~\ref{P:Connection}, the relation~$\gozot$ is a weak version of~$\gozD$ and, therefore, Property~$\Prop$ is a weak version of hyperbolicity. Artin--Tits presentations are eligible for Proposition~\ref{P:Connection} since all Artin--Tits relations are of the form $\vv = \vv'$ with $\LG\vv = \LG{\vv'}$. As, except in the degenerate case of free groups, Artin--Tits groups are not hyperbolic, Conjecture~\ref{C:Main} is a weaker statement in the same direction.

\begin{rema}
As it involves the equivalence relation~$\eqR$, Property~$\Prop$ is connected with the word problem of the considered presentation. However, Property~$\Prop$ need \emph{not} imply the decidability of the word problem. Indeed, starting from a word~$\ww$, there exist in general infinitely many words~$\ww'$ satisfying $\ww \gozotR \ww'$. For instance, in the case of $4$-strand braids, that is, of the Artin--Tits presentation of type~$A_3$---which satisfies Property~$\Prop$ as will be established in Proposition~\ref{P:Spherical} below---there exist infinitely many words~$\ww'$ satisfying $\sigma_2\inv \sigma_1\inv \sigma_3 \sigma_2 \gozotR \ww'$~\cite{Dhg}. In such a case, there is a priori no method for establishing the failure of $\ww \gozotR \ew$, and therefore, when Property~$\Prop$ is satisfied, for establishing the failure of $\ww \eqR \ew$.  
\end{rema}

To complete the discussion of the connection between Property~$\Prop$ and the Dehn algorithm, we now briefly mention the approach of Appel and Schupp~\cite{ApS} in  extra-large type, and explain why it does not seem to naturally lead to a proof of Property~$\Prop$.

An Artin-Tits presentation~$(\SS,\RR)$ is said to be of \emph{extra-large type} if all relations of~$\RR$ have the form~$\vv = \vv'$ with $\LG\vv = \LG{\vv'} \ge 4$. Appel and Schupp \cite{ApS} prove that, if $(\SS,\RR)$ is an Artin-Tits presentation of extra-large type and, if $\RRh$ is the set of all relations~$\uu = \uu'$ where~$\uu, \uu'$ are $\RR$-equivalent words of~$\WWW\SS$ involving only two letters of~$\SS$ and such that $\uu\inv \uu'$ is cyclically reduced, then a word~$\ww$ of~$\WWW\SS$ represents~$1$ in~$\GR\SS\RR$ if and only if $\ww \gozDRh \ew$ holds (here we do not assume that the words~$\uu, \uu'$ are positive, so the presentation~$(\SS, \RRh)$ is not positive in general). In other words, the Dehn algorithm with respect to the family~$\RRh$ solves the word problem, a result that does not contradict the non-hyperbolicity of the group~$\GR\SS\RR$ as $\RRh$ is always infinite. Owing to Proposition~\ref{P:Connection}, it is natural to wonder whether the implication
\begin{equation}
\label{E:ExtraLarge}
\ww \gozDRh \ww' \quad \Rightarrow \quad \ww \gozotR \ww'
\end{equation}
is always true. A direct proof of this implication would provide a proof of Conjecture~\ref{C:Main}, at least in the extra-large type: if $\ww$ represents~$1$, then $\ww \gozDRh \ew$ holds, and we would deduce $\ww \gozotR \ew$. However, \eqref{E:ExtraLarge} need not be true, even in very simple cases, as the following counter-example shows.

\begin{exam}
\label{X:Conterex}
Consider $\SS = \{\tta, \ttb\}$ and let $\RR$ consist of the unique relation $\tta \ttb \tta \ttb = \ttb \tta \ttb \tta$. Then $(\SS, \RR)$ is an Artin--Tits presentation of extra-large type, and the associated Coxeter group is the eight element dihedral group. Let $\ww = \ttA \ttb^2 \tta \ttb \tta \ttB$ and $\ww' = \tta \ttb \tta \ttb \ttA$. Then $\ww$ and~$\ww'$ are $\RR$-equivalent, we have $\LG\ww = 7 > \LG{\ww'} = 5$. So $\ww \gozDRh \ww'$ holds. We claim that $\ww \gozotR \ww'$ fails. The argument is as follows. We consider the finite fragment~$\Gamma$ of the Cayley graph of~$\GR\SS\RR$ represented in plain lines in Figure~\ref{F:Cayley}: the vertices are the nine left-divisors of the element~$\tta \ttb \tta \ttb^2$ in the monoid~$\MON\SS\RR$. Then the word~$\ww$ is, in the sense of~\cite{Dfo} or~\cite[Chapter~IV]{Dhr}, traced from~$\tta$ in~$\Gamma$, that is, there exists inside~$\Gamma$ a path starting from the vertex labeled~$\tta$ whose edges wear as labels the successive letters of~$\ww$. As explained in the cited references, the family of all words traced from a given vertex in the fragment of the Cayley graph made of all left-divisors of a given element is closed under special transformations of types~$0$, $1$, and~$2$. Now the picture shows that $\ww'$ is \emph{not} traced from~$\tta$ in~$\Gamma$ and, therefore, $\ww \gozotR \ww'$ cannot hold.
\end{exam}

\begin{figure}[htb]
$$\begin{picture}(60,25)(0,0)
\pcline{->}(1,0)(14,0)
\taput{$\tta$}
\pcline{->}(16,0)(29,0)
\taput{$\ttb$}
\pcline{->}(31,0)(44,0)
\taput{$\tta$}
\pcline{->}(1,8)(14,8)
\taput{$\tta$}
\pcline{->}(16,8)(29,8)
\taput{$\ttb$}
\pcline{->}(31,8)(44,8)
\taput{$\tta$}
\pcline{->}(1,16)(14,16)
\tbput{$\tta$}
\pcline{->}(16,16)(29,16)
\tbput{$\ttb$}
\pcline{->}(31,16)(44,16)
\tbput{$\tta$}
\pcline{->}(0,15)(0,9)
\trput{$\ttb$}
\pcline{->}(0,7)(0,1)
\trput{$\ttb$}
\pcline{->}(45,15)(45,9)
\tlput{$\ttb$}
\pcline{->}(45,7)(45,1)
\tlput{$\ttb$}
\pcline[style=exist]{->}(16,17)(29,24)
\tbput{$\tta$}
\pcline[style=exist]{->}(31,24)(44,24)
\tbput{$\ttb$}
\pcline[style=exist]{->}(46,24)(59,24)
\tbput{$\tta$}
\pcline[style=exist]{->}(60,23)(60,17)
\tlput{$\ttb$}
\pcline[style=exist]{->}(46,8)(59,16)
\taput{$\tta$}
\psline[linewidth=2pt,linecolor=gray]{c->}(13,18)(-2,18)(-2,-2)(47,-2)(47,7)
\psline[linewidth=2pt,linecolor=gray, style=etc]{c->}(16,19)(29,26)(62,26)(62,16)(49,8)
\pscircle(0,16){0.5}
\pscircle(45,0){0.5}
\put(-4,18){$1$}
\put(-6,8){$\ww$}
\put(64,20){$\ww'$}
\end{picture}$$
\caption{\sf\smaller Proving that $\ww \rightsquigarrow \ww'$ fails in Example~\ref{X:Conterex}: the word~$\ww$ is traced from the vertex~$\tta$ in the fragment of the Cayley graph written in plain lines, and so is every word obtained from~$\ww$ using special transformations, whereas $\ww'$ is not traced from the vertex~$\tta$ in that fragment.}
\label{F:Cayley}
\end{figure}
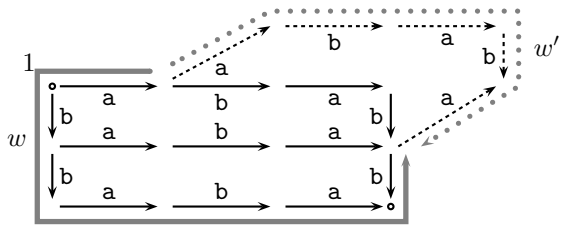

%%%%

\subsection{The case of spherical type}
\label{SS:Spherical}

Artin--Tits presentations of spherical type (also known as of finite type), that is, those such that the associated Coxeter group is finite~\cite{Hum}, provide natural test-cases: the corresponding groups are well understood and any conjecture involving general Artin--Tits groups can be expected to be provable for spherical type. This is the case for Property~$\Prop$.

\begin{prop}
\label{P:Spherical}
Every Artin--Tits presentation of spherical type satisfies Property~$\Prop$.
\end{prop}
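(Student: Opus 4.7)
The plan is to normalize $\ww$ to a positive--negative form $\pp \qq\inv$ via subword reversing (type $2\smallr$), then use the Garside structure of the spherical-type Artin--Tits monoid to finish. First, I decompose $\ww$ as an alternation of maximal positive and negative blocks, and apply type $2\smallr$ transformations iteratively at each negative--positive boundary. A single reversing step replaces a factor $\ss\inv \tt$ (with $\ss, \tt \in \SS$) by $\uu \uu'{}\inv$, where $\ss \uu = \tt \uu'$ is the defining relation of $\RR$ linking $\ss$ and $\tt$---such a relation exists because spherical type means every Coxeter exponent $m_{\ss,\tt}$ is finite. In a Garside monoid, iterated reversing of a word $\vv\inv \vv'$ with $\vv, \vv'$ positive is known to terminate and to produce a word $\uu' \uu\inv$ such that $\vv \uu' = \vv' \uu$ is the right least common multiple; this is a cornerstone of Garside theory (Brieskorn--Saito, Deligne). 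Applying this at each block boundary strictly reduces the number of maximal blocks of $\ww$, so after finitely many rounds $\ww$ is rewritten as $\pp \qq\inv$ with $\pp, \qq$ in $\WWWp\SS$.

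Next, since type $2$ transformations preserve the represented group element, $\pp \qq\inv$ also represents $1$ in $\GR\SS\RR$, whence $\cl\pp = \cl\qq$. By the classical embedding theorem for spherical-type Artin--Tits monoids into their groups (Brieskorn--Saito, Deligne), $\pp$ and $\qq$ represent the same element of $\MON\SS\RR$. By the definition of a monoid presentation, there is then a finite sequence of applications of relations of $\RR$ inside positive words converting $\pp$ into $\qq$; each such application is by definition a type $1$ transformation. Applying this sequence inside $\pp \qq\inv$ yields $\qq \qq\inv$, which collapses to $\ew$ by iterating type $0$ transformations from the middle outward.

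The main obstacle is the first step: showing that iterated subword reversing terminates on an arbitrary word, given that a single reversing step can lengthen the current word. This is not obvious from scratch but is a well-known feature of Garside monoids; one bounds all intermediate positive factors by noting that they divide a fixed common right multiple, which exists and is finite because the Coxeter group is. Everything else is routine: the monoid embedding is classical in the spherical case, and translating monoid equivalence into a sequence of positive relation applications is immediate from the definition of $\MON\SS\RR$.
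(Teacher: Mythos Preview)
Your argument is correct and follows essentially the same strategy as the paper---use subword reversing, backed by the Garside structure of the spherical-type monoid, to bring~$\ww$ to a positive--negative fraction and then finish---but the endgame differs. You perform one pass of right-reversing to reach some $\pp\qq\inv$, invoke the classical embedding $\MON\SS\RR \hookrightarrow \GR\SS\RR$ to conclude $\pp \eqpR \qq$, apply type~$1$ moves to turn~$\pp$ into~$\qq$, and collapse~$\qq\qq\inv$ with type~$0$. The paper instead performs a two-pass reversing (first left, then right, via Lemma~\ref{L:Rev}), which lands directly on a \emph{reduced} fraction: the resulting positive words represent the right-coprime numerator and denominator of~$\cl\ww$, hence both represent~$1$ in the monoid and are therefore empty. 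This yields the slightly stronger conclusion $\ww \goztR \ew$ with no type~$1$ step, a strengthening the paper later notes (Remark~\ref{R:CounterArtin}) fails outside spherical type. Your one-pass route is more direct but trades the coprimality argument for an explicit appeal to the embedding theorem. One small omission: in your reversing phase you also need type~$0$ moves to dispose of factors~$\ss\inv\ss$, since no Artin--Tits relation has both sides beginning with the same generator.
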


In order to establish Proposition~\ref{P:Spherical} and to prepare the arguments of Subsection~\ref{SS:SphericalPP}, we recall some terminology about divisibility relations. If $\MM$ is a monoid and $\ff, \gg$ belong to~$\MM$, one says that $\ff$ is a \emph{left-divisor} of~$\gg$, and that $\gg$ is a \emph{right-multiple} of~$\ff$, if $\ff \gg' = \gg$ holds for some element~$\gg'$ of~$\MM$. If $\MM$ has no nontrivial (that is, distinct of~$1$) invertible element---as is the case of every monoid~$\MON\SS\RR$ defined by a positive presentation---the left-divisibility relation is a partial ordering on~$\MM$. Right-divisors are defined symmetrically: $\ff$ is a \emph{right-divisor} of~$\gg$, and $\gg$ is a \emph{left-multiple} of~$\ff$, if $\gg = \gg' \ff$ holds for some~$\gg'$. Finally, two elements of~$\MM$ are called \emph{left-coprime} (\resp.\ \emph{right-coprime}) if they admit no nontrivial common left-divisor (\resp.\ right-divisor). 

The specificity of the spherical type is the following classical result describing~$\GR\SS\RR$ as a group of (right)-fractions.

\begin{prop}\cite{BrS, Eps, Tat, Chb}
\label{P:Fraction}
Assume that $(\SS, \RR)$ is an Artin--Tits presentation of spherical type. Then the monoid~$\MON\SS\RR$ embeds in the group~$\GR\SS\RR$ and, for every element~$\gg$ of~$\GR\SS\RR$, there exists a unique pair $(\gg_1, \gg_2)$ of right-coprime elements of~$\MON\SS\RR$ satisfying $\gg = \gg_1 \gg_2\inv $. Moreover, if we have $\gg = \ff_1 \ff_2\inv$ with $\ff_1, \ff_2$ in~$\MON\SS\RR$, there exists~$\hh$ in~$\MON\SS\RR$ such that $\ff_\ie = \gg_\ie \hh$ holds for~$\ie = 1,2$.
\end{prop}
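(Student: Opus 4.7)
The proposition bundles four classical facts about spherical-type Artin--Tits monoids, all deriving from the Garside structure of $\MON\SS\RR$ established by Brieskorn--Saito~\cite{BrS}. I would first recall, as known input, three structural facts: (a) $\MON\SS\RR$ is cancellative and has no non-trivial invertible element; (b) every element of $\MON\SS\RR$ left-divides some power of the Garside element~$\Delta$ (the lift of the longest element of the finite Coxeter group associated with~$(\SS,\RR)$), and, consequently, any two elements of $\MON\SS\RR$ admit a common right-multiple; (c) left-divisibility on $\MON\SS\RR$ is a lattice, so any two elements admit a right-GCD.

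For the embedding and the fraction form, I would invoke Ore's theorem: cancellativity together with the common right-multiple property allow one to form the group of right-fractions $\MM\inv\MM$, into which $\MON\SS\RR$ embeds, and every element of which is $\ff_1 \ff_2\inv$ for some $\ff_1, \ff_2 \in \MON\SS\RR$. A universal-property comparison identifies $\MM\inv\MM$ with $\GR\SS\RR$, since both are groups generated by (the image of)~$\SS$ subject to exactly the relations of~$\RR$ (those of~$\RR$ together with the free-group relations $\ss\inv\ss = \ss\ss\inv = 1$). This covers the first two sentences of the statement.

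For existence and uniqueness of the right-coprime decomposition, given $\gg = \ff_1 \ff_2\inv$, let $\hh$ be the right-GCD of $\ff_1, \ff_2$ from~(c) and write $\ff_1 = \gg_1 \hh$, $\ff_2 = \gg_2 \hh$; then $\gg = \gg_1 \gg_2\inv$, and maximality of~$\hh$ combined with (a) forces $(\gg_1, \gg_2)$ to be right-coprime. For uniqueness, suppose $\gg_1 \gg_2\inv = \gg'_1 \gg'_2\inv$ with both pairs right-coprime. The standard Ore-lemma description of equal right-fractions yields $\uu, \vv \in \MON\SS\RR$ with $\gg_1 \uu = \gg'_1 \vv$ and $\gg_2 \uu = \gg'_2 \vv$. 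I claim right-coprimality of $(\gg_1, \gg_2)$ forces $\uu$ to be the right-GCD of $\gg_1 \uu$ and $\gg_2 \uu$: any common right-divisor~$D$ is a right-multiple of~$\uu$ (since $\uu$ itself is one, and $D$ is the maximal one), so $D = D' \uu$; cancelling~$\uu$ on the right shows that $D'$ right-divides both $\gg_1$ and~$\gg_2$, hence $D' = 1$ and $D = \uu$. By the symmetric argument $\vv$ is the right-GCD of $\gg'_1 \vv$ and $\gg'_2 \vv$; since the two pairs coincide we get $\uu = \vv$, whence $\gg_1 = \gg'_1$ and $\gg_2 = \gg'_2$ by cancellation. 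The final ``moreover'' clause now follows immediately: given an arbitrary factorisation $\gg = \ff_1 \ff_2\inv$, applying the existence construction produces a right-coprime pair which by uniqueness equals $(\gg_1, \gg_2)$, so the element~$\hh$ used in that construction---the right-GCD of $\ff_1, \ff_2$---does satisfy $\ff_\ie = \gg_\ie \hh$ for $\ie = 1, 2$.

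The main non-trivial input is the Ore condition~(b), whose standard proof relies on the subword-reversing / complement technique of~\cite{BrS} combined with the finiteness of the associated Coxeter group; once (a)-(c) are granted, everything downstream is formal manipulation in a cancellative monoid with right-GCDs, and no additional obstacle arises.
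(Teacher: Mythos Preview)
The paper does not supply its own proof of this proposition; it is stated with the citations to Brieskorn--Saito, Epstein et al., Tatsuoka, and Charney and then used as a black box. Your sketch is the standard Garside-theoretic derivation (Ore's theorem from cancellativity plus common right-multiples, then right-GCDs to extract the coprime fraction) and is correct in substance.

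One wording issue in the uniqueness step: you write ``any common right-divisor~$D$ is a right-multiple of~$\uu$'', which is false for an arbitrary common right-divisor. Your parenthetical ``and $D$ is the maximal one'' shows that you actually mean to take~$D$ to be the right-GCD of~$\gg_1\uu$ and~$\gg_2\uu$; with that reading, $\uu$ right-divides~$D$ because $\uu$ is itself a common right-divisor, and your cancellation argument then correctly gives $D = \uu$. Once that quantifier is fixed the proof is complete and matches the argument in the cited sources.
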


In the above context, the elements~$\gg_1$ and~$\gg_2$ above are called the \emph{right-numerator} and the \emph{right-denomin\-ator} of~$\gg$, here denoted by~$\NR\gg$ and~$\DR\gg$. In terms of words, there exists a simple connection with special transformations.

\begin{lemm}\cite{Dff}
\label{L:Rev}
Assume that $(\SS, \RR)$ is a Artin--Tits presentation of spherical type and $\ww$ is a word of~$\WWW\SS$. Let $\gg_1, \gg_2$ be the right numerator and denominator of~$\cl\ww$.

\ITEM1 There exist~$\ww_1, \ww_2$ in~$\WWWp\SS$ satisfying $\ww \goztR \ww_1 \ww_2\inv$ and $\cl{\ww_\ie} = \gg_\ie$ for $\ie = 1,2$.

\ITEM2 For all~$\ww'_1, \ww'_2$ in~$\WWWp\SS$ satisfying $\cl{\ww'_\ie} = \gg_\ie$ for $\ie = 1, 2$, we have $\ww \gozotR \ww'_1 \ww'_2{}\inv$.
\end{lemm}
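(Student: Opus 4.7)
The plan is to reduce both parts to the right-subword-reversing procedure available in Artin--Tits monoids of spherical type, which are Garside monoids by Brieskorn--Saito and Deligne. This reversing machinery, developed in the reference~\cite{Dff}, is precisely designed to bridge word-theoretic manipulations in~$\WWW\SS$ and the fractional description of $\GR\SS\RR$ provided by Proposition~\ref{P:Fraction}.

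For part~\ITEM1, I would apply right-reversing to~$\ww$: iteratively rewrite every occurrence of a negative letter immediately to the left of a positive letter. Concretely, whenever the current word contains a factor $\ss\inv \tt$ with $\ss, \tt \in \SS$, one distinguishes two cases. If $\ss = \tt$, the factor $\ss\inv \ss$ is deleted, a type-$0$ transformation. If $\ss \ne \tt$, one invokes the Artin--Tits relation $\ss \uu = \tt \uu'$ of alternating form (of length~$\mm_{\ss\tt}$, which is finite since the Coxeter group is finite) and replaces $\ss\inv \tt$ by $\uu \uu'{}\inv$, which is a type-$2\smallr$ transformation. The classical reversing theorem \cite{Dff} guarantees that this process terminates---this is the key Garside-theoretic input, resting on the existence of common right-multiples in the monoid---producing a word of the form $\ww_1 \ww_2\inv$ with $\ww_1, \ww_2 \in \WWWp\SS$. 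Moreover, the same reference shows that the resulting elements $\cl{\ww_1}, \cl{\ww_2}$ of~$\MON\SS\RR$ are right-coprime and satisfy $\cl{\ww_1}\cl{\ww_2}\inv = \cl\ww$ in~$\GR\SS\RR$. The uniqueness of the right-coprime fraction in Proposition~\ref{P:Fraction} then forces $\cl{\ww_\ie} = \gg_\ie$ for $\ie = 1, 2$.

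For part~\ITEM2, let $\ww'_1, \ww'_2$ be any positive words with $\cl{\ww'_\ie} = \gg_\ie$. By part~\ITEM1, we have $\ww \goztR \ww_1 \ww_2\inv$ with $\cl{\ww_\ie} = \gg_\ie$. Since the monoid $\MON\SS\RR$ embeds in the group (Proposition~\ref{P:Fraction}), the equality $\cl{\ww_\ie} = \cl{\ww'_\ie}$ already holds in the monoid, so $\ww_\ie$ and $\ww'_\ie$ are connected by a finite chain of replacements of a factor~$\vv$ by~$\vv'$, with $\vv = \vv'$ a relation of~$\RR$. Such replacements are precisely type-$1$ special transformations. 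Applying the chain from $\ww_1$ to $\ww'_1$ on the positive half, and the mirrored chain on the negative half---which transforms $\ww_2\inv$ to $\ww'_2{}\inv$ via the variant of type-$1$ that replaces $\vv\inv$ by $\vv'{}\inv$ admitted in Definition~\ref{D:Special}---yields $\ww_1 \ww_2\inv \gooR \ww'_1 \ww'_2{}\inv$. Concatenating with part~\ITEM1 gives $\ww \gozotR \ww'_1 \ww'_2{}\inv$, as required.

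The only real obstacle is the termination and correctness of right-reversing in spherical type, but this is exactly what~\cite{Dff} provides, and is by now a well-established part of Garside theory; no additional combinatorial work is required beyond citing it. Note in particular that the argument does \emph{not} need type-$\infty$ transformations anywhere, which is what will let Property~$\Prop$ be deduced in the spherical case by specializing to $\ww'_1 = \ww'_2 = \ew$ when~$\cl\ww = 1$.
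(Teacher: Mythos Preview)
Your argument for part~\ITEM2 matches the paper's. The gap is in part~\ITEM1: right-reversing an arbitrary signed word does terminate at some positive--negative word $\ww_1 \ww_2\inv$, but it does \emph{not} in general produce right-coprime $\cl{\ww_1}, \cl{\ww_2}$. Take any spherical Artin--Tits presentation and $\ww = \ss\ss\inv$: this word contains no factor of the form $\tt\inv\tt'$, so your procedure halts immediately with $\ww_1 = \ww_2 = \ss$, whereas $\gg_1 = \gg_2 = 1$. More generally, whenever $\ww$ already has the shape $\uu\vv\inv$ with $\uu, \vv$ positive but not right-coprime, right-reversing does nothing at all. The right-coprimeness conclusion in~\cite{Dff} applies only when the input is already of the form $\vv_1\inv\vv_2$ with $\vv_1, \vv_2$ positive, because then the reversing output computes the right-lcm of $\cl{\vv_1}$ and~$\cl{\vv_2}$, and the complementary factors are automatically right-coprime.

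The paper repairs exactly this point by a two-step process: first left-reverse~$\ww$ (using type~$0$ and type~$2\smalll$ transformations) to reach a negative--positive word $\vv_1\inv\vv_2$, and only then right-reverse that to obtain $\ww_1\ww_2\inv$. The second step now genuinely computes the right-lcm of $\cl{\vv_1}$ and $\cl{\vv_2}$, which forces $\cl{\ww_1}$ and $\cl{\ww_2}$ to be right-coprime, and Proposition~\ref{P:Fraction} finishes as you intended. With this correction your overall plan goes through, and your part~\ITEM2 is unaffected.
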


\begin{proof}[Proof (sketch)]
\ITEM1 It follows from the fact that any two elements in the monoid~$\MON\SS\RR$ admit a common left-multiple that, starting from~$\ww$ and performing type~$0$ and~$2_\ell$ special transformations, one arrives in finitely many steps to a word of the form~$\vv_1\inv \vv_2$ with $\vv_1, \vv_2$ in~$\WWWp\SS$. By a symmetric argument, starting from~$\vv_1\inv \vv_2$ and performing type~$0$ and~$2_r$ special transformations, one arrives in finitely many steps to a word of the form~$\ww_1 \ww_2\inv$ with $\ww_1, \ww_2$ in~$\WWWp\SS$. So $\ww \goztR \ww_1 \ww_2\inv$ holds. Moreover, the element of~$\MON\SS\RR$ represented by~$\vv_1 \ww_1$ and~$\vv_2 \ww_2$ is the least common right-multiple of~$\cl{\vv_1}$ and~$\cl{\vv_2}$, which implies that $\cl{\ww_1}$ and $\cl{\ww_2}$ are right-coprime. The uniqueness result in Proposition~\ref{P:Fraction} then implies that $\ww_\ie$ represents~$\gg_\ie$ for $\ie = 1, 2$.

\ITEM2 Let $\ww'_\ie$ be any word of~$\WWWp\SS$ representing~$\gg_\ie$. The uniqueness result of Proposition~\ref{P:Fraction} implies that $\ww'_\ie \eqpR \ww_\ie$, where $\ww_1, \ww_2$ are the words obtained in~\ITEM1. Then we have $\ww_1 \ww_2\inv \gooR \ww'_1 \ww'_2{}\inv$. By composing, we deduce $\ww \gozotR \ww'_1 \ww'_2{}\inv$.
\end{proof}

Due to the symmetry of Artin--Tits relations, Proposition~\ref{P:Fraction} and Lemma~\ref{L:Rev} have left counterparts involving left-fractions of the form~$\gg_1\inv \gg_2$ and negative--positive words~$\ww_1\inv \ww_2$. We shall then speak of the left-numerator and the left-denominator of~$\gg$, denoted by~$\DL\gg$ and~$\NL\gg$. The latter are left-coprime elements of~$\MON\SS\RR$.

We can now easily complete the argument.

\begin{proof}[Proof of Proposition~\ref{P:Spherical}]
Assume that $\ww$ is a word of~$\WWW\SS$ that represents~$1$ in~$\GR\SS\RR$. Then the left numerator and denominator of~$\cl\ww$ are~$1$. By Lemma~\ref{L:Rev}, there exist~$\ww_1, \ww_2$ in~$\WWWp\SS$, both representing~$1$ in~$\MON\SS\RR$, and satisfying $\ww \goztR \ww_1 \ww_2\inv$. The only possibility for a word of~$\WWWp\SS$ to represent~$1$ is to be empty. So we must have $\ww \goztR \ew$ and, a fortiori, $\ww \gozotR \ew$. So Property~$\Prop$ is satisfied.
\end{proof}

\begin{rema}
\label{R:CounterArtin}
The result that $\ww \eqR \ew$ implies $\ww \goesR{0,2} \ew$ is false for general Artin--Tits presentations. For instance, consider the (right-angled) presentation~$(\SS, \RR)$ with $\SS = \{\tta, \ttb, \ttc, \ttd\}$ and $\RR = \{
\tta \ttc = \ttc \tta, \ttb \ttc = \ttc \ttb, \tta \ttd = \ttd \tta, \ttb \ttd = \ttd \ttb\}$.
The associated group is a direct product of two free groups of rank~$2$, respectively generated by~$\tta, \ttb$ and $\ttc, \ttd$. Let $\ww = \ttA \ttC \ttd \tta \ttB \ttD \ttc \ttb$. Then $\ww \gozotR \ew$ is true, as we can for instance write
\begin{multline*}
\hspace{10mm}\ttA \ttC \ttd \tta \ttB \ttD \ttc \ttb
\goesR1 \ttC \ttA \ttd \tta \ttB \ttD \ttc \ttb
\goesR1 \ttC \ttA \tta \ttd \ttB \ttD \ttc \ttb\\
\goes0 \ttC \ttd \ttB \ttD \ttc \ttb
\goesR1 \ttC \ttd \ttD \ttB \ttc \ttb
\goes0 \ttC \ttB \ttc \ttb
\goesR1 \ttC \ttB \ttb \ttc 
\goes0 \ttC \ttc
\goes0 \ew,\hspace{10mm}
\end{multline*}
hence $\ww$ represents~$1$ in~$\GR\SS\RR$. On the other hand, $\ww \goesR{0,2} \ew$ is false, as $\ww$ is eligible for no transformation of type~$0$ or~$2$. 
\end{rema}

%%%%

\subsection{An application}
\label{SS:Embed}

We conclude the section with an application of Property~$\Prop$ to the embeddability of the monoid~$\MON\SS\RR$ in the group~$\GR\SS\RR$. This application provides an additional motivation for investigating Property~$\Prop$, especially in the case of Artin--Tits presentations. As the proof appears in~\cite{Dia} (in a slightly different setting), we shall only state the result here. 

To this end, we first need to introduce a condition called completeness. We recall that $\WWWp\SS$ denotes the family of all words in~Ê$\WWW\SS$ that contain no letter~$\ss\inv$. Then, by definition, the monoid~$\MON\SS\RR$ (isomorphic to) $\WWWp\SS \quot \eqpR$, where $\eqpR$ is the least congruence on~$\WWWp\SS$ that includes~$\RR$. For $\ww, \ww'$ in~$\WWWp\SS$, by definition, $\ww \eqpR \ww'$ is equivalent to $\ww \gooR \ww'$ and, from there, to $\ww\inv \ww' \gozoR \ew$. Completeness is a formally similar condition that involves special transformations of type~$2\smallr$ instead of type~$1$. 

\begin{defi} \cite{Dgp, Dia}
\label{D:Completeness}
Assume that $(\SS, \RR)$ is a positive presentation. We say that $(\SS, \RR)$ is \emph{complete with respect to right-reversing} if, for all~$\ww, \ww'$ in~$\WWWp\SS$, the relation $\ww \eqpR \ww'$ is equivalent to $\ww\inv \ww' \goesR{0,2\smallr} \ew$.
\end{defi}

That $\ww\inv \ww' \goesR{0,2\smallr} \ew$ implies $\ww \eqpR \ww'$ is always true, so the non-trivial part is the converse implication, namely the condition that types~$0$ and~$2\smallr$ transformations are sufficient to detect all pairs of $\eqpR$-equivalent words. The embeddability result we wish to mention is as follows.

\begin{prop}\cite[Proposition~3.29]{Dia}
\label{P:Embed}
Assume that $(\SS, \RR)$ is a positive presentation that is complete with respect to right-reversing, contains no relation of the form $\ss \vv = \ss \vv'$ with $\vv \not= \vv'$, and satisfies Property~$\Prop$. Then the monoid~$\MON\SS\RR$ embeds in the group~$\GR\SS\RR$.
\end{prop}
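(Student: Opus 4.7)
The plan is to show that the canonical homomorphism $\iota \colon \MON\SS\RR \to \GR\SS\RR$ is injective---equivalently, that for positive words $\uu, \vv$ in $\WWWp\SS$, the relation $\uu \eqR \vv$ implies $\uu \eqpR \vv$.

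I would apply the three hypotheses in succession. First, by completeness (Definition~\ref{D:Completeness}), the desired conclusion $\uu \eqpR \vv$ is equivalent to the existence of a derivation $\uu\inv \vv \goesR{0, 2\smallr} \ew$. Second, since $\uu\inv \vv$ represents~$1$ in~$\GR\SS\RR$, Property~$\Prop$ supplies a derivation $\uu\inv \vv \gozotR \ew$. The crux of the argument is therefore to convert any $\gozotR$-derivation starting from $\uu\inv \vv$---which may use type~$1$ and type~$2\smalll$ moves---into a $\goesR{0, 2\smallr}$-derivation from the same word.

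For this conversion, the no-$\ss \vv = \ss \vv'$ hypothesis plays a dual role. On the one hand, it makes right-reversing \emph{deterministic}: on any $\ss\inv \tt$ factor, at most one type~$2\smallr$ (or type~$0$) move applies, so the procedure computes a uniquely defined outcome. On the other hand, combined with completeness, it forces $\MON\SS\RR$ to be left-cancellative: from $\ss \ww \eqpR \ss \ww'$ one obtains $\ww\inv \ss\inv \ss \ww' \goesR{0, 2\smallr} \ew$ by completeness, and the absence of relations of the form $\ss \ww_1 = \ss \ww_2$ with $\ww_1 \ne \ww_2$ means that the $\ss\inv \ss$ factor can only be eliminated by a type~$0$ cancellation, whence $\ww \eqpR \ww'$ by completeness again.

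Equipped with these tools, I would run deterministic right-reversing on $\uu\inv \vv$, reaching a terminal positive-negative word $\ww_1 \ww_2\inv$ satisfying the standard reversing invariant $\uu \ww_1 \eqpR \vv \ww_2$ in $\MON\SS\RR$. Since $\cl\uu = \cl\vv$, cancellation in the group yields $\cl{\ww_1} = \cl{\ww_2}$. The argument then closes by showing $\ww_1 = \ww_2 = \ew$: one invokes Property~$\Prop$ again on the smaller instance $\ww_2\inv \ww_1$ and iterates a well-founded descent, until completeness returns $\uu \eqpR \vv$. The main obstacle is setting up this descent: a naive induction on $\LG\uu + \LG\vv$ is not available, as right-reversing may generate longer intermediate words; a plausible remedy is to use the minimal length of the $\gozotR$-derivation granted by Property~$\Prop$ as the induction parameter, so that each round of deterministic right-reversing strictly reduces it. Verifying this monotonicity---the point at which all three hypotheses must interact most tightly---is the subtlest part of the proof.
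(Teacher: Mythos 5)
Your plan defers the entire difficulty to steps that are either unjustified or incorrect. First, the hypothesis that $\RR$ contains no relation $\ss\vv = \ss\vv'$ does \emph{not} make right-reversing deterministic: it only excludes relations whose two sides begin with the same letter, while a general positive presentation may still contain two distinct relations $\ss\uu = \tt\uu'$ and $\ss\ww = \tt\ww'$ for the same pair $\ss \not= \tt$, so ``the'' outcome of reversing is not defined (this is special to Artin--Tits presentations, not to the class covered by the statement). Second, and more seriously, nothing in the hypotheses guarantees that right-reversing of $\uu\inv\vv$ terminates at a positive--negative word $\ww_1\ww_2\inv$: completeness only asserts which negative--positive words reverse to~$\ew$, it says nothing about termination, and reversing may be stuck on a factor $\ss\inv\tt$ for which no relation is available, or continue indefinitely. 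So the terminal word on which the whole second half of your argument rests need not exist, and you never indicate how Property~$\Prop$ would produce it.

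Third, the closing step is precisely the non-trivial content of the proposition and it is missing. Even granting a terminal $\ww_1\ww_2\inv$ with $\uu\ww_1 \eqpR \vv\ww_2$ and $\cl{\ww_1} = \cl{\ww_2}$, you need $\ww_1$ and $\ww_2$ to be empty (knowing merely $\ww_1 \eqpR \ww_2$ would still require right-cancellativity, which, as the paper stresses, is exactly what completeness and left-cancellativity cannot give and what Property~$\Prop$ must secretly supply); deducing this from $\cl{\ww_1} = \cl{\ww_2}$ is essentially the injectivity you are trying to prove, so the proposed descent risks circularity. Moreover the well-foundedness claim---that each round of reversing strictly decreases the minimal length of a $\gozotR$-derivation to~$\ew$---is asserted, not proved, and there is no evident reason it should hold; you acknowledge this yourself. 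For comparison, the paper does not reprove the result: it cites \cite{Dia} and indicates a different mechanism, namely converting a derivation witnessing $\ww \eqR \ww'$ for positive words into a ``fan'' of words of $\WWWp\SS$ pairwise connected under $\eqpR$; it is in building that fan, not in iterated reversing of a single quotient word, that Property~$\Prop$ does its work. As it stands, your sketch establishes only the easy left-cancellativity remark already made in the paper, and leaves the actual argument open.
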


The (non-trivial) proof consists in showing that, if two words~$\ww, \ww'$ of~$\WWWp\SS$ satisfy $\ww \eqR \ww'$, then one can go from~$\ww$ to~$\ww'$ using a certain ``fan'' of words of~$\WWWp\SS$ that are connected under~$\eqpR$. One then deduces that $\ww \eqR \ww'$ implies $\ww \eqpR \ww'$, which means that $\MON\SS\RR$ embeds in~$\GR\SS\RR$.

Note that, if $(\SS, \RR)$ is eligible for Proposition~\ref{P:Embed}, then the monoid~$\MON\SS\RR$ must be left- and right-cancellative, that is, $\ff \gg = \ff \gg'$ and $\gg \hh = \gg' \hh$ both imply $\gg = \gg'$. It is easy to see that completeness plus the hypothesis on relations $\ss \vv = \ss \vv'$ implies left-cancellativity. But these conditions alone say nothing about right-cancellativity. This shows that, in such a context, Property~$\Prop$ includes a hidden right-cancellativity condition.

For our current purpose, the point is that every Artin--Tits presentation is complete with respect to right-reversing \cite{Dgp}, a result  that in essence goes back to Garside~\cite{Gar} in the case of classical braids, that is, for Artin--Tits presentations of type~A, and to Brieskorn and Saito~\cite{BrS} in the general case. It follows that, if Artin--Tits presentations satisfy Property~$\Prop$, they are eligible for Proposition~\ref{P:Embed}. L.\,Paris~\cite{Par} proved that, if $(\SS, \RR)$ is an Artin--Tits presentation, then $\MON\SS\RR$ always embeds in~$\GR\SS\RR$. The beautiful but indirect argument of~\cite{Par} uses certain linear representations extending the Lawrence--Krammer representation of braids~\cite{Krb}, and a more direct proof remains to be discovered. Together with Proposition~\ref{P:Embed}, a proof of Conjecture~\ref{C:Main} would arguably provide such a natural proof.

%%%%%%%%

\section{Artin--Tits groups of type~FC}
\label{S:FC}

The family of Artin--Tits groups of type~FC is the smallest class of groups that contains all Artin--Tits groups of spherical type and is closed under forming an amalgamated product over a standard parabolic subgroup. The aim of this section is to prove

\begin{prop}
\label{P:FC}
Every Artin--Tits presentation of type~FC satisfies Property~$\Prop$.
\end{prop}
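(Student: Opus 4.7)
The plan is to proceed by induction on the construction of type~FC presentations as iterated amalgamated products over standard parabolic subgroups, with Proposition~\ref{P:Spherical} providing the base case of spherical type. The inductive step reduces to the following preservation lemma: if $(\SS_1, \RR_1)$ and $(\SS_2, \RR_2)$ are Artin--Tits presentations satisfying Property~$\Prop$ and $\TT = \SS_1 \cap \SS_2$ generates a standard parabolic over which the groups are amalgamated, then the union presentation $(\SS_1 \cup \SS_2, \RR_1 \cup \RR_2)$ also satisfies Property~$\Prop$.

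To prove this preservation lemma, consider a word~$\ww$ of~$\WWW{\SS_1 \cup \SS_2}$ with $\cl\ww = 1$ in the amalgamated group. Decompose $\ww = \ww_1 \cdots \ww_k$ as a concatenation of maximal syllables, each lying in some~$\WWW{\SS_{j_i}}$ with $j_i \neq j_{i+1}$, and induct on the syllable count~$k$. When $k = 1$, the word lies entirely in one factor, and since standard parabolic subgroups of type~FC Artin--Tits groups embed, $\cl\ww = 1$ holds already in~$\GR{\SS_{j_1}}{\RR_{j_1}}$; Property~$\Prop$ for that factor gives $\ww \gozotR \ew$. When $k \ge 2$, the Normal Form Theorem for amalgamated products forces some syllable~$\ww_i$ to represent an element of the common subgroup $\GR\TT{\RR_\TT}$. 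The key auxiliary claim is that such a~$\ww_i$ can be rewritten, via special transformations of types~$0$, $1$, $2$ inside $(\SS_{j_i}, \RR_{j_i})$, into a word of~$\WWW\TT$; once achieved, the~$\ww_i$ becomes a $\TT$-syllable that can be merged with~$\ww_{i-1}$ or~$\ww_{i+1}$, strictly decreasing the syllable count, and induction closes the argument.

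The principal obstacle is precisely this parabolic-detection claim: if $\vv$ is a word of~$\WWW{\SS_i}$ satisfying $\cl\vv \in \GR\TT{\RR_\TT}$, then $\vv \gozotR \vv^*$ must hold for some $\vv^*$ of~$\WWW\TT$. In the spherical instance of the claim, the natural route is to first reduce~$\vv$ to a positive--negative form $\vv_1 \vv_2\inv$ using (a direct adaptation of) Lemma~\ref{L:Rev}\ITEM1, then to invoke the classical fact that a positive element of~$\MON{\SS_i}{\RR_i}$ lying in the parabolic subgroup $\GR\TT{\RR_\TT}$ already lies in the parabolic submonoid~$\MON\TT{\RR_\TT}$, and finally to apply Lemma~\ref{L:Rev}\ITEM2 to replace~$\vv_1, \vv_2$ by representatives drawn from~$\WWWp\TT$. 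Propagating this argument through the amalgamation tower---ensuring that every rewriting step performed inside a factor remains a legitimate type~$0$, $1$, or~$2$ transformation in the enlarged presentation, and handling the fact that the factors involved in an intermediate amalgamation need not themselves be spherical---is the delicate technical part of the proof, and is where the FC hypothesis (which guarantees that all ``glued'' standard parabolics are spherical) is essential.
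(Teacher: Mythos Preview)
Your inductive scheme matches the paper's, and you have correctly isolated the crux, but there is a genuine gap. The parabolic-detection claim---that a word~$\vv$ in~$\WWW{\SS_i}$ with $\cl\vv \in \SG{\TT}$ can be $\gozotR$-rewritten into~$\WWW\TT$---is argued in your sketch only for \emph{spherical} factors via Lemma~\ref{L:Rev}. But your own induction immediately requires it for the non-spherical FC factors produced at earlier stages of the tower; declaring this ``the delicate technical part'' is where the proof must begin, not end. Your parenthetical that FC ``guarantees that all glued standard parabolics are spherical'' is not the definition used here (closure under amalgamation over \emph{arbitrary} standard parabolics), so the non-spherical case cannot be sidestepped. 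In fact the paper remarks explicitly, just after Lemma~\ref{L:Amalg}, that carrying the induction on Property~$\Prop$ alone ``seems difficult, even probably false in general.''

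The paper's remedy is to strengthen the induction hypothesis to a property~$\Propp$ (Definition~\ref{D:Propp}) that packages Property~$\Prop$, your parabolic-detection claim (recorded as~\eqref{E:Subgroup}), and a further ingredient you are missing: for every $\SS_0 \ince \SS$ a fixed $\SG{\SS_0}$-transversal~$\TTT(\SS_0)$, together with the statement that any word can be $\gozotR$-split as~$\vv\uu$ with $\cl\vv \in \TTT(\SS_0)$ and $\uu \in \WWW{\SS_0}$. This transversal data is what lets one compute the amalgam normal form via special transformations (Lemma~\ref{L:Pieces}) and is exactly what pushes the inductive step (Proposition~\ref{P:Union}) through without any sphericity assumption on the factors. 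Your syllable argument reduces the syllable count only when some syllable already lands in~$\GG_{1,2}$; reaching the normal form in general, and hence establishing parabolic detection for the union, requires peeling off the $\GG_{1,2}$-tail of each syllable, which is precisely the transversal condition.
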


This result shows that Property~$\Prop$ is satisfied for a large number of Artin--Tits presentations. 

The section is organized as follows. In Subsection~\ref{SS:Propp}, we introduce a strengthening~$\Propp$ of Property~$\Prop$ that is more suitable for amalgamated products. Then we show that all Artin--Tits presentations of spherical type satisfy Property~$\Propp$ in Subsection~\ref{SS:SphericalPP}. Finally, we develop in Subsection~\ref{SS:Induction} the inductive argument needed to establish Proposition~\ref{P:FC}. 

The general scheme and the techniques used in this section follow those of Altobelli~\cite{Alt}. However, the argument has to be revisited completely because we are considering special transformations and the relation~$\gozotR$ instead of~$\eqR$. Also, in Subsection~\ref{SS:Induction}, we do not restrict to Artin--Tits groups and propose a new, hopefully optimized construction.

%%%%

\subsection{Property~$\Propp$}
\label{SS:Propp}

The definition of the family of Artin--Tits presentations of type~FC as the closure of the family of Artin--Tits presentations of spherical type under a certain type of amalgamated product naturally suggests an inductive proof for Proposition~\ref{P:FC}. However, we first need to modify Property~$\Prop$ to make the induction possible.

By definition of standard parabolic subgroups \cite{VdL}, the amalgamated products involved in the construction of type~FC groups correspond, in terms of presentations, to a union.

\begin{lemm}
\label{L:Amalg}
Say that two presentations $(\SS_1, \RR_1)$ and $(\SS_2, \RR_2)$ are in \emph{amalgam position} if, denoting by~$\GG'_\ie$ the subgroup of~$\GR{\SS_\ie}{\RR_\ie}$ generated by~$\SS_1 \cap \SS_2$, the identity map of~$\SS_1 \cap \SS_2$ induces an isomorphism of~$\GG'_1$ onto~$\GG'_2$. Then $(\SS_1 \cup \SS_2, \RR_1 \cup \RR_2)$ is a presentation of the amalgamated product $\GR{\SS_1}{\RR_1} *_{\GG'_1 = \GG'_2} \GR{\SS_2}{\RR_2}$.
\end{lemm}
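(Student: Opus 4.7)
The plan is to unfold the universal property of the amalgamated product and match it against the presentation $(\SS_1 \cup \SS_2, \RR_1 \cup \RR_2)$ by constructing mutually inverse homomorphisms. Write $\GG_\ie := \GR{\SS_\ie}{\RR_\ie}$, let $\PPP := \GR{\SS_1 \cup \SS_2}{\RR_1 \cup \RR_2}$, and let $\AA$ denote the amalgamated product $\GG_1 *_{\GG'_1 = \GG'_2} \GG_2$, the identification of $\GG'_1$ and $\GG'_2$ being made via the isomorphism supplied by the amalgam position hypothesis.

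First, I would construct a homomorphism $\phi \colon \PPP \to \AA$. There are canonical maps $\GG_\ie \to \AA$ coming with the amalgamated product; composing with the inclusions $\SS_\ie \hookrightarrow \GG_\ie$ suggests defining $\phi$ on generators by sending each $\ss \in \SS_\ie$ to its image in $\AA$. For $\ss \in \SS_1 \cap \SS_2$ this is well defined because, by the amalgam position hypothesis, the element~$\ss$ of $\GG'_1 \subseteq \GG_1$ is identified in $\AA$ with the element~$\ss$ of $\GG'_2 \subseteq \GG_2$. Each relation of $\RR_\ie$ already holds in $\GG_\ie$, hence in $\AA$, so $\phi$ is a well-defined homomorphism. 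Conversely, the inclusion $\SS_\ie \subseteq \SS_1 \cup \SS_2$ induces homomorphisms $\psi_\ie \colon \GG_\ie \to \PPP$ because every relation of~$\RR_\ie$ is a defining relation of~$\PPP$. Their restrictions $\psi_1|_{\GG'_1}$ and $\psi_2|_{\GG'_2}$ agree when identified via the amalgamating isomorphism (both send each $\ss \in \SS_1 \cap \SS_2$ to the letter~$\ss$ of~$\PPP$), so by the universal property of the amalgamated product $\psi_1$ and $\psi_2$ glue to a homomorphism $\psi \colon \AA \to \PPP$.

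Finally, one checks that $\phi \circ \psi$ and $\psi \circ \phi$ both act as the identity on a generating set and are therefore the identity. I do not expect a real obstacle: the argument is essentially a diagram chase and the amalgam position hypothesis is used in exactly one place, namely to make the very definition of~$\AA$ (and hence the gluing in the construction of~$\psi$) legitimate. Without that hypothesis the group $\PPP$ is still perfectly well defined, but the two subgroups of~$\PPP$ generated by the images of $\SS_\ie$ could fail to be copies of $\GG_\ie$ amalgamated along a common subgroup in the sense required by the amalgamated product construction.
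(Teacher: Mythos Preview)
Your argument is correct and is the standard universal-property proof of this well-known fact. The paper itself does not supply a proof of this lemma at all: it is stated as a standard result and the text moves on immediately to the remark (citing van der Lek) that two Artin--Tits presentations are in amalgam position precisely when their relations on $\SS_1 \cap \SS_2$ coincide. So there is nothing to compare; your write-up simply fills in what the authors took for granted.
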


It directly follows from the results of van der Lek~\cite{VdL} that two Artin--Tits presentations $(\SS_1, \RR_1)$, $(\SS_2, \RR_2)$ are in amalgam position if and only if the relations of~$\RR_1$ and of~$\RR_2$ that involve generators in~$\SS_1 \cap \SS_2$ coincide. Thus, owing to Proposition~\ref{P:Spherical}, which says that all Artin--Tits presentations of spherical type satisfy Property~$\Prop$, the natural approach for proving Proposition~\ref{P:FC} should be to establish that the union of two Artin--Tits presentations that are in amalgam position and satisfy Property~$\Prop$ again satisfies Property~$\Prop$. This however seems difficult, even probably false in general. What we shall do below is to introduce some strengthening~$\Propp$ of~$\Prop$ and apply the expected induction scheme to Property~$\Propp$ rather than to Property~$\Prop$.

Hereafter, if $(\SS, \RR)$ is a positive presentation and $\SS'$ is a subset of~$\SS$, we denote by~$\SG{\SS'}$ the subgroup of~$\GR\SS\RR$ generated by~$\SS'$. Similarly, we denote by~$\SGp{\SS'}$ the submonoid of~$\MON\SS\RR$ generated by~$\SS'$. If $\GG'$ is a subgroup of~$\GR\SS\RR$, a \emph{(left)-$\GG'$-transversal} is a subset of~$\GR\SS\RR$ that contains one element exactly in each left-$\GG'$-coset, and that contains~$1$---the latter condition is not always required, but here we explicitly include it in our definition. Below, we shall consider families of transversals with respect to various subgroups: we define an \emph{$\SS$-sequence of transversals} in~$\GR\SS\RR$ to be a sequence~$\TTT = (\TTT(\SS'))_{\SS' \ince \SS}$ such that $\TTT(\SS')$ is $\SG{\SS'}$-transversal in~$\GR\SS\RR$ for every~$\SS'$ included in~$\SS$.

\begin{defi}
\label{D:Propp}
Assume that $(\SS, \RR)$ is a positive presentation and $\TTT$ is an $\SS$-sequence of transversals in~$\GR\SS\RR$. We say that $(\SS, \RR)$ satisfies \emph{Property~$\Propp$ with respect to~$\TTT$} if, for all~$\SS', \SS_0$ included in~$\SS$ and~$\ww$ in~$\WWW\SS$, the following relation $\PPP_{\TTT}(\SS', \SS_0, \ww)$ is satisfied:
\begin{equation}
\cl\ww \in \SG{\SS'} \quad \mbox{implies} \quad \exists \vv \in \WWW{\SS'} \ \exists \uu \in \WWW{\SS' \cap \SS_0} \ (\ww \gozotR \vv \uu \mbox{\ and \ } \cl\vv \in \TTT(\SS_0)).
\end{equation}
\end{defi}

Roughly speaking, $\PPP_{\TTT}(\SS', \SS_0, \ww)$ says that, starting with a word~$\ww$ and using special transformations, we can isolate a representative of the (unique) element of the distinguished transversal~$\TTT(\SS_0)$ that lies in the left-$\SG{\SS_0}$-coset of~$\cl\ww$, and, if $\cl\ww$ lies in~$\SG{\SS'}$, we can moreover require that the final words lie in~$\WWW{\SS'}$. 

\begin{lemm}
\label{L:Implies}
Property~$\Propp$ implies Property~$\Prop$.
\end{lemm}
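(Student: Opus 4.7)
The plan is to derive Property $\Prop$ from Property $\Propp$ by specializing the statement $\PPP_{\TTT}(\SS', \SS_0, \ww)$ to the degenerate choice $\SS' = \SS_0 = \emptyset$. Concretely, assume $(\SS, \RR)$ satisfies Property $\Propp$ with respect to some $\SS$-sequence of transversals $\TTT$, and let $\ww \in \WWW\SS$ be a word representing $1$ in $\GR\SS\RR$.

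First I would observe that $\SG{\emptyset}$ is the trivial subgroup $\{1\}$ of $\GR\SS\RR$, so the premise $\cl\ww \in \SG{\SS'}$ with $\SS' = \emptyset$ is equivalent to $\cl\ww = 1$, which is exactly the hypothesis provided by Property $\Prop$. Second, $\WWW{\emptyset}$ consists solely of the empty word $\ew$, since there are no letters available; hence any $\vv \in \WWW{\SS'}$ and $\uu \in \WWW{\SS' \cap \SS_0}$ supplied by the conclusion of $\PPP_{\TTT}(\emptyset, \emptyset, \ww)$ are forced to be the empty word. Third, the side condition $\cl\vv \in \TTT(\emptyset)$ is automatically satisfied because our convention on transversals (stated just before Definition~\ref{D:Propp}) requires $1 \in \TTT(\SS_0)$ for every~$\SS_0$, and $\cl\vv = \cl\ew = 1$.

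Putting these three observations together, the conclusion of $\PPP_{\TTT}(\emptyset, \emptyset, \ww)$ collapses to $\ww \gozotR \ew \cdot \ew = \ew$, which is precisely what Property $\Prop$ demands. There is no genuine obstacle in this argument: all the work has been front-loaded into the formulation of Property $\Propp$, and the present lemma is nothing more than an instantiation of the universally quantified statement at its most trivial parameters.
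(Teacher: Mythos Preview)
Your proof is correct and follows essentially the same approach as the paper: specialize $\PPP_{\TTT}(\SS', \SS_0, \ww)$ with $\SS' = \emptyset$ and use that $\WWW{\emptyset} = \{\ew\}$ forces $\vv = \uu = \ew$. The only cosmetic difference is that the paper takes $\SS_0 = \SS$ rather than $\SS_0 = \emptyset$, but this is immaterial since $\SS' \cap \SS_0 = \emptyset$ either way; also, your third observation about the side condition $\cl\vv \in \TTT(\emptyset)$ is harmless but unnecessary, as that condition is part of the \emph{conclusion} of $\PPP_{\TTT}$ and need not be verified.
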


\begin{proof}
Assume that $(\SS, \RR)$ satisfies Property~$\Propp$ with respect to~$\TTT$, and that $\ww$ represents~$1$ in~$\GR\SS\RR$. Then $\cl\ww$ belongs to~$\SG{\emptyset}$. Hence $\PPP_{\TTT}(\emptyset, \SS, \ww)$ says that there exists~$\vv$ and~$\uu$ in~$\WWW{\emptyset}$ such that $\ww \gozotR \vv \uu$ holds. Now the only element of~$\WWW{\emptyset}$ is the empty word~$\ew$, so the only possibility is $\vv = \uu = \ew$, whence $\ww \gozotR \ew$. So $(\SS, \RR)$ satisfies Property~$\Prop$.
\end{proof}

The result we shall establish is then

\begin{prop}
\label{P:FCMain}
Every Artin--Tits presentation of type~FC satisfies Property~$\Propp$.
\end{prop}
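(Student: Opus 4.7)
The plan is to argue by induction on the inductive construction of type~FC Artin--Tits presentations: every such $(\SS, \RR)$ is either of spherical type, in which case Property~$\Propp$ will follow from the results of Subsection~\ref{SS:SphericalPP}, or arises as a union $(\SS_1 \cup \SS_2, \RR_1 \cup \RR_2)$ with $(\SS_1, \RR_1), (\SS_2, \RR_2)$ strictly smaller type~FC presentations in amalgam position. By Lemma~\ref{L:Amalg}, the group $\GR\SS\RR$ is then the amalgamated product $\GR{\SS_1}{\RR_1} *_{\SG{\SS_0}} \GR{\SS_2}{\RR_2}$, with $\SS_0 := \SS_1 \cap \SS_2$, and the induction hypothesis provides $\SS_i$-sequences of transversals~$\TTT_i$ witnessing Property~$\Propp$ for~$(\SS_i, \RR_i)$.

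The first task is to assemble from~$\TTT_1$ and~$\TTT_2$ an $\SS$-sequence of transversals~$\TTT$ for~$(\SS, \RR)$. Relying on the classical Schreier-style normal form theorem for amalgamated products, every element of~$\SG{\SS'}$, for $\SS' \subseteq \SS$, admits a unique reduced writing as $h_0 h_1 \pdots h_n$ where the~$h_k$ alternate between $\TTT_1(\SS_0) \setminus \{1\}$ and $\TTT_2(\SS_0) \setminus \{1\}$, terminated by an element of~$\SG{\SS_0 \cap \SS'}$. Selecting in each left-$\SG{\SS_0}$-coset the representative whose terminal factor is trivial defines~$\TTT(\SS_0)$, and a recursive variant of the same procedure handles~$\TTT(\SS')$ for arbitrary~$\SS' \subseteq \SS$. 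To make this assembly consistent across the two sides of the amalgam, we strengthen the induction hypothesis by requiring that $\TTT_1$ and $\TTT_2$ restrict to the same distinguished transversals on~$\SG{\SS_0}$.

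The core argument then establishes the relation $\PPP_\TTT(\SS', \SS_0, \ww)$ by processing~$\ww$ syllable-by-syllable. Decompose $\ww$ into maximal factors lying in $\WWW{\SS_1}$ or $\WWW{\SS_2}$, its \emph{syllables}. Scanning from right to left, apply to each syllable~$\ww^{(k)}$ the inductive instance of Property~$\Propp$ for the appropriate~$(\SS_j, \RR_j)$ with parameter~$\SS_0$: this yields via $\gozotR$-moves a decomposition $\ww^{(k)} \gozotR \vv^{(k)} \uu^{(k)}$ with $\cl{\vv^{(k)}} \in \TTT_j(\SS_0)$ and $\uu^{(k)} \in \WWW{\SS_0}$. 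Since $\SS_0 \subseteq \SS_1 \cap \SS_2$, the tail~$\uu^{(k)}$ may legitimately be concatenated to the preceding syllable, which lies in the opposite alphabet, and the next inductive step applies directly to that enlarged syllable. After finitely many passes the entire word has been converted, through $\gozotR$-transformations only, into a word $\vv_1 \vv_2 \pdots \vv_n \uu$ realizing the amalgamated normal form of~$\cl\ww$; the hypothesis $\cl\ww \in \SG{\SS'}$ together with uniqueness of the normal form then forces each $\vv_k$ and $\uu$ to lie in the prescribed sub-alphabets, giving the desired $\vv \in \WWW{\SS'}$ and $\uu \in \WWW{\SS' \cap \SS_0}$.

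The main obstacle is the \emph{syntactic} constraint that no type~$\infty$ insertion be used during the reduction. Each single-syllable reduction is supplied by the induction hypothesis and is by definition $\infty$-free, but one must verify that the tail-absorption step--where the $\WWW{\SS_0}$-tail produced by one syllable is fed into the next inductive call--does not covertly require inserting trivial factors~$\ss\ss\inv$. This is precisely why Property~$\Propp$, rather than Property~$\Prop$, is formulated with a right $\WWW{\SS' \cap \SS_0}$-tail built into its conclusion: the tail is passed cleanly between successive inductive calls without ever having to cancel against anything outside the current syllable. A secondary subtlety, inherited in spirit from Altobelli~\cite{Alt} but demanding a fresh treatment in our syntactic setting, is the compatibility between the transversals~$\TTT_1, \TTT_2$ and the global transversal~$\TTT$; this is what is ensured by the coherence condition on~$\SG{\SS_0}$ that we propagate through the induction.
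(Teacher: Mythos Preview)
Your overall architecture---induct on the FC construction, with the spherical case (Subsection~\ref{SS:SphericalPP}) as base and an amalgam/union lemma for the inductive step---is exactly the paper's. The gap is that your inductive step establishes strictly less than Property~$\Propp$ requires, because a notational collision has led you to conflate two different roles of the symbol~$\SS_0$.

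You set $\SS_0 := \SS_1 \cap \SS_2$ and then argue for $\PPP_{\TTT}(\SS', \SS_0, \ww)$ with this \emph{fixed}~$\SS_0$. But Definition~\ref{D:Propp} demands $\PPP_{\TTT}(\SS', \SS_0, \ww)$ for \emph{every} subset $\SS_0 \subseteq \SS$; the second argument is a free parameter, not the amalgamating subset. Your syllable-by-syllable reduction, which applies the inductive $\Propp$ on each factor with parameter $\SS_1 \cap \SS_2$, produces the amalgam normal form of~$\cl\ww$ via special transformations. This is the content of Lemmas~\ref{L:Pieces} and~\ref{L:PiecesBis}, and in the paper it is only a preliminary step. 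The actual proof of the union lemma (Proposition~\ref{P:Union}) then takes an \emph{arbitrary} $\SS_0 \subseteq \SS$, applies the inductive $\Propp$ to the tail piece $\ww_\rr\wws$ with parameter $\SS_0 \cap \SS_\ie$, and runs a secondary induction on the amalgam length~$\AL{\cl\ww}$ to handle the case where the resulting transversal representative falls back into~$\GG_{1,2}$. None of this appears in your outline; correspondingly, your construction of~$\TTT(\SS_0)$ for general~$\SS_0$ is deferred to ``a recursive variant of the same procedure,'' which does not suffice---the paper's definition uses the tail map~$\tau$ together with an asymmetric choice between $\TTT_1(\SS_0 \cap \SS_1)$ and $\TTT_2(\SS_0 \cap \SS_2) \setminus \GG_1$, not a recursion on~$\SS_0$. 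A secondary issue: your strengthened hypothesis that $\TTT_1$ and~$\TTT_2$ agree on~$\SG{\SS_1 \cap \SS_2}$ is not something you can freely impose, since the spherical base case pins down its transversals via the minimality criterion of Subsection~\ref{SS:SphericalPP}; the paper sidesteps this entirely by the asymmetric definition just mentioned.
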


By Lemma~\ref{L:Implies}, Proposition~\ref{P:FCMain} directly implies Proposition~\ref{P:FC}. The proof of Proposition~\ref{P:FCMain} will occupy the rest of this section, and consists in proving that every Artin--Tits presentation of spherical type satisfies Property~$\Propp$ (Proposition~\ref{P:SphericalP}) and that the union of two presentations that are in amalgam position and satisfy Property~$\Propp$ satisfies Property~$\Propp$ (Proposition~\ref{P:Union}). We begin with two direct consequences of the definition:

\begin{lemm}
\label{L:Subgroup}
Assume that $(\SS, \RR)$ is a positive presentation that satisfies Property~$\Propp$. 

\ITEM1 For all $\SS',\SS''$ included in~$\SS$, we have $\SG{\SS'}\cap \SG{\SS''} = \SG{\SS'\cap \SS''}$.

\ITEM2 For all~$\SS'$ included in~$\SS$ and~$\ww$ in~$\WWW\SS$, 
\begin{equation}
\label{E:Subgroup}
\cl\ww \in \SG{\SS'} \quad \mbox{implies} \quad \exists \vv \in \WWW{\SS'}\ (\ww \gozotR \vv).
\end{equation}
\end{lemm}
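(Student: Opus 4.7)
The plan is to exploit Property~$\Propp$ by specializing the auxiliary set~$\SS_0$ in the universal statement.

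For part~\ITEM2, I would apply the instance $\PPP_{\TTT}(\SS', \emptyset, \ww)$. Since $\SS' \cap \emptyset = \emptyset$, the free monoid $\WWW{\emptyset}$ reduces to~$\{\ew\}$, so the word~$\uu$ supplied by Property~$\Propp$ is forced to be empty, and the conclusion $\ww \gozotR \vv\uu$ collapses to $\ww \gozotR \vv$ with $\vv \in \WWW{\SS'}$, which is exactly~\eqref{E:Subgroup}. The side condition $\cl\vv \in \TTT(\emptyset)$ is automatic because $\SG{\emptyset} = \{1\}$ makes $\TTT(\emptyset)$ equal to the whole group~$\GR\SS\RR$.

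For part~\ITEM1, the inclusion $\SG{\SS' \cap \SS''} \subseteq \SG{\SS'} \cap \SG{\SS''}$ is trivial from $\SS' \cap \SS'' \subseteq \SS'$ and $\SS' \cap \SS'' \subseteq \SS''$. For the reverse inclusion, I would fix $\gg \in \SG{\SS'} \cap \SG{\SS''}$, pick any word $\ww \in \WWW{\SS'}$ with $\cl\ww = \gg$, and then invoke $\PPP_{\TTT}(\SS'', \SS', \ww)$, which is legitimate because $\cl\ww = \gg \in \SG{\SS''}$. This yields $\vv \in \WWW{\SS''}$ and $\uu \in \WWW{\SS' \cap \SS''}$ with $\ww \gozotR \vv\uu$ and $\cl\vv \in \TTT(\SS')$.

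The crux is then to show $\cl\vv = 1$. Because $\cl\ww \in \SG{\SS'}$ and $\cl\uu \in \SG{\SS' \cap \SS''} \subseteq \SG{\SS'}$, the equality $\cl\ww = \cl\vv \cdot \cl\uu$ forces $\cl\vv \in \SG{\SS'}$, that is, $\cl\vv$ lies in the trivial left-$\SG{\SS'}$-coset. Since the transversal~$\TTT(\SS')$ meets each coset exactly once and contains~$1$, the only candidate is $\cl\vv = 1$; hence $\gg = \cl\uu \in \SG{\SS' \cap \SS''}$, as wanted. The only non-routine point is this transversal pinning of~$\cl\vv$, and it is precisely what the requirement ``$1 \in \TTT(\SS_0)$'' built into the definition of an $\SS$-sequence of transversals is designed to enable, so no serious obstacle is expected.
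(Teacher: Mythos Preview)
Your proof is correct and follows essentially the same approach as the paper's: both parts specialize the parameters of Property~$\Propp$ and use the transversal condition (together with $1 \in \TTT(\SS_0)$) to force $\cl\vv = 1$. The only cosmetic difference is that in part~\ITEM1 you invoke $\PPP_{\TTT}(\SS'', \SS', \ww)$ with $\ww \in \WWW{\SS'}$, whereas the paper invokes $\PPP_{\TTT}(\SS', \SS'', \ww)$ with an arbitrary $\ww \in \WWW\SS$; since the roles of~$\SS'$ and~$\SS''$ are symmetric here, the two arguments are interchangeable.
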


\begin{proof}
Assume that $(\SS, \RR)$ satisfies Property~$\Propp$ with respect to~$\TTT$. 

\ITEM1 The inclusion $\SG{\SS'\cap \SS''} \ince \SG{\SS'}\cap \SG{\SS''}$ is always true. For the other inclusion, assume that $\gg$ belongs to~$\SG{\SS'}\cap \SG{\SS''}$. Let $\ww$ be a word of~$\WWW\SS$ representing~$\gg$. By~$\PPP_{\TTT}(\SS', \SS'', \ww)$, there exist~$\vv$ in~$\WWW{\SS'}$ and~$\uu$ in~$\WWW{\SS' \cap \SS''}$ satisfying $\ww \gozotR \vv \uu$ and $\cl\vv \in \TTT(\SS'')$. Then we have $\gg = \cl\ww = \cl\vv \, \cl\uu$. By hypothesis $\cl\vv$ belongs to~$\SG{\SS''}$, so $\cl\vv$ is the element of the $\SG{\SS''}$-transversal~$\TTT(\SS'')$ that belongs to the left-$\SG{\SS''}$-coset of~$\cl\ww$. Now, by hypothesis, $\cl\ww$ belongs to~$\SG{\SS''}$, hence $\cl\vv$ must be~$1$, and, therefore, we have $\cl\ww = \cl\uu$, whence $\gg = \cl\uu \in \SG{\SS' \cap \SS''}$.

\ITEM2 Assume $\cl\ww \in \SG{\SS'}$. Then, by $\PPP_{\TTT}(\SS', \emptyset, \ww)$, there exist~$\vv$ in~$\WWW{\SS'}$ and~$\uu$ in~$\WWW\emptyset$ satisfying $\ww \gozotR \vv \uu$ and $\cl\vv \in \TTT(\emptyset)$. The only possibility is $\uu = \ew$, and, therefore, we have $\ww \gozotR \vv$. So \eqref{E:Subgroup} is satisfied.
\end{proof}

\subsection{The spherical case}
\label{SS:SphericalPP}

According to the scheme described above, the first step toward Proposition~\ref{P:FCMain} is to consider Artin--Tits presentations of spherical type. We shall establish

\begin{prop}
\label{P:SphericalP}
Every Artin--Tits presentation of spherical type satisfies Property~$\Propp$.
\end{prop}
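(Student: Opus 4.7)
The plan is to establish Property~$\Propp$ for a spherical Artin--Tits presentation $(\SS, \RR)$ by exhibiting a coherent $\SS$-sequence of transversals $\TTT$ and verifying $\PPP_{\TTT}(\SS', \SS_0, \ww)$ through a word-level analysis driven by the Garside structure of~$\MON\SS\RR$.

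First, for each $\SS_0 \ince \SS$, I would define $\TTT(\SS_0)$ by selecting in every left-coset of $\SG{\SS_0}$ the unique element $\tau$ minimal with respect to a Garside complexity (for instance, the sum of the positive-word lengths of its right-coprime numerator and denominator), with a fixed lex tie-breaking. The lattice structure of~$\MON\SS\RR$ ensures existence and uniqueness of this minimum. Coherence---that $\TTT(\SS_0) \cap \SG{\SS'}$ is a transversal of $\SG{\SS' \cap \SS_0}$ in $\SG{\SS'}$---follows from van der Lek's parabolic intersection theorem~\cite{VdL} $\SG{\SS'} \cap \SG{\SS_0} = \SG{\SS' \cap \SS_0}$, combined with the fact that every standard parabolic submonoid of a spherical Artin--Tits monoid is again a spherical Artin--Tits monoid, so that the Garside complexity is intrinsic.

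To verify $\PPP_{\TTT}(\SS', \SS_0, \ww)$ under $\cl\ww \in \SG{\SS'}$, the opening steps are forced. By Lemma~\ref{L:Rev}(1), $\ww \goztR \ww_1 \ww_2\inv$ with $\ww_1, \ww_2 \in \WWWp\SS$ representing $\NR{\cl\ww}$ and $\DR{\cl\ww}$; the parabolic intersection theorem applied inside the monoid places these in~$\SGp{\SS'}$, and by Lemma~\ref{L:Rev}(2) we may assume $\ww_1, \ww_2 \in \WWWp{\SS'}$. Inside the spherical parabolic $\GR{\SS'}{\RR_{\SS'}}$, I would then run an iterative procedure whose elementary moves are all realizable by special transformations of types $0$, $1$, $2\smalll$, $2\smallr$: switch the current word to left-fraction form $\widetilde\ww_1\inv \widetilde\ww_2$ via the symmetric (left) counterpart of Lemma~\ref{L:Rev}; in the positive word $\widetilde\ww_2$, extract by type~$1$ transformations the maximal right-divisor of its monoid image in $\SGp{\SS' \cap \SS_0}$; right-reverse back and extract the analogous maximal right-$\SGp{\SS' \cap \SS_0}$-divisor of the new positive denominator; iterate. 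Each round strictly decreases a Garside complexity of the current representative modulo $\SG{\SS' \cap \SS_0}$, so the procedure terminates at a word of the form $\vv\,\uu$ with $\vv \in \WWW{\SS'}$ and $\uu \in \WWW{\SS' \cap \SS_0}$, and by construction $\cl\vv$ is the distinguished $\TTT(\SS_0)$-representative of the coset $\cl\ww \cdot \SG{\SS_0}$.

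The main obstacle is this iteration. As a cautionary example, take the $A_2$ Artin--Tits group with $\SS_0 = \{\tta\}$: the elements $\ttb$ and $\ttb\tta\inv$ lie in the same coset of $\SG{\{\tta\}}$, yet in their respective left-fraction forms neither presents a nontrivial right-divisor in $\SGp{\{\tta\}}$ in the numerator, so extracting $\SS_0$-parts from a single side of a fraction cannot reach the canonical representative; one really must alternate left- and right-fraction extractions, relying on the bilateral symmetry of Artin--Tits relations to keep both reversings available. One must also confirm that the iteration terminates at exactly the element prescribed by~$\TTT(\SS_0)$, which will follow from the Garside-theoretic rigidity of spherical-type monoids---specifically from the clean interaction of maximal parabolic right-divisors with the divisibility lattice. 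Once these technical points are in hand, Property~$\Propp$ is established, and Property~$\Prop$ then follows via Lemma~\ref{L:Implies}.
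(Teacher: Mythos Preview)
Your proposal has a genuine gap at exactly the point you flag yourself: the alternating extraction procedure is only a heuristic, and you supply neither a termination argument nor a proof that the terminal element is the one singled out by your~$\TTT(\SS_0)$. Your own $A_2$ example shows that extracting maximal $\SGp{\SS_0}$-right-divisors from one side of a fraction can stall at a non-minimal representative; you then assert that alternating sides fixes this, but you give no invariant that strictly decreases. Saying ``once these technical points are in hand'' is precisely where the proof was supposed to be. A further difficulty you do not address is that your~$\TTT(\SS_0)$ involves an ad hoc lexicographic tie-break; you would need to show that the iterative procedure, which knows nothing about that tie-break, nonetheless converges to the element it selects.

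The paper avoids iteration altogether. It defines~$\TTT(\SS_0)$ as the set of $\SS_0$-\emph{minimal} elements, where minimality is lexicographic on the pair $(\LG{\DL\gg}, \LG{\NL\gg})$, and then proves structural coprimality facts (Lemma~\ref{L:Minimal}): if $\gg'$ is $\SS_0$-minimal and $\hh \in \SG{\SS_0}$, then $\DL{\gg'}$ is left-coprime to $\NL{\gg'}\NR\hh$, and $\NL{\gg'}\NR\hh$ is right-coprime to~$\DR\hh$. These coprimality statements mean that a \emph{single} pass---one left-reversing to reach the left-fraction form of~$\cl\ww$, then one right-reversing on the numerator side---already lands on a word $\vv_1\inv \vv_2\, \uu_1 \uu_2\inv$ with $\vv_1\inv \vv_2$ representing~$\gg'$ and $\uu_1 \uu_2\inv$ representing~$\hh$. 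No loop, no termination measure, no tie-breaking issue. The reduction to~$\WWW{\SS'}$ is handled separately (Lemma~\ref{L:Propp}) using the Artin--Tits-specific fact that special transformations cannot introduce new generators. If you want to salvage your approach, the missing ingredient is an analogue of Lemma~\ref{L:Minimal} tailored to your complexity; but once you have such a lemma, the iteration becomes unnecessary.
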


A first observation is that, for Artin--Tits presentations, Condition~$\PPP_{\TTT}(\SS', \SS_0, \ww)$, which simultaneously involves two subsets~$\SS_0, \SS'$ of~$\SS$, can be split into two simpler conditions involving only one subset of~$\SS$ at a time.

\begin{lemm}
\label{L:Propp}
Assume that $(\SS, \RR)$ is an Artin--Tits presentation. Then $(\SS, \RR)$ satisfies Property~$\Propp$ with respect to~$\TTT$ if and only if, for all~$\SS_0$ included in~$\SS$ and $\ww$ in~$\WWW\SS$, we have~\eqref{E:Subgroup} and
\begin{equation}
\label{E:Proppp}
\exists \vv \in \WWW\SS \ \exists \uu \in \WWW{\SS_0} \ (\ww \gozotR \vv \uu \mbox{\ and \ } \cl\vv \in \TTT(\SS_0)).
\end{equation}
\end{lemm}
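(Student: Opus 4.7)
My plan is to verify the two implications separately. For the forward direction, the definition of~$\Propp$ is simply specialized twice: condition~\eqref{E:Proppp} is literally $\PPP_\TTT(\SS, \SS_0, \ww)$, since $\WWW{\SS \cap \SS_0} = \WWW{\SS_0}$ and the hypothesis $\cl\ww \in \SG\SS$ is automatic, while condition~\eqref{E:Subgroup} follows from $\PPP_\TTT(\SS', \emptyset, \ww)$ on noting that $\WWW\emptyset = \{\ew\}$, exactly as already exploited in the proof of Lemma~\ref{L:Subgroup}~\ITEM2.

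For the backward direction, assume both~\eqref{E:Subgroup} and~\eqref{E:Proppp} and fix $\SS', \SS_0 \ince \SS$ together with $\ww \in \WWW\SS$ satisfying $\cl\ww \in \SG{\SS'}$. The strategy is to apply the two hypotheses in sequence. First, \eqref{E:Subgroup} produces some $\vv_0 \in \WWW{\SS'}$ with $\ww \gozotR \vv_0$. Second, \eqref{E:Proppp} applied to the word~$\vv_0$ with parameter~$\SS_0$ produces $\vv \in \WWW\SS$ and $\uu \in \WWW{\SS_0}$ with $\vv_0 \gozotR \vv\uu$ and $\cl\vv \in \TTT(\SS_0)$. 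Concatenating, we obtain $\ww \gozotR \vv\uu$ with $\cl\vv \in \TTT(\SS_0)$, so all that remains is to upgrade ``$\vv \in \WWW\SS$'' to ``$\vv \in \WWW{\SS'}$'' and ``$\uu \in \WWW{\SS_0}$'' to ``$\uu \in \WWW{\SS_0 \cap \SS'}$''.

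The key step---and the only place where the Artin--Tits hypothesis intervenes---is to observe that the derivation $\vv_0 \gozotR \vv\uu$ never leaves $\WWW{\SS'}$. By induction on its length, it suffices to verify that each special transformation of type~$0$, $1$, $2\smallr$, or~$2\smalll$ applied to a word in $\WWW{\SS'}$ produces a word in $\WWW{\SS'}$. Types $0$ and~$1$ introduce no new letter. For type~$2$ one uses the specific shape of an Artin--Tits relation: the two sides involve the same two generators $\ss,\tt$ of~$\SS$, and they start (respectively end) with distinct letters among $\{\ss,\tt\}$. Hence, whenever the nonempty positive factors $\vv$ and $\vv'$ required by Definition~\ref{D:Special} both lie in $\WWWp{\SS'}$, between them they exhibit both~$\ss$ and~$\tt$, forcing $\ss,\tt \in \SS'$ and in turn $\uu,\uu' \in \WWWp{\SS'}$. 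Once this closure property is secured, $\vv\uu \in \WWW{\SS'}$ splits as $\vv \in \WWW{\SS'}$ and $\uu \in \WWW{\SS_0} \cap \WWW{\SS'} = \WWW{\SS_0 \cap \SS'}$, completing $\PPP_\TTT(\SS',\SS_0,\ww)$. I expect this alphabet-preservation step to be the main obstacle, but it should reduce to a direct case check on the four transformation types.
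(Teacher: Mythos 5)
Your proposal is correct and follows essentially the same route as the paper: the forward direction is the same double specialization of Property~$\Propp$, and the backward direction chains~\eqref{E:Subgroup} with~\eqref{E:Proppp} and then uses the fact that, for an Artin--Tits presentation, special transformations of type~$0$, $1$, or~$2$ never introduce a letter outside~$\SS'$. Your detailed case check for type~$2$ (both sides of an Artin--Tits relation use the same two generators and begin with distinct ones, so the nonempty factors $\vv, \vv'$ expose both) is simply a fuller justification of the alphabet-preservation claim the paper states in one sentence.
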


\begin{proof}
We already observed in Lemma~\ref{L:Subgroup} that Property~$\Propp$ implies~\eqref{E:Subgroup} for all~$\SS'$ and~$\ww$. On the other hand, \eqref{E:Proppp} is (equivalent to) $\PPP_{\TTT}(\SS, \SS_0, \ww)$, so Property~$\Propp$ implies~\eqref{E:Subgroup} and~\eqref{E:Proppp}.

Conversely, assume that $(\SS, \RR)$ is an Artin--Tits presentation that satisfies~\eqref{E:Subgroup} and~\eqref{E:Proppp} for all~$\SS', \SS_0$, and~$\ww$. Let~$\SS_0, \SS'$ be subsets of~$\SS$ and let $\ww$ be a word of~$\WWW\SS$ that represents an element of~$\SG{\SS'}$. By~\eqref{E:Subgroup}, there exists a word~$\ww'$ of~$\WWW{\SS'}$ satisfying $\ww \gozotR \ww'$ holds. Next, by~\eqref{E:Proppp}, we find $\vv$ in~$\WWW\SS$ and $\uu$ in~$\WWW{\SS_0}$ satisfying $\ww' \gozotR \vv\uu$ and $\cl\vv \in \TTT(\SS_0)$. Now, due to the specific syntactic form of Artin--Tits relations, the hypothesis that $\ww'$ lies in~$\WWW{\SS'}$ implies that $\ww''$ lies in~$\WWW{\SS'}$ for every word~$\ww''$ satisfying $\ww' \gozotR \ww''$. Indeed, a special transformation of type~$0, 1$, or~$2$ can never let a new letter of~$\SS$ or~$\SS\inv$ appear. Hence $\vv$ must lie in~$\WWW{\SS'}$, and $\uu$ must lie in~$\WWW{\SS_0 \cap \SS'}$. So $\PPP_{\TTT}(\SS', \SS_0, \ww)$ is satisfied.
\end{proof}

Next, we observe that \eqref{E:Subgroup} is almost trivial.

\begin{lemm}
\label{L:ProppS}
Assume that $(\SS, \RR)$ is an Artin--Tits presentation of spherical type. Then \eqref{E:Subgroup} is satisfied for all~$\SS_0$ included in~$\SS$ and $\ww$ in~$\WWW\SS$.
\end{lemm}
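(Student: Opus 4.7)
My plan is to reduce the statement to an application of Lemma~\ref{L:Rev}, exploiting the good behavior of parabolic subgroups in the spherical case.

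First, I would apply Lemma~\ref{L:Rev}\ITEM1 to the word $\ww$: via transformations of types $0$ and $2$, one reaches a word $\vv_1 \vv_2\inv$ with $\vv_1, \vv_2 \in \WWWp\SS$, where $\cl{\vv_1} = \gg_1$ and $\cl{\vv_2} = \gg_2$ are the right-numerator and right-denominator of $\cl\ww$ in $\MON\SS\RR$. By Lemma~\ref{L:Rev}\ITEM2, it will then be enough to exhibit \emph{any} words $\vv'_1, \vv'_2$, this time in $\WWWp{\SS'}$, that represent $\gg_1$ and $\gg_2$: we will obtain $\ww \gozotR \vv'_1 \vv'_2{}\inv$, a word of $\WWW{\SS'}$, which is what~\eqref{E:Subgroup} requires.

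The core of the argument is therefore the following claim: \emph{when $\cl\ww$ lies in $\SG{\SS'}$, the elements $\gg_1$ and $\gg_2$ actually lie in the parabolic submonoid $\SGp{\SS'}$}. To establish it, I would invoke van der Lek's theorem, which identifies the parabolic subgroup $\SG{\SS'}$ with the Artin--Tits group $\GR{\SS'}{\RR'}$ generated by the sub-diagram indexed by $\SS'$, where $\RR'$ gathers the relations of~$\RR$ involving only letters of~$\SS'$. Since any sub-diagram of a spherical Coxeter diagram is spherical, Proposition~\ref{P:Fraction} applies inside $\MON{\SS'}{\RR'}$ and provides a decomposition $\cl\ww = \ff_1 \ff_2\inv$ with $\ff_1, \ff_2 \in \SGp{\SS'}$. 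Comparing this with the right-coprime decomposition $\gg_1 \gg_2\inv$ inside $\MON\SS\RR$ via the uniqueness clause of Proposition~\ref{P:Fraction} produces some $\hh \in \MON\SS\RR$ with $\ff_i = \gg_i \hh$ for $\ie = 1,2$, exhibiting each $\gg_i$ as a left-divisor in $\MON\SS\RR$ of an element of $\SGp{\SS'}$.

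I expect the one substantive ingredient to be the \emph{divisor-closure} of the parabolic submonoid: any left-divisor in $\MON\SS\RR$ of an element of $\SGp{\SS'}$ must itself lie in $\SGp{\SS'}$. This is a classical structural property of spherical-type Artin--Tits monoids going back to Brieskorn and Saito. Once granted, it immediately yields $\gg_1, \gg_2 \in \SGp{\SS'}$, and Lemma~\ref{L:Rev}\ITEM2 then delivers the required word of $\WWW{\SS'}$, completing the proof.
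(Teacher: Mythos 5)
Your proof is correct, and it shares the skeleton of the paper's argument---everything is reduced, via Lemma~\ref{L:Rev}\ITEM2, to showing that $\NR{\cl\ww}$ and $\DR{\cl\ww}$ admit representatives in~$\WWWp{\SS_0}$---but you justify that middle claim by a genuinely different route. The paper obtains it directly and syntactically: since $\cl\ww$ has a representative~$\ww'$ in~$\WWW{\SS_0}$, one runs the reversing process of Lemma~\ref{L:Rev}\ITEM1 on~$\ww'$; transformations of types~$0$ and~$2$ applied to a word of~$\WWW{\SS_0}$ never introduce a letter outside~$\SS_0$ (each Artin--Tits relation involves only the two letters already present), so the resulting fraction $\ww_1 \ww_2\inv$ has $\ww_1, \ww_2$ in~$\WWWp{\SS_0}$ and these represent the right numerator and denominator of~$\cl\ww$. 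You argue structurally instead: van der Lek identifies $\SG{\SS_0}$ with the Artin--Tits group of the (again spherical) sub-presentation, Proposition~\ref{P:Fraction} applied there gives $\cl\ww = \ff_1 \ff_2\inv$ with $\ff_1, \ff_2$ in~$\SGp{\SS_0}$, the comparison clause of Proposition~\ref{P:Fraction} makes each~$\gg_\ie$ a left-divisor of~$\ff_\ie$ in~$\MON\SS\RR$, and divisor-closure of the parabolic submonoid concludes. All these steps are valid; the only remarks are that your two external inputs are heavier than necessary. Van der Lek's injectivity plays no role: you only need to push the fraction decomposition forward along the canonical homomorphism from the group of the sub-presentation to~$\GR\SS\RR$, and the sub-presentation is spherical because a standard parabolic subgroup of a finite Coxeter group is finite. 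Likewise, divisor-closure need not be quoted as a black box: both sides of an Artin--Tits relation involve the same two letters, so every positive word representing an element of~$\SGp{\SS_0}$ lies in~$\WWWp{\SS_0}$, and a prefix of such a word is again a word over~$\SS_0$. Finally, your opening application of Lemma~\ref{L:Rev}\ITEM1 to~$\ww$ itself is superfluous, since part~\ITEM2 alone yields $\ww \gozotR \vv'_1 \vv'_2{}\inv$ once positive $\SS_0$-representatives of~$\gg_1, \gg_2$ are in hand. In exchange, your route makes explicit the structural fact (membership of the numerator and denominator in the parabolic submonoid) that the paper's ``therefore'' leaves implicit.
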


\begin{proof}
Assume that $\SS_0$ is included in~$\SS$ and $\ww$ is an element of~$\WWW\SS$ that represents an element of~$\SG{\SS_0}$. Then $\cl\ww$ has at least one representative that lies in~$\WWW{\SS_0}$ and, therefore, $\NR{\cl\ww}$ and~$\DR{\cl\ww}$ admit representatives, say~$\vv_1, \vv_2$ that lie in~$\WWWp{\SS_0}$. Then the word~$\vv_1 \vv_2\inv$ is a word of~$\WWW{\SS_0}$ and, by Lemma~\ref{L:Rev}\ITEM2, we have $\ww \gozotR \vv_1 \vv_2\inv$.
\end{proof}

So we are left with proving that, if $(\SS, \RR)$ is an Artin--Tits presentation of spherical type, then all instances of~\eqref{E:Proppp} are satisfied. We begin with a general auxiliary result that will be used several times in the sequel.

\begin{lemm}
\label{L:Lcm}
Assume that $(\SS, \RR)$ is an Artin--Tits presentation, $\SS_0$ is included in~$\SS$, and we have $\gg \hh = \gg' \hh'$ with $\gg, \gg'$ in~$\MON\SS\RR$ and $\hh, \hh'$ in~$\SGp{\SS_0}$. If $\gg$ is right-divisible by no nontrivial element of~$\SGp{\SS_0}$, then necessarily $\hh$ right-divides~$\hh'$.
\end{lemm}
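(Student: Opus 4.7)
The plan is to reduce the claim to a single right-cancellation by exploiting the lattice structure of left-divisibility inside the standard parabolic submonoid~$\SGp{\SS_0}$. Specifically, I would invoke the classical result---going back to Brieskorn--Saito~\cite{BrS} in the spherical case and extended by van der Lek~\cite{VdL} in full generality---that $\SGp{\SS_0}$ is canonically isomorphic to an Artin--Tits monoid in its own right, and that the inclusion $\SGp{\SS_0}\hookrightarrow\MON\SS\RR$ preserves least common left-multiples: whenever two elements of~$\SGp{\SS_0}$ admit a common left-multiple in~$\MON\SS\RR$, they admit a least common left-multiple, which again lies in~$\SGp{\SS_0}$.

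Applying this to~$\hh,\hh'$, whose common left-multiple $\gg\hh=\gg'\hh'$ is supplied by the hypothesis, I would obtain $\ell\in\SGp{\SS_0}$ with $\ell=\alpha\hh=\beta\hh'$ for some $\alpha,\beta\in\SGp{\SS_0}$, characterized by the property that every common left-multiple of $\hh,\hh'$ is a left-multiple of~$\ell$. In particular, $\gg\hh=\mu\ell$ for some $\mu\in\MON\SS\RR$. Substituting $\ell=\alpha\hh$ gives $\gg\hh=\mu\alpha\hh$; since every Artin--Tits monoid is right-cancellative~\cite{BrS}, this forces $\gg=\mu\alpha$. Thus $\alpha$ is a right-divisor of~$\gg$ lying in~$\SGp{\SS_0}$, and the hypothesis on~$\gg$ forces $\alpha=1$. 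Therefore $\ell=\hh$, and the equality $\ell=\beta\hh'$ then reads $\hh=\beta\hh'$ with $\beta\in\SGp{\SS_0}$, which is the required divisibility relation between~$\hh$ and~$\hh'$ inside the parabolic submonoid, as asserted by the lemma.

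The main subtle point is justifying that $\SGp{\SS_0}$ is closed under least common left-multiples, which is the real structural input. This follows from van der Lek's identification of~$\SGp{\SS_0}$ with the Artin--Tits monoid generated by~$\SS_0$ together with those relations of~$\RR$ involving only letters of~$\SS_0$, combined with the compatibility of the divisibility lattices under this embedding. Once that is granted, the rest is a single application of right-cancellativity, which is why the proof reduces to a short chain of substitutions rather than requiring any additional combinatorial input.
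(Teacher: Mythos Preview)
Your argument is correct and is essentially identical to the paper's: both take the least common left-multiple of~$\hh,\hh'$ inside~$\SGp{\SS_0}$, use that $\gg\hh$ is a left-multiple of it to factor out a right-divisor of~$\gg$ lying in~$\SGp{\SS_0}$, and conclude via right-cancellativity that this divisor is trivial. Your notation $(\ell,\alpha,\beta,\mu)$ corresponds exactly to the paper's $(\hh'',\ff,\ff',\gg'')$, and you are in fact more explicit than the paper about why the parabolic lcm remains an lcm in the ambient monoid.
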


\begin{proof}
(See Figure~\ref{F:Lcm}.) In the monoid~$\MON\SS\RR$, the elements~$\hh$ and~$\hh'$ admit a least common left-multiple, say~$\hh''$, that lies in~$\SGp{\SS_0}$. Since $\gg \hh$, which is also~$\gg' \hh'$, is a left-multiple of~$\hh$ and~$\hh'$, it is a left-multiple of~$\hh''$, say $\gg \hh = \gg'' \hh''$ with $\gg'' \in \MON\SS\RR$. Write $\hh'' = \ff \hh$. We find $\gg \hh = \gg'' \ff \hh$, whence $\gg = \gg'' \ff$. By construction, $\ff$ belongs to~$\SGp{\SS_0}$ and right-divides~$\gg$, so the hypothesis that $\gg$ is right-divisible by no nontrivial element of~$\SGp{\SS_0}$ implies $\ff = 1$, whence $\hh'' = \hh$. The latter equality means that $\hh'$ right-divides~$\hh$. 
\end{proof}

\begin{figure}[htb]
$$\begin{picture}(40,16)(0,1)
\pcline[style=exist]{->}(1,8)(19,8)
\taput{$\gg''$}
\pcline{->}(21,8)(39,8)
\taput{$\hh''$}
\psarc(7,7){7}{180}{270}
\pcline{->}(7,0)(29,0)
\tbput{$\gg$\qquad}
\psarc(7,9){7}{80}{180}
\pcline{->}(7,16)(29,16)
\taput{$\gg'$\qquad}
\pcline{->}(31,15)(39,9)
\put(36,13){$\hh'$}
\pcline{->}(31,1)(39,7)
\put(35.5,1.5){$\hh$}
\pcline{->}(21,9)(29,15)
\put(21,12){$\ff'$}
\pcline{->}(21,7)(29,1)
\put(21,2){$\ff$}
\end{picture}$$
\caption{\sf\smaller Proof of Lemma~\ref{L:Lcm}.}
\label{F:Lcm}
\end{figure}

Until the end of this subsection, we assume that $(\SS, \RR)$ is an Artin--Tits presentation of spherical type. We denote by~$\GG$ the group~$\GR\SS\RR$, and by~$\GGp$ the monoid~$\MON\SS\RR$. We shall now construct an $\SS$-sequence of transversals~$\TTT$ with the aim of proving that \eqref{E:Proppp} is always satisfied.

\begin{defi}
Assume that $\SS_0$ is included in~$\SS$. An element~$\gg$ of~$\GG$ is called \emph{$\SS_0$-minimal} if, for every~$\hh$ in~$\SG{\SS_0}$, we have 
$\LG{\DL{\gg\hh}} \ge \LG{\DL\gg}$ and $\LG{\DL{\gg\hh}} = \LG{\DL\gg}$ implies $\LG{\NL{\gg\hh}} \ge\nobreak \LG{\NL\gg}$. Then we put
$$\TTT(\SS_0) = \{\gg \in \GG \mid \mbox{$\gg$ is $\SS_0$-minimal}\}.$$
\end{defi}

We shall show that $\TTT$ is an $\SS$-sequence of transversals witnessing for Property~$\Propp$. We begin with several technical results about $\SS_0$-minimal elements.

\begin{lemm}
\label{L:Minimal}
\ITEM1 An element of~$\GGp$ is $\SS_0$-minimal if and only it is right-divisible by no nontrivial element of~$\SGp{\SS_0}$.

\ITEM2 If $\gg$ is $\SS_0$-minimal, then, for every~$\hh$ in~$\SGp{\SS_0}$, the elements $\DL\gg$ and~$\NL\gg \hh$ are left-coprime.

\ITEM3 If $\gg$ is $\SS_0$-minimal and lies in~$\GGp$, then, for every~$\hh$ in~$\SG{\SS_0}$, the elements $\NR{\gg \hh}$ and~$\DR{\hh}$ are right-coprime.
 
\ITEM4 If $\gg$ is $\SS_0$-minimal, then so is $\NL\gg$.
\end{lemm}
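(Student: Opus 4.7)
The four assertions are linked, and my plan is to establish (1) first as a bridge between the semantic definition of $\SS_0$-minimality and the combinatorial no-right-divisor condition; then to derive (2) and (4) by short length arguments using uniqueness of the left-fraction; and finally to treat (3), which is the most delicate. Throughout I will rely on $\GGp$ being a Garside monoid (so cancellative, with left-gcds and left-lcms) and on $\SGp{\SS_0}$ being closed in $\GGp$ under both left-divisors and right-divisors---standard properties of parabolic submonoids in the spherical-type case. The workhorse will be Lemma~\ref{L:Lcm}.

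\textbf{Part (1).} For the ``only if'' direction I argue contrapositively: if some nontrivial $\ff \in \SGp{\SS_0}$ right-divides $\gg \in \GGp$, then choosing $\hh := \ff\inv \in \SG{\SS_0}$ gives $\gg\hh \in \GGp$ with $\LG{\DL{\gg\hh}} = 0 = \LG{\DL\gg}$ and $\LG{\NL{\gg\hh}} = \LG\gg - \LG\ff < \LG{\NL\gg}$, violating minimality. For the converse, assuming $\gg \in \GGp$ has no nontrivial $\SGp{\SS_0}$ right-divisor, I take $\hh \in \SG{\SS_0}$ and factor $\hh = \hh_1 \hh_2\inv$ with $\hh_1, \hh_2 \in \SGp{\SS_0}$ right-coprime. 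The first minimality condition is automatic since $\LG{\DL\gg} = 0$. For the second, if $\LG{\DL{\gg\hh}} = 0$, i.e.\ $\gg\hh \in \GGp$, then $\gg \cdot \hh_1 = (\gg\hh) \cdot \hh_2$ holds in $\GGp$, and Lemma~\ref{L:Lcm} yields $\hh_1$ right-dividing $\hh_2$, hence $\hh_1 = 1$ by right-coprimality; then $\gg = (\gg\hh)\hh_2$, so $\hh_2 \in \SGp{\SS_0}$ right-divides $\gg$, whence $\hh_2 = 1$ and $\gg\hh = \gg$, so that $\LG{\NL{\gg\hh}} = \LG{\NL\gg}$.

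\textbf{Parts (2) and (4).} Both follow from (1) together with uniqueness of the left-fraction. For (2), a hypothetical nontrivial common left-divisor $\kk$ of $\DL\gg$ and $\NL\gg\hh$ would let me write $\DL\gg = \kk\uu$ and $\NL\gg\hh = \kk\vv$, giving $\gg\hh = \uu\inv\vv$ and so $\LG{\DL{\gg\hh}} \le \LG\uu < \LG{\DL\gg}$, contradicting the first clause of minimality. For (4), by (1) it suffices to show that $\NL\gg$ has no nontrivial $\SGp{\SS_0}$ right-divisor. If $\NL\gg = \kk\ff$ with $\ff \in \SGp{\SS_0}\setminus\{1\}$, I take $\hh := \ff\inv$ and reduce $\gg\hh = \DL\gg\inv \kk$ to canonical form by cancelling the left-gcd $\mm$ of $\DL\gg$ and $\kk$: if $\mm \ne 1$ the denominator strictly shortens, contradicting minimality; if $\mm = 1$ then $\DL{\gg\hh} = \DL\gg$ and $\NL{\gg\hh} = \kk$, and the second clause of minimality forces $\LG\kk \ge \LG\kk + \LG\ff$, which is impossible.

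\textbf{Part (3): the main obstacle.} The challenge is to control the cancellation element $\ee$ appearing in the right-fraction reduction of $\gg\hh$. I will write $\hh = \NR\hh \DR\hh\inv$ right-coprime in $\SGp{\SS_0}$ and use Proposition~\ref{P:Fraction} to produce $\ee \in \GGp$ with $\gg\NR\hh = \NR{\gg\hh}\ee$ and $\DR\hh = \DR{\gg\hh}\ee$. Right-divisor closure of $\SGp{\SS_0}$ applied via $\DR\hh$ places $\ee \in \SGp{\SS_0}$. Then Lemma~\ref{L:Lcm}, invoked for $\gg \cdot \NR\hh = \NR{\gg\hh} \cdot \ee$ with the hypothesis on $\gg$ supplied by (1), yields $\NR\hh$ right-dividing $\ee$, say $\ee = \ee'\NR\hh$; right-cancellation in $\GGp$ gives $\gg = \NR{\gg\hh}\ee'$. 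Since $\ee'$ left-divides $\ee \in \SGp{\SS_0}$, left-divisor closure places $\ee' \in \SGp{\SS_0}$, and (1) forces $\ee' = 1$; hence $\NR{\gg\hh} = \gg$ and $\DR\hh = \DR{\gg\hh}\NR\hh$. A common right-divisor $\ff$ of $\NR{\gg\hh}$ and $\DR\hh$ then lies in $\SGp{\SS_0}$ (via $\DR\hh$) and right-divides $\gg$, so $\ff = 1$ by (1) once more. I expect the only real subtlety to be keeping track of the two parabolic-closure invocations and chaining Lemma~\ref{L:Lcm} with (1).
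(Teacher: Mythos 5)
Your \ITEM1 ``only if'' direction and your parts \ITEM2 and \ITEM4 are correct and essentially coincide with the paper's own arguments (length estimates on the left denominator and numerator via uniqueness of the fraction decomposition, then \ITEM1). The genuine problem is the direction in which you apply Lemma~\ref{L:Lcm}, both in the converse of \ITEM1 and in \ITEM3. In an equality $\gg\hh = \gg'\hh'$ with $\hh, \hh'$ in $\SGp{\SS_0}$ and $\gg$ right-divisible by no nontrivial element of~$\SGp{\SS_0}$, the valid conclusion is that $\hh'$ right-divides~$\hh$: the proof of Lemma~\ref{L:Lcm} shows that the left-lcm of $\hh$ and $\hh'$ is $\hh$ itself, and every later use of the lemma in the paper is in that form (the displayed statement of Lemma~\ref{L:Lcm} is transposed---an inconsistency of the paper that your blind proof has inherited). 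You use the transposed form, and it yields visibly false statements: in the converse of \ITEM1, from $\gg\hh_1 = (\gg\hh)\hh_2$ you deduce that $\hh_1$ right-divides~$\hh_2$, hence $\hh_1 = 1$, then $\hh_2 = 1$ and $\gg\hh = \gg$; but taking for $\hh$ any nontrivial element of~$\SGp{\SS_0}$ (so $\hh_1 = \hh$, $\hh_2 = 1$, and $\gg\hh \in \GGp$), this would force $\hh = 1$. Likewise in \ITEM3 the claim ``$\NR\hh$ right-divides $\ee$'' leads to $\NR{\gg\hh} = \gg$, which is false for the same~$\hh$, since there $\NR{\gg\hh} = \gg\hh$.

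The repair is what the paper actually does. In \ITEM1, the correct direction gives that $\hh_2$ right-divides~$\hh_1$, hence $\hh_2 = 1$ by your right-coprime choice, so $\hh = \hh_1 \in \SGp{\SS_0}$ and $\LG{\NL{\gg\hh}} = \LG\gg + \LG{\hh_1} \ge \LG{\NL\gg}$---which is all you need; do not try to prove $\hh = 1$. In \ITEM3, the correct direction gives that $\ee$ right-divides~$\NR\hh$, hence $\ee = 1$ because $\ee$ also right-divides~$\DR\hh$; thus $\NR{\gg\hh} = \gg\NR\hh$ and $\DR{\gg\hh} = \DR\hh$. But then your final step must change as well: a common right-divisor $\ff$ of $\NR{\gg\hh}$ and $\DR\hh$ right-divides $\gg\NR\hh$, not~$\gg$, so \ITEM1 alone does not kill it; you need Lemma~\ref{L:Lcm} once more (correct direction) applied to $\gg\NR\hh = \gg'\ff$ to get that $\ff$ right-divides~$\NR\hh$, and then $\ff = 1$ by right-coprimality of $\NR\hh$ and~$\DR\hh$. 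This is exactly the paper's proof of \ITEM3, which in fact bypasses the computation of~$\ee$ altogether by proving directly that $\gg\NR\hh$ and $\DR\hh$ are right-coprime. Your tacit appeals to parabolic closure (the right numerator and denominator of $\hh \in \SG{\SS_0}$ lie in~$\SGp{\SS_0}$, and $\SGp{\SS_0}$ is closed under divisors) match what the paper also uses implicitly, so those are acceptable.
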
 

\begin{proof}
\ITEM1 Assume that $\gg$ is an element of~$\GGp$ and $\hh$ is an element of~$\SGp{\SS_0}$ that right-divides~$\gg$, say $\gg = \gg' \hh$ with~$\gg'$ in~$\GGp$. Then we have $\LG{\DL{\gg \hh\inv}} = \LG{\DL{\gg}} = 0$ and $\LG{\NL{\gg \hh\inv}} = \LG{\gg'} = \LG\gg - \LG{\hh}$. If $\gg$ is $\SS_0$-minimal, the only possibility is $\LG{\hh} = 0$, that is, $\hh = 1$. 

Conversely, assume that $\gg$ is an element of~$\GGp$ that is right-divisible by no nontrivial element of~$\SGp{\SS_0}$. Let~$\hh$ belong to~$\SG{\SS_0}$. We have $\DL\gg = 1$, so $\LG{\DL{\gg \hh}} \ge \LG{\DL\gg}$ is always satisfied. Assume $\LG{\DL{\gg \hh}} = \LG{\DL\gg}$, hence $\LG{\DL{\gg \hh}} = 0$, that is, $\gg \hh \in \GGp$. Let $\gg' = \gg \hh$, and write $\hh = \hh_1 \hh_2\inv$ with $\hh_1, \hh_2$ in~$\SGp{\SS_0}$. Then we have $\gg \hh_1 = \gg' \hh_2$. By Lemma~\ref{L:Lcm}, $\hh_2$ right-divides~$\hh_1$, that is, $\hh$ belongs to~$\SGp{\SS_0}$. Then we obtain $\LG{\NL{\gg \hh}} = \LG{\gg \hh} \ge \LG\gg = \LG{\NL\gg}$, and $\gg$ is $\SS_0$-minimal.

\ITEM2 Assume that $\gg$ is $\SS_0$-minimal and $\hh$ lies in~$\SGp{\SS_0}$. Put $\gg_1 = \DL\gg$, $\gg_2 = \NL\gg$. By definition of an $\SS_0$-minimal element, we have $\LG{\DL{\gg \hh}} \ge \LG{\DL\gg} = \LG{\gg_1}$. Assume that $\ff$ is a common left-divisor of~$\gg_1$ and~$\gg_2 \hh$, say $\gg_1 = \ff \gg'_1$ and $\gg_2 \hh = \ff \gg'_2$ with $\gg'_1, \gg'_2$ in~$\GGp$. As we have $\gg \hh = \gg'_1{}\inv \gg'_2$, we deduce $\LG{\DL{\gg \hh}} \le \LG{\gg'_1} = \LG{\gg_1} - \LG\ff$. The only possibility is $\LG\ff = 0$, that is, $\ff = 1$. Hence $\gg_1$ and~$\gg_2 \hh$ are left-coprime.

\ITEM3 Let $\hh_1 = \NR{\hh}$ and $\hh_2 = \DR{\hh}$. Assume that $\ff$ right-divides~$\gg \hh_1$ and~$\hh_2$, say $\gg \hh_1 = \gg' \ff$. As $\ff$ right-divides~$\hh_2$, it lies in~$\SGp{\SS_0}$. As $\gg$ is $\SS_0$-minimal, \ITEM1 implies that it is right-divisible by no nontrivial element of~$\SGp{\SS_0}$. Then Lemma~\ref{L:Lcm} implies that $\ff$ right-divides~$\hh_1$ and, therefore, the hypothesis that $\hh_1$ and~$\hh_2$ are right-coprime implies $\ff = 1$. So $\gg \hh_1$ and $\hh_2$ are right-coprime, as expected.

\ITEM4 Assume that $\gg$ is $\SS_0$-minimal. Let $\gg_1 = \DL\gg$ and $\gg_2 = \NL\gg$. Assume $\gg_2 = \gg' \hh$ with $\hh$ in~$\SGp{\SS_0}$. Let $\gg' = \gg_1\inv \gg' = \gg \hh\inv$. The hypothesis that $\gg_1$ and~$\gg_2$ are left-coprime implies that $\gg_1$ and~$\gg'$ are left-coprime. Hence we have $\DL{\gg'} = \gg_1 = \DL\gg$ and $\NL{\gg'} = \gg'$. As we have $\gg' = \gg \hh\inv$ with $\hh\inv \in \SG{\SS_0}$ and $\LG{\DL{\gg'}} = \LG{\DL\gg}$, the hypothesis that $\gg$ is $\SS_0$ minimal implies $\LG{\NL{\gg'}} \ge \LG{\NL\gg}$, that is, $\LG{\gg'} \ge \LG{\gg_2}$. As we have $\LG{\gg_2} = \LG{\gg'} + \LG{\hh}$, the only possibility is $\LG{\hh} = 1$, that is, $\hh = 1$. Thus $\gg$ has no nontrivial right-divisor in~$\SG{\SS_0}$. By~\ITEM1, $\gg$ is $\SS_0$-minimal.
\end{proof}

\begin{lemm}
For every~$\SS_0$ included in~$\SS$, the family~$\TTT(\SS_0)$ is a $\SG{\SS_0}$-transversal in~$\GG$.
\end{lemm}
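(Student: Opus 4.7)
My plan is to verify separately that $1 \in \TTT(\SS_0)$, that every left-$\SG{\SS_0}$-coset contains at least one $\SS_0$-minimal element, and that it contains at most one.

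For the first two points I expect little resistance. The identity~$1$ satisfies $\DL{1} = \NL{1} = 1$, so for every $\hh \in \SG{\SS_0}$ the bound $\LG{\DL{\hh}} \geq 0 = \LG{\DL{1}}$ is trivial and, in case of equality, so is $\LG{\NL{\hh}} \geq 0 = \LG{\NL{1}}$; hence $1$ is $\SS_0$-minimal. For existence in the coset~$\gg \SG{\SS_0}$, I would observe that the map $\hh \mapsto (\LG{\DL{\gg\hh}}, \LG{\NL{\gg\hh}})$ takes values in $\mathbb{N}^2$, and since the lexicographic order on~$\mathbb{N}^2$ is a well-order, a minimum is attained at some~$\hh_0$; the element $\gg \hh_0$ is then $\SS_0$-minimal by construction.

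The main work is uniqueness. Suppose $\gg_1, \gg_2 \in \TTT(\SS_0)$ satisfy $\gg_2 = \gg_1 \hh$ with $\hh \in \SG{\SS_0}$. The key move is to write $\hh$ in \emph{right}-fraction form rather than left: using Proposition~\ref{P:Fraction} applied to the parabolic Artin--Tits subsystem on~$\SS_0$ (which is again of spherical type), there exist right-coprime $\hh_3, \hh_4 \in \SGp{\SS_0}$ with $\hh = \hh_3 \hh_4\inv$. This yields the equality $\gg_1 \hh_3 = \gg_2 \hh_4$ in~$\GG$, with the multiplicands $\hh_3, \hh_4$ both positive and lying in~$\SGp{\SS_0}$. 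Then Lemma~\ref{L:Minimal}\ITEM2, applied to $\gg_1$ with~$\hh_3$ and to $\gg_2$ with~$\hh_4$, asserts that $(\DL{\gg_1}, \NL{\gg_1}\hh_3)$ and $(\DL{\gg_2}, \NL{\gg_2}\hh_4)$ are each left-coprime pairs; hence these pairs give the reduced left-fractions of $\gg_1\hh_3 = \gg_2\hh_4$. The uniqueness clause of Proposition~\ref{P:Fraction} then forces both $\DL{\gg_1} = \DL{\gg_2}$ and $\NL{\gg_1}\hh_3 = \NL{\gg_2}\hh_4$, the latter being a pure identity in the monoid~$\GGp$.

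At this point I would invoke Lemma~\ref{L:Lcm}. By Lemma~\ref{L:Minimal}\ITEM4, $\NL{\gg_1}$ and $\NL{\gg_2}$ are themselves $\SS_0$-minimal, so by Lemma~\ref{L:Minimal}\ITEM1 neither is right-divisible by a nontrivial element of~$\SGp{\SS_0}$. Feeding the identity $\NL{\gg_1}\hh_3 = \NL{\gg_2}\hh_4$ into Lemma~\ref{L:Lcm} in each direction yields that $\hh_3$ right-divides $\hh_4$ and vice versa; cancellativity of $\GGp$ together with the absence of nontrivial invertible elements then gives $\hh_3 = \hh_4$, after which right-coprimeness forces $\hh_3 = \hh_4 = 1$. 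Hence $\hh = 1$ and $\gg_1 = \gg_2$. The conceptual obstacle, as I see it, is recognizing that passing to the \emph{right}-fraction of~$\hh$ (rather than the left one, which would leave the hypothesis of Lemma~\ref{L:Minimal}\ITEM2 unusable) is what allows the combinatorial content of Lemmas~\ref{L:Minimal} and~\ref{L:Lcm} to collapse the problem to a simple coprimeness statement in~$\SGp{\SS_0}$.
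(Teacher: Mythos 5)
Your proof is correct and takes essentially the same route as the paper's: existence via (lexicographic) minimization of $(\LG{\DL{\gg\hh}},\LG{\NL{\gg\hh}})$ over the coset, and uniqueness via a right-fraction $\hh=\hh_3\hh_4\inv$ over $\SGp{\SS_0}$, Lemma~\ref{L:Minimal}\ITEM2, the uniqueness of reduced left-fractions, and finally Lemma~\ref{L:Minimal}\ITEM4, \ITEM1 and Lemma~\ref{L:Lcm} to force $\hh_3=\hh_4$ and hence $\hh=1$. The only cosmetic difference is that you establish left-coprimality on both sides and apply the uniqueness clause of (the left counterpart of) Proposition~\ref{P:Fraction} once, whereas the paper applies Lemma~\ref{L:Minimal}\ITEM2 to one side at a time and uses the ``Moreover'' divisibility clause twice symmetrically to get $\DL\gg=\DL{\gg'}$ before cancelling.
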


\begin{proof}
Let $\CC$ be a left-$\SG{\SS_0}$-coset. Let~$\CC'$ be the set
$\{\gg \in \CC \mid \mbox{$\LG{\DL\gg}$ is minimal}\}$, and $\CC''$ be $\{\gg \in \CC' \mid \mbox{$\LG{\NL\gg}$ is minimal}\}$. By definition, $\CC''$ is nonempty, and every element of~$\CC''$ is an element of~$\CC$ that is $\SG{\SS_0}$-minimal. So every left-$\SG{\SS_0}$-coset contains at least one $\SS_0$-minimal element.

Assume now that $\gg$ and $\gg'$ belong to~$\TTT(\SS_0)$ and lie in the same left-$\SG{\SS_0}$-coset, say $\gg = \gg' \hh$ with $\hh \in \SG{\SS_0}$. Let $\gg_1, \gg_2$ be the left denominator and numerator of~$\gg$, and $\gg'_1, \gg'_2$ be those of~$\gg'$. Write $\hh = \hh_1 \hh_2\inv$ with $\hh_1, \hh_2$ in~$\SGp{\SS_0}$. As $\gg'$ is $\SS_0$-minimal, Lemma~\ref{L:Minimal}\ITEM2 implies that $\gg'_1$ and~$\gg'_2 \hh_2$ are left-coprime. Now we have $\gg_1\inv (\gg_2 \hh_1) = \gg'_1{}\inv (\gg'_2 \hh_2)$. As $\gg'_1$ and~$\gg'_2 \hh_2$ are left-coprime, $\gg'_1$ must right-divide~$\gg_1$. By a symmetric argument exchanging the roles of~$\gg$ and~$\gg'$ and those of~$\hh_1$ and~$\hh_2$, we see that $\gg_1$ must right-divide~$\gg'_1$. We deduce $\gg_1 = \gg'_1$, whence $\gg_2 \hh_1 = \gg'_2 \hh_2$. By Lemma~\ref{L:Minimal}\ITEM4, $\gg_2$ is $\SS_0$-minimal, so, by Lemma~\ref{L:Minimal}\ITEM1, $\gg_2$ is right-divisible by no nontrivial element of~$\SGp{\SS_0}$. Hence, by Lemma~\ref{L:Lcm}, $\hh_2$ right-divides~$\hh_1$. A symmetric argument shows that $\hh_1$ right-divides~$\hh_2$. We deduce $\hh_1 = \hh_2$, whence $\gg = \gg'$. So every left-$\SG{\SS_0}$-coset contains at most one $\SS_0$-minimal element.

Finally, it is clear that $1$ is $\SS_0$-minimal, hence $\TTT(\SS_0)$ contains~$1$.
\end{proof}

With these results at hand, we can complete the argument.

\begin{proof}[Proof of Proposition~\ref{P:SphericalP}]
We have seen that the only point that remains to be proved is the satisfaction of~\eqref{E:Proppp} for all~$\SS_0$ and~$\ww$. So, assume that $\SS_0$ are included in~$\SS$ and $\ww$ is a word of~$\WWW\SS$. Put $\gg = \cl\ww$. As $\TTT(\SS_0)$ is $\SG{\SS_0}$-transversal, we have $\gg = \gg' \hh$ for some (unique)~$\gg'$ in~$\TTT(\SS_0)$ and~$\hh$ in~$\SG{\SS_0}$. In~$\GG$, we have
\begin{equation}
\label{E:Spherical}
\DL\gg\inv \NL\gg \DR{\hh}
= \gg \DR{\hh} 
= \gg' \hh \DR{\hh}
= \DL{\gg'}\inv \NL{\gg'} \NR{\hh}.
\end{equation}
By hypothesis, $\gg'$ is $\SS_0$-minimal. Hence, by Lemma~\ref{L:Minimal}\ITEM2, $\DL{\gg'}$ and $\NL{\gg'} \NR{\hh}$ are left-coprime. So, by (the left counterpart of) Proposition~\ref{P:Fraction}, \eqref{E:Spherical} implies the existence of~$\ff$ in~$\GGp$ satisfying 
$$\DL\gg = \ff \DL{\gg'} \mbox{\quad and \quad} \NL\gg \DR{\hh} = \ff \NL{\gg'} \NR{\hh},$$
see Figure~\ref{F:SphericalPlus}. By Lemma~\ref{L:Minimal}\ITEM4, the hypothesis that $\gg'$ is $\SS_0$-minimal implies that $\NL{\gg'}$ is $\SS_0$-minimal and, then, 
Lemma~\ref{L:Minimal}\ITEM3 implies that $\NL{\gg'} \NR{\hh}$ and $\DR{\hh}$ are right-coprime.

Now, let $\ww_1, \ww_2$ be words of~$\WWWp\SS$ representing the left denominator and numerator of~$\gg$, let $\vv_1, \vv_2$ be words of~$\WWWp\SS$ representing the left denominator and numerator of~$\gg'$, and let~$\uu_1, \uu_2$ be words of~$\WWWp{\SS_0}$ representing the right numerator and denominator of~$\hh$. Finally, let $\uu$ represent~$\ff$. First, by (the left counterpart of) Lemma~\ref{L:Rev}, we have $\ww \gozotR \ww_1\inv \ww_2$. Next, $\NL\gg = \ff \NL{\gg'}$ implies $\ww_1 \eqpR \uu \vv_1$, whence $\ww_1\inv \gooR \vv_1\inv \uu\inv$ and $\ww \gozotR \vv_1\inv \uu\inv \ww_2$. Finally, we have $\ff\inv \NL\gg = \NL{\gg'} \NR{\hh} \DR{\hh}\inv$, and we saw above that $\NL{\gg'} \NR{\hh}$ and $\DR{\hh}$ are right-coprime, which implies that $\NL{\gg'} \NR{\hh}$ is the right-numerator of $\ff\inv \NL\gg$ and $\DR{\hh}$ is its right-denominator. By definition, $\uu\inv \ww_2$ represents~$\ff\inv \NL\gg$, whereas $\vv_2 \uu_1$ represents~$\NL{\gg'} \NR{\hh}$ and $\uu_2$ represents~$\DR{\hh}$. Lemma~\ref{L:Rev}\ITEM2 then implies $\uu\inv \ww_2 \gozotR \vv_2 \uu_1 \uu_2\inv$, whence $\ww \gozotR \vv_1\inv \vv_2 \uu_1 \uu_2\inv$, that is, $\ww \gozotR \vv \uu$ if we put $\vv = \vv_1\inv \vv_2$ and $\uu = \uu_1 \uu_2\inv$. Now, by construction, $\vv$ represents~$\gg'$, an element of~$\TTT(\SS_0)$, whereas $\uu$ is a word of~$\WWW{\SS_0}$. So \eqref{E:Proppp} is satisfied.
\end{proof}

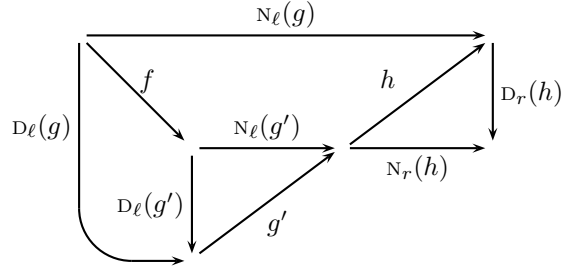
\begin{figure}[htb]
$$\begin{picture}(55,35)(0,0)
\pcline{->}(15,14)(15,1)
\tlput{$\DL{\gg'}$}
\pcline{->}(16,15)(34,15)
\taput{$\NL{\gg'}$}
\pcline{->}(36,15)(54,15)
\tbput{$\NR{\hh}$}
\pcline{->}(55,29)(55,16)
\trput{$\DR{\hh}$}
\pcline{->}(1,30)(54,30)
\taput{$\NL\gg$}
\pcline{->}(7,0)(14,0)
\psarc(7,7){7}{180}{270}
\pcline(0,29)(0,7)
\tlput{$\DL\gg$}
\pcline{->}(1,29)(14,16)
\put(8,23){$\ff$}
\pcline{->}(16,1)(34,14.5)
\put(25,4){$\gg'$}
\pcline{->}(36,15.5)(54,29)
\put(40,23){$\hh$}
\end{picture}$$
\caption{\sf\smaller Proof of Proposition~\ref{P:SphericalP}: the point is that $\DL{\gg'}$ and $\NL{\gg'} \NR{\hh}$ are left-coprime, and that $\NL{\gg'} \NR{\hh}$ and $\DR{\hh}$ are right-coprime.}
\label{F:SphericalPlus}
\end{figure}

%%%%

\subsection{Preservation under union}
\label{SS:Induction}

We shall now prove that Property~$\Propp$ is preserved under suitable unions. 

\begin{prop}
\label{P:Union} 
Assume that $(\SS_1, \RR_1)$ and $(\SS_2, \RR_2)$ are positive group presentations that are in amalgam position and satisfy Property~$\Propp$. Then $(\SS_1\cup\SS_2,\RR_1\cup \RR_2)$ satisfies Property~$\Propp$. 
\end{prop}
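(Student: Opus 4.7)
\medskip\noindent\emph{Proof plan.} Write $\SS = \SS_1 \cup \SS_2$, $\RR = \RR_1 \cup \RR_2$, $\SS_{12} = \SS_1 \cap \SS_2$, $\GG = \GR\SS\RR$, $\GG_\ii = \GR{\SS_\ii}{\RR_\ii}$ for $\ii = 1,2$, and $\GG_{12} = \SG{\SS_{12}}$; Lemma~\ref{L:Amalg} identifies~$\GG$ with the amalgamated product $\GG_1 *_{\GG_{12}} \GG_2$ and each~$\GG_\ii$ with the standard subgroup~$\SG{\SS_\ii} \le \GG$. Let $\TTT_1, \TTT_2$ be $\SS_\ii$-sequences of transversals witnessing Property~$\Propp$ for the two factors. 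A preliminary observation, used throughout, is that for every $\SS_0 \ince \SS$, Lemma~\ref{L:Subgroup}\ITEM1 applied inside each~$\GG_\ii$ together with the amalgam position hypothesis yields $\SG{\SS_0} \cong \SG{\SS_0 \cap \SS_1} *_{\SG{\SS_0 \cap \SS_{12}}} \SG{\SS_0 \cap \SS_2}$, so every standard subgroup of~$\GG$ itself carries a natural amalgam decomposition.

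The plan is to build an $\SS$-sequence of transversals~$\TTT$ in~$\GG$ from the~$\TTT_\ii$ via amalgam normal forms, then prove $\PPP_\TTT(\SS', \SS_0, \ww)$ by induction on the \emph{syllable length}~$\nu(\ww)$: the smallest~$\nn$ for which $\ww$ decomposes as $\ww^{(1)} \cdots \ww^{(\nn)}$ with each $\ww^{(\kk)}$ lying in $\WWW{\SS_1}$ or $\WWW{\SS_2}$. To define~$\TTT(\SS_0)$, pick in each left-$\SG{\SS_0}$-coset of~$\GG$ the distinguished representative whose left amalgam normal form, built from $\TTT_1(\SS_0 \cap \SS_{12})$ and $\TTT_2(\SS_0 \cap \SS_{12})$, is canonical, and arrange $1 \in \TTT(\SS_0)$. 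The selections must be coordinated so that, whenever $\SS_0 \ince \SS_\ii$, the restriction $\TTT(\SS_0) \cap \SG{\SS_\ii}$ equals $\TTT_\ii(\SS_0)$; amalgam position makes this coherence possible, because an element of~$\GG_{12}$ is assigned the same transversal partner regardless of which factor it is viewed through.

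For the induction, the base case $\nu(\ww) \le 1$ confines $\ww$ to a single~$\WWW{\SS_\ii}$: if $\cl\ww \in \SG{\SS'}$, Lemma~\ref{L:Subgroup}\ITEM1 inside~$\GG_\ii$ gives $\cl\ww \in \SG{\SS' \cap \SS_\ii}$, and Property~$\Propp$ for $(\SS_\ii, \RR_\ii)$ supplies witnesses $\vv, \uu$ that promote to $\TTT$-witnesses for the union by the compatibility just described, the relation $\ww \gozotR \vv\uu$ remaining valid in~$(\SS, \RR)$ because $\RR_\ii \ince \RR$. For the inductive step, split $\ww = \ww^{(1)} \ww^\circ$ with $\ww^{(1)} \in \WWW{\SS_\ii}$ the maximal initial factor and $\nu(\ww^\circ) < \nu(\ww)$. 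Apply the induction hypothesis to~$\ww^\circ$ with parameters $(\SS', \SS_{12})$ to obtain $\vv^\circ \in \WWW{\SS'}$ with $\cl{\vv^\circ} \in \TTT(\SS_{12})$ and $\uu^\circ \in \WWW{\SS' \cap \SS_{12}}$. Since $\uu^\circ$ then lies in $\WWW{\SS_\ii}$, merge it with~$\ww^{(1)}$ into a single $\SS_\ii$-syllable, normalize via Property~$\Propp$ in $(\SS_\ii, \RR_\ii)$, and iterate; this strictly lowers~$\nu$ and converges onto the base case.

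\emph{Main obstacle.} What makes the argument delicate is that Definition~\ref{D:Propp} demands not merely \emph{some} factorization $\vv\uu$ of the correct shape, but one in which~$\vv$ represents the \emph{specific} distinguished element of~$\TTT(\SS_0)$ in the coset of~$\cl\ww$. Preserving this invariant through each absorb-and-normalize round forces the construction of~$\TTT$ to respect the amalgam normal form on every~$\SG{\SS_0}$ simultaneously, and calls for careful bookkeeping of how $\WWW{\SS_{12}}$-syllables are shuttled across boundaries without crossing between distinct~$\SG{\SS_0}$-cosets. Amalgam position is the lever throughout: a word in $\WWW{\SS_{12}}$ represents the same element in~$\GG_1$, $\GG_2$, and~$\GG$, so transversal choices made on either side of a boundary glue consistently into a global normal form, and the local $\gozotR$-transformations supplied by Property~$\Propp$ for each factor paste together without interference.
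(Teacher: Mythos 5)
Your proposal has genuine gaps at the two places where the real work lies. First, the coherence you impose on the new transversals --- that $\TTT(\SS_0) \cap \SG{\SS_\ii} = \TTT_\ii(\SS_0 \cap \SS_\ii)$ for \emph{both} $\ii$, justified by saying that an element of $\GG_1 \cap \GG_2$ ``is assigned the same transversal partner regardless of which factor it is viewed through'' --- is not available: amalgam position only identifies the subgroups $\SG{\SS_1 \cap \SS_2}$ inside the two factors, while $\TTT_1$ and $\TTT_2$ are independently chosen witnesses and need not agree on $\GG_1 \cap \GG_2$. If they disagree on some coset contained in $\GG_1\cap\GG_2$, your two restriction conditions would force two distinct elements of one left-$\SG{\SS_0}$-coset into $\TTT(\SS_0)$, contradicting transversality. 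The paper's construction confronts exactly this: it defines $\TTT(\SS_0)$ by a condition on the \emph{tail} of the amalgam decomposition, systematically preferring $\TTT_1(\SS_0\cap\SS_1)$ on $\GG_1\cap\GG_2$, and then pays for the asymmetry with an explicit extra conversion step (case \ITEM2 of the paper's proof, where the factor-$2$ normalization lands in $\GG_1$ and must be renormalized through $\TTT_1$). Your base case silently relies on the impossible coherence. Moreover, picking ``the canonical representative'' in each coset by fiat makes transversality trivial but leaves untouched the hard point, namely that the representative reachable by $\gozotR$-normalization is unique per coset; in the paper this is Lemmas~\ref{L:Length} and~\ref{L:Transversal}, whose key step rules out a length-decreasing case by invoking $\PPP_{\TTT_\ie}(\SS_{1,2}, \SS_0\cap\SS_\ie, \cdot)$ of the factors --- nothing in your sketch plays this role.

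Second, the inductive step does not go through as written. You apply the induction hypothesis $\PPP_\TTT(\SS', \SS_1\cap\SS_2, \ww^\circ)$ to the suffix $\ww^\circ$, but that instance is vacuous unless $\cl{\ww^\circ} \in \SG{\SS'}$, which you do not know: only $\cl\ww$ is assumed to lie in $\SG{\SS'}$, and $\ww$ itself is an arbitrary word of $\WWW\SS$, so its suffix represents an uncontrolled element. And even granting the conclusion, the remainder $\uu^\circ$ sits at the \emph{right} end of $\vv^\circ \uu^\circ$, separated from the initial syllable $\ww^{(1)}$ by $\vv^\circ$; ``merging it with $\ww^{(1)}$'' would require transporting $\uu^\circ$ across $\vv^\circ$, a move that $\gozotR$ does not supply. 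Since Property~$\Propp$ places the $\SG{\SS_0}$-part on the right (left cosets), the recursion must be oriented the other way: split off the \emph{last} syllable and recurse on the prefix, so that the $\WWW{\SS_1\cap\SS_2}$-remainder of the processed prefix lands adjacent to the unprocessed syllable --- this is exactly the paper's Lemma~\ref{L:Pieces}, with the restriction of all pieces to the alphabet $\SS'$ handled afterwards (Lemmas~\ref{L:CanDec} and~\ref{L:PiecesBis}) rather than inside the word induction. Your preliminary observation that $\SG{\SS_0}$ is itself an amalgam is essentially correct (it amounts to the paper's Lemma~\ref{L:CanDec} and can be justified from Lemma~\ref{L:Subgroup}\ITEM1 in the factors), but the directional error and the missing hypothesis in the recursion, together with the transversal issue above, leave the argument incomplete.
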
 

As explained in Subsection~\ref{SS:Propp}, this result together with Proposition~\ref{P:SphericalP} implies that every Artin--Tits presentation of FC type satisfies Property~$\Propp$, hence a fortiori Property~$\Prop$, thus completing the proof of Proposition~\ref{P:FC}.

We split the proof of Proposition~\ref{P:Union} into several steps. Until the end of the section, we assume that $(\SS_1,\RR_1)$ and $(\SS_2,\RR_2)$ are fixed positive presentations that are in amalgam position and satisfy Property~$\Propp$. For~$\ie = 1,2$, we denote by~$\GG_\ie$ the group~$\GR{\SS_\ie}{\RR_\ie}$, we write~$\GG_{1,2}$ for~$\GG_1 \cap \GG_2$, and~$\GG$ for the amalgamated product $\GG_1 *_{\GG_{1,2}} \GG_2$. Thus, if we put $\SS = \SS_1\cup \SS_2$, $\RR = \RR_1\cup \RR_2$, and $\SS_{1,2} = \SS_1\cap \SS_2$, then, by hypothesis, we have $\GG = \GR\SS\RR$ and $\GG_{1,2} = \SG{\SS_{1,2}}$ (both in the sense of~$\GG_1$ and of~$\GG_2$). Next, we fix an $\SS_\ie$-sequence of transversals~$\TTT_\ie$ witnessing that $(\SS_\ie, \RR_\ie)$ satisfies Property~$\Propp$, and write $\PPPPe{\SS'}{\SS''}\ww$ for $\PPP_{\TTT_\ie}(\SS', \SS'', \ww)$. 

For~$\gg$ in~$\GG_1 \bigtriangleup \GG_2$ (that is, in $\GG_1 \setminus \GG_2$ or in~$\GG_2 \setminus \GG_1$), we denote by~$\oo\gg$ the index~$\ie$ such that $\gg$ lies in~$\GG_\ie$. It follows from Lemma~\ref{L:Amalg} that, for every element~$\gg$ of~$\GG$, there exists a unique sequence $(\gg_1 \wdots \gg_\rr, \ggs)$ such that $\gg = \gg_1 \pdots \gg_\rr \ggs$ holds, for every~$\ii$ the element~$\gg_\ii$ is a nontrivial element of~$\TTT_1(\SS_{1,2}) \cup \TTT_2(\SS_{1,2})$ with $\oo{\gg_\ii} \not= \oo{\gg_{\ii+1}}$, and $\ggs$ is an element of~$\GG_{1,2}$. 

\begin{defi}
\label{D:CanDec}
In the above context, the sequence~$(\gg_1 \wdots \gg_\rr, \ggs)$ is called the \emph{amalgam decomposition} of~$\gg$ and denoted by~$\AD\gg$. The number~$\rr$ is called the \emph{amalgam length} of $\gg$ and denoted by~$\AL\gg$. Finally, the \emph{tail}~$\tail\gg$ of~$\gg$ is defined to be~$\gg_\rr \ggs$ for $\rr \ge 1$, and to be~$\ggs$ (that is, $\gg$) for $\rr = 0$. 
\end{defi}

(Note that $\tail\gg$ is always $\gg_\rr \ggs$ if $\AD\gg$ is extended with~$\gg_\ii = 1$ for $\ii \le 0$.) We begin with a natural result saying that, if a word~$\ww$ represents an element~$\gg$ of~$\GG$, then, by using special transformations, we can go from~$\ww$ to a concatenation of words that represent the successive entries of the amalgam decomposition of~$\gg$. So, roughly speaking, the amalgam decomposition can be computed using special transformations. 

\begin{lemm}
\label{L:Pieces} 
Assume that $\ww$ is a word of~$\WWW\SS$ and $(\gg_1 \wdots \gg_\rr, \ggs)$ is the amalgam decomposition of~$\cl\ww$. Then there exist words~$\ww_1 \wdots \ww_\rr, \wws$ such that $\ww \gozotR \ww_1 \pdots \ww_\rr \wws$ holds, $\ww_\ii$ lies in~$\WWW{\SS_{\oo{\gg_\ii}}}$ and represents~$\gg_\ii$ for $\ii = 1 \wdots \rr$, and $\wws$ lies in~$\WWW{\SS_{1,2}}$ and represents~$\ggs$.
\end{lemm}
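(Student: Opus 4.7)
The plan is induction on the length~$|\ww|$. If $\ww = \ew$, then $\cl\ww = 1$ has empty amalgam decomposition with $\ggs = 1$, and $\wws := \ew$ works. For the inductive step, write $\ww = \vv a$ where~$a$ is a single letter of~$\SS_\ie \cup \SS_\ie\inv$ for some~$\ie \in \{1,2\}$. Apply the induction hypothesis to~$\vv$ to produce $\vv \gozotR \bar\ww_1 \pdots \bar\ww_{\bar\rr} \bar\wws$ matching $\AD{\cl\vv} = (\bar\gg_1 \wdots \bar\gg_{\bar\rr}; \bar\ggs)$, and append~$a$ to get $\ww \gozotR \bar\ww_1 \pdots \bar\ww_{\bar\rr} \bar\wws a$.

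The remaining task is an iterative ``absorption from the right''. Set initially $j := \bar\rr$ and $\hat\ww := \bar\wws a$ when $\bar\rr = 0$ or $\oo{\bar\gg_{\bar\rr}} \ne \ie$, otherwise $j := \bar\rr - 1$ and $\hat\ww := \bar\ww_{\bar\rr} \bar\wws a$. In both situations $\hat\ww$ lies in~$\WWW{\SS_\ie}$, we have $\ww \gozotR \bar\ww_1 \pdots \bar\ww_j \hat\ww$, and $\oo{\bar\gg_j} \ne \ie$ whenever $j \ge 1$. At each iteration, apply $\PPPPe{\SS_\ie}{\SS_{1,2}}{\hat\ww}$ to obtain $\hat\ww \gozotRe \hat\vv \hat\uu$ with $\cl{\hat\vv} \in \TTT_\ie(\SS_{1,2})$ and $\hat\uu \in \WWW{\SS_{1,2}}$, then branch on whether $\cl{\hat\vv}$ is trivial. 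If $\cl{\hat\vv} \ne 1$, the iteration stops: since a non-trivial element of a $\GG_{1,2}$-transversal lies in~$\GG_\ie \setminus \GG_{1,2}$, we have $\oo{\cl{\hat\vv}} = \ie \ne \oo{\bar\gg_j}$, and uniqueness of amalgam decompositions identifies $(\bar\gg_1 \wdots \bar\gg_j, \cl{\hat\vv}; \cl{\hat\uu})$ with~$\AD{\cl\ww}$, giving the desired form. If $\cl{\hat\vv} = 1$, Property~$\Prop$ for~$(\SS_\ie, \RR_\ie)$ (granted by Property~$\Propp$ via Lemma~\ref{L:Implies}) reduces $\hat\vv \gozotRe \ew$, hence $\ww \gozotR \bar\ww_1 \pdots \bar\ww_j \hat\uu$; if $j = 0$ we stop with $\wws := \hat\uu$, otherwise we restart with $j' := j - 1$, $\ie' := \oo{\bar\gg_j}$, and $\hat\ww' := \bar\ww_j \hat\uu \in \WWW{\SS_{\ie'}}$, which preserves all invariants thanks to the alternation property of amalgam decompositions.

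Termination of the iteration is immediate because~$j$ strictly decreases in the trivial-head case and is bounded below by~$0$, so after at most $\bar\rr + 1$ steps we either hit the non-trivial branch or reach $j = 0$. The main subtle point of the proof is preserving the alternation invariant of amalgam decompositions: it rests on the fact that the only element of a $\GG_{1,2}$-transversal belonging to~$\GG_{1,2}$ is~$1$ itself, so that a non-trivial output~$\cl{\hat\vv}$ from Property~$\Propp$ lies genuinely outside~$\GG_{1,2}$, hence has side exactly~$\ie$, which by the maintained invariant differs from~$\oo{\bar\gg_j}$.
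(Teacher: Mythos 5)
Your argument is correct and rests on essentially the same mechanism as the paper's proof: peel off the rightmost one-factor suffix, normalize it through the instance $\PPPPe{\SS_\ie}{\SS_{1,2}}{\cdot}$ of Property~$\Propp$ (falling back on Lemma~\ref{L:Subgroup}\ITEM2, or as you do on Property~$\Prop$ via Lemma~\ref{L:Implies}, when that suffix represents an element of~$\GG_{1,2}$), and identify the outcome with~$\AD{\cl\ww}$ by uniqueness of the amalgam decomposition. The only divergence is bookkeeping: the paper inducts on the number of alternating $\SS_1$/$\SS_2$ syllables and appends a whole syllable, so a fixed four-case analysis suffices, whereas you append a single letter and run a decreasing-$j$ absorption loop---which is fine, and in fact your trivial-head branch can fire at most once in a row, since after one pass the new suffix represents $\gg_j$ times an element of~$\GG_{1,2}$, hence lies outside~$\GG_{1,2}$ because $\gg_j$ is a nontrivial transversal element.
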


\begin{proof}
As $\SS$ is the union of~$\SS_1$ and~$\SS_2$, every word of~$\WWW\SS$ can be expressed as a concatenation of words in~$\WWW{\SS_1}$ and in~$\WWW{\SS_2}$, say $\ww_1 \pdots \ww_\pp$ with $\ww_\ii \in \WWW{\SS_{\ie_\ii}}$ and $\ie_\ii \not= \ie_{\ii+1}$. For~$\ww$ in~$\WWW\SS$, we denote by~$\AN\ww$ the minimal value of~$\pp$ for which such a decomposition of~$\ww$ exists. Then we prove the result using induction on~$\AN\ww$.

Assume first $\AN\ww = 1$. Then $\ww$ belongs to~$\WWW{\SS_\ie}$ for $\ie = 1$ or~$2$. Two subcases are possible. Assume first $\cl\ww \in \GG_{1,2}$. Then we have $\rr = 0$, with $\AD{\cl\ww} = (\gg)$. By Lemma~\ref{L:Subgroup}\ITEM2, there exists~$\wws$ in~$\WWW{\SS_{1,2}}$ satisfying $\ww \gozotRo \wws$. A fortiori, we have $\ww \gozotR \wws$, whence $\cl{\wws} = \cl{\ww}$, and $\cl{\wws} = \gg = \ggs$, as expected. Assume now $\cl\ww \notin \GG_{1,2}$. Then, we have $\rr = 1$. By~$\PPPPe{\SS_\ie}{\SS_{1,2}}\ww$, there exists~$\ww_1$ in~$\WWW{\SS_{\ie}}$ and $\wws$ in~$\WWW{\SS_{1,2}}$ satisfying $\ww \gozotRe \ww_1 \wws$ and $\cl{\ww_1} \in \TTT_\ie(\SS_{1,2})$. A fortiori, we have $\ww \gozotR \ww_1 \wws$, whence $\cl{\ww_1 \wws} = \cl{\ww}$. As, by construction, $\cl{\wws}$ belongs to~$\GG_{1,2}$, the uniqueness of the amalgam decomposition implies $\cl{\ww_1} = \gg_1$ and $\cl{\wws} = \ggs$, as expected again.

Assume now $\AN\ww \ge 2$. Write $\ww = \vv \uu$ with $\AN\vv = \AN\ww-1$ and $\AN\uu = 1$. Then $\uu$ belongs to~$\WWW{\SS_\ie}$ for $\ie = 1$ or~$2$. Let $(\ff_1 \wdots \ff_\qq, \ffs)$ be the amalgam decomposition of~$\cl\vv$. By induction hypothesis, there exist words $\vv_1 \wdots \vv_\qq, \vvs$ such that $\vv \gozotR \vv_1 \pdots \vv_\qq \vvs$ holds, $\vv_\ii$ lies in~$\WWW{\SS_{\oo{\ff_\ii}}}$ and represents~$\ff_\ii$ for $\ii = 1 \wdots \qq$, and $\vvs$ lies in~$\WWW{\SS_{1,2}}$ and represents~$\ffs$. We consider the various ways $\AD{\cl\ww}$ can be obtained from~$\AD{\cl\vv}$ and from~$\cl\uu$, hereafter denoted by~$\hh$, an element of~$\GG_\ie$.

We consider $\ffs \hh$, which, by hypothesis, lies in~$\GG_\ie$, and its word representative $\vvs \uu$, which lies in~$\WWW{\SS_\ie}$ and satisfies $\AN{\vvs \uu} = 1$. Assume first $\hh \in \GG_{1,2}$. In this case, $\ffs \hh$ lies in~$\GG_{1,2}$ as well, and we have $\rr = \qq$ with $\AD{\cl\ww} = (\ff_1 \wdots \ff_\qq, \ffs \hh)$. By induction hypothesis (or by Lemma~\ref{L:Subgroup}\ITEM2 applied in~$\GR{\SS_\ie}{\RR_\ie}$), there exist a word~$\uu_*$ in~$\WWW{\SS_{1,2}}$ such that $\vvs \uu \gozotR \uu_*$ holds and, therefore, $\uu_*$ represents~$\ffs \uu$. Then we have $\ww \gozotR \vv_1 \pdots \vv_\qq \uu_*$, and the words $\vv_1 \wdots \vv_\qq, \uu_*$ have the expected properties. 

Assume now $\hh \in \GG_\ie \setminus \GG_{1,2}$ with $\qq = 0$ or $\oo{\ff_\qq} \not= \ie$. In this case, $\ffs \hh$ lies in~$\GG_\ie \setminus \GG_{1,2}$, and we have $\rr = \qq+1$ with $\AD{\cl\ww} = (\ff_1 \wdots \ff_\qq, \hh_1, \hh_*)$ where $(\hh_1, \hh_*)$ is the amalgam decomposition of~$\ffs \hh$. Now, as above, we have $\AN{\vvs \uu} = 1$, so, by induction hypothesis, there exist words~$\uu_1, \uu_*$ such that $\vvs \uu \gozotRe \uu_1 \uu_*$ holds, $\uu_1$ lies in~$\WWW{\SS_\ie}$ and represents~$\hh_1$, and $\uu_*$ lies in~$\WWW{\SS_{1,2}}$ and represents~$\hh_*$. Then we have $\ww \gozotR \vv_1 \pdots \vv_\qq \vvs \uu \gozotR \vv_1 \pdots \vv_\qq \uu_1 \uu_*$, and the words~$\vv_1 \wdots \vv_\qq, \uu_1, \uu_*$ have the expected properties.

Assume now $\hh \in \GG_\ie \setminus \GG_{1,2}$ with $\qq \ge 1$ and $\oo{\ff_\qq} = \ie$. Then we consider $\ff_\qq \ffs \hh$, which, by hypothesis, lies in~$\GG_\ie$, and its word representative $\vv_\qq \vvs \uu$: by hypothesis, $\vv_\qq$ lies in~$\WWW{\SS_{\oo{\ff_\qq}}}$, which is~$\WWW{\SS_\ie}$, so $\vv_\qq \vvs \uu$ lies in~$\WWW{\SS_\ie}$ and we have $\AN{\vv_\qq \vvs \uu} = 1$. As above in the case of~$\vvs \uu$, two cases arise. Assume first $\ff_\qq \ffs \hh \in \GG_{1,2}$. In this case, we have $\rr = \qq - 1$ and $\AD\gg = (\ff_1 \wdots \ff_{\qq-1}, \ff_\qq \ffs \hh)$. By induction hypothesis (or by Lemma~\ref{L:Subgroup}), there exists~$\uu_*$ of~$\WWW{\SS_{1,2}}$ satisfying $\vv_\qq \vvs \uu \gozotR \uu_*$ and, therefore, $\uu_*$ represents~$\ff_\qq \ffs \uu$. Then we have $\ww \gozotR \vv_1 \pdots \vv_\qq \vvs \uu \gozotR \vv_1 \pdots \vv_{\qq-1} \uu_*$, and $\vv_1 \wdots \vv_{\qq-1}, \uu_*$ have the expected properties.

Assume finally $\ff_\qq \ffs \hh \in \GG_\ie \setminus \GG_{1,2}$. Then we have $\rr = \qq$ with $\AD\gg = (\ff_1 \wdots \ff_{\qq-1}, \hh_1, \hh_*)$ where $(\hh_1, \hh_*)$ is the amalgam decomposition of~$\ff_\qq \ffs \hh$. As above, we have $\AN{\vv_\qq \vvs \uu} =\nobreak 1$ so, by induction hypothesis, there exist $\uu_1$ and~$\uu_*$ such that $\vvs \uu \gozotR \uu_1 \uu_*$ holds, $\uu_1$ lies in~$\WWW{\SS_\ie}$ and represents~$\hh_1$, and $\uu_*$ lies in~$\WWW{\SS_{1,2}}$ and represents~$\hh_*$. Then we have $\ww \gozotR \vv_1 \pdots \vv_\qq \vvs \uu \gozotR \vv_1 \pdots \vv_{\qq-1} \uu_1 \uu_*$, and $\vv_1 \wdots \vv_{\qq-1}, \uu_1, \uu_*$ have the expected properties.
\end{proof}

\begin{lemm}
\label{L:CanDec}
Assume that $\SS'$ is included in~$\SS$ and $\gg$ lies in~$\SG{\SS'}$. Then all entries of the amalgam decomposition of~$\gg$ lie in~$\SG{\SS'}$. More precisely, if $\AD\gg$ is $(\gg_1 \wdots \gg_\rr, \ggs)$, then $\gg_\ii$ lies in~$\SG{\SS' \cap \SS_{\oo{\gg_\ii}}}$ for $\ii = 1 \wdots \rr$, and $\ggs$ lies in~$\SG{\SS' \cap \SS_{1,2}}$.
\end{lemm}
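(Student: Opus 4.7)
The plan is to argue by induction on the length of a word $\ww$ in $\WWW{\SS'}$ representing $\gg$. When $\LG\ww = 0$, we have $\gg = 1$, the amalgam decomposition is empty with $\ggs = 1$, and the conclusion holds trivially. For the inductive step, write $\ww = \ww' \ss^\epsilon$ with $\ss \in \SS'$ and $\epsilon \in \{+1,-1\}$, and set $\gg' = \cl{\ww'}$, which still lies in $\SG{\SS'}$ since $\ww' \in \WWW{\SS'}$. The induction hypothesis applied to $\gg'$ gives that every entry of $\AD{\gg'} = (\gg'_1 \wdots \gg'_{\rr'}, \gg'_*)$ lies in the expected subgroup generated by letters of $\SS'$.

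Next, let $\ie \in \{1,2\}$ be such that $\ss \in \SS_\ie$, and introduce the auxiliary element $\kk := \gg'_{\rr'} \gg'_* \ss^\epsilon$ if $\rr' \ge 1$ and $\oo{\gg'_{\rr'}} = \ie$, and $\kk := \gg'_* \ss^\epsilon$ otherwise. In either case $\kk$ lies in $\GG_\ie$, and combining the induction hypothesis with $\ss \in \SS' \cap \SS_\ie$ shows $\kk \in \SG{\SS' \cap \SS_\ie}$ (this subgroup being the same whether computed in $\GG_\ie$ or in $\GG$). The crucial step is then to recover the amalgam decomposition of $\kk$ by invoking Property~$\Propp$ for $(\SS_\ie, \RR_\ie)$: pick any word $\ww_*$ in $\WWW{\SS_\ie}$ representing $\kk$, and apply $\PPPPe{\SS' \cap \SS_\ie}{\SS_{1,2}}{\ww_*}$. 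This yields words $\vv \in \WWW{\SS' \cap \SS_\ie}$ and $\uu \in \WWW{\SS' \cap \SS_{1,2}}$ with $\ww_* \gozotRe \vv\uu$ and $\cl\vv \in \TTT_\ie(\SS_{1,2})$. Since $\cl\vv \cdot \cl\uu = \kk$ with $\cl\uu \in \GG_{1,2}$ and $\cl\vv$ in the transversal, the pair $(\cl\vv, \cl\uu)$ is by uniqueness the amalgam decomposition of $\kk$ in $\GG$ (with $\cl\vv = 1$ exactly when $\kk \in \GG_{1,2}$).

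Finally, concatenating this decomposition of $\kk$ with the prefix $(\gg'_1 \wdots \gg'_{\rr'-1})$ in the merging case $\oo{\gg'_{\rr'}} = \ie$, or with $(\gg'_1 \wdots \gg'_{\rr'})$ otherwise, and discarding $\cl\vv$ when it equals~$1$, produces a sequence that satisfies the alternation condition and whose tail lies in $\GG_{1,2}$; uniqueness of the amalgam decomposition identifies this sequence with $\AD\gg$. Since by construction $\cl\vv \in \SG{\SS' \cap \SS_\ie}$ and $\cl\uu \in \SG{\SS' \cap \SS_{1,2}}$, all entries satisfy the claimed memberships. The main obstacle is not algebraic but organizational: four sub-cases must be enumerated (according to whether $\oo{\gg'_{\rr'}} = \ie$ and whether $\cl\vv$ turns out to be trivial), and in each the alternation property and the identity of the new tail must be checked. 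The whole algebraic content, however, reduces to a single call to Property~$\Propp$ for $(\SS_\ie, \RR_\ie)$ once the subgroup membership $\kk \in \SG{\SS' \cap \SS_\ie}$ has been verified.
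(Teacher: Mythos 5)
Your proof is correct and follows essentially the same route as the paper: an induction in which the updated tail is handled by one call to $\PPPPe{\SS' \cap \SS_\ie}{\SS_{1,2}}{\cdot}$, followed by the uniqueness of the amalgam decomposition to identify the resulting sequence with $\AD\gg$. The only difference is cosmetic: you induct on the length of a word of~$\WWW{\SS'}$ representing~$\gg$ (one letter at a time), whereas the paper inducts on the alternation length~$\ALL{\SS'}\gg$ (one block with $\ALL{\SS'}{\hh}=1$ at a time, following the case scheme of Lemma~\ref{L:Pieces}), and both case analyses and memberships come out the same.
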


\begin{proof}
As $\SS'$ is the union of~$\SS' \cap \SS_1$ and~$\SS' \cap \SS_2$, every element of~$\SG{\GG'}$ can be expressed as a product of elements of~$\SG{\SS' \cap \SS_1}$ and~$\SG{\SS' \cap \SS_2}$, say $\gg_1 \pdots \gg_\pp$ with $\gg_\ii \in \SG{\SS' \cap \SS_{\ie_\ii}}$ and $\ie_\ii \not= \ie_{\ii+1}$. For~$\gg$ in~$\SG{\SS'}$, we define~$\ALL{\SS'}{\gg}$ to be the minimal value of~$\pp$ for which such a decomposition of~$\gg$ exists. Then we prove the result using induction on~$\ALL{\SS'}\gg$.

Assume first $\ALL{\SS'}\gg = 1$. Then $\gg$ belongs to~$\GG_\ie$ for~$\ie = 1$ or~$2$, and $\AL\gg \le 1$ holds. If $\gg$ lies in~$\GG_{1,2}$, the amalgam decomposition of~$\gg$ is~$(\gg)$, and $\gg$ lies in~$\SG{\SS'}$. Assume now $\gg \notin \GG_{1,2}$. Let $\ww$ be a word of~$\WWW\SS$ that represents~$\gg$. By $\PPPPe{\SS' \cap \SS_\ie}{\SS_{1,2}}\ww$, we find $\ww_1$ in~$\WWW{\SS' \cap \SS_\ie}$ and $\wws$ in~$\WWW{\SS' \cap \SS_{1,2}}$ satisfying $\ww \gozotRe \ww_1 \wws$ and $\cl{\ww_1} \in \TTT_\ie(\SS_{1,2})$. By uniqueness, the amalgam decomposition of~$\gg$ must be $(\cl{\ww_1}, \cl{\wws})$ and, by construction, $\cl{\ww_1}$ lies in~$\SG{\SS' \cap \SS_\ie}$ and $\cl{\wws}$ lies in~$\SG{\SS' \cap \SS_{1,2}}$.

Assume now $\ALL{\SS'}\gg \ge 2$. Write $\gg = \ff \hh$ with~$\ff, \hh$ in~$\SG{\SS'}$ satisfying $\ALL{\SS'}\ff < \ALL{\SS'}\gg$ and $\ALL{\SS'}\hh = 1$. Let $(\ff_1 \wdots \ff_\qq, \ffs)$ be the amalgam decomposition of~$\ff$. By induction hypothesis, $\ff_1 \wdots \ff_\qq$ and~$\ffs$ lie in~$\SG{\SS'}$.
Then, according to the scheme used in the proof of Lemma~\ref{L:Pieces}, the amalgam decomposition of~$\gg$ is of one of the types

\ITEM1\quad $(\ff_1 \wdots \ff_\qq, \ffs \hh)$,

\ITEM2\quad $(\ff_1 \wdots \ff_\qq, \hh_1, \hh_*)$ \quad with $\ALL{\SS'}{\ffs \hh} = 1$ and $(\hh_1, \hh_*) = \AD{\ffs \hh}$,

\ITEM3\quad $(\ff_1 \wdots \ff_{\qq-1}, \ff_\qq \ffs \hh)$, 

\ITEM4\quad $(\ff_1 \wdots \ff_{\qq-1}, \hh_1, \hh_*)$ \quad with $\ALL{\SS'}{\ff_\qq \ffs \hh} = 1$ and $(\hh_1, \hh_*) = \AD{\ff_\qq \ffs \hh}$.

\noindent By hypothesis, all involved elements, in particular $\hh_1$ and~$\hh_*$ in cases~\ITEM2 and~\ITEM4 owing to the induction hypothesis, lie in~$\SG{\SS' \cap \SS_1}$, $\SG{\SS' \cap \SS_2}$, or~$\SG{\SS' \cap \SS_{1,2}}$ as expected.
\end{proof}

Using Lemmas~\ref{L:CanDec} and~\ref{L:Subgroup}\ITEM2, we can obtain a more precise version of Lemma~\ref{L:Pieces}.

\begin{lemm}
\label{L:PiecesBis} 
Assume that $\SS'$ is included in~$\SS$ and $\ww$ is a word of~$\WWW\SS$ that represents an element of~$\SG{\SS'}$. Let $(\gg_1 \wdots \gg_\rr, \ggs)$ be the amalgam decomposition of~$\cl\ww$. Then there exist words $\ww'_1 \wdots \ww'_\rr, \wws'$ such that $\ww \gozotR \ww'_1 \pdots \ww'_\rr \wws'$ holds, $\ww'_\ii$ lies in~$\WWW{\SS' \cap \SS_{\oo{\gg_\ii}}}$ and represents~$\gg_\ii$ for $\ii = 1 \wdots \rr$, and $\wws'$ lies in~$\WWW{\SS' \cap \SS_{1,2}}$ and represents~$\ggs$.
\end{lemm}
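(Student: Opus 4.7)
The plan is to refine the output of Lemma~\ref{L:Pieces} using the subgroup information provided by Lemma~\ref{L:CanDec} together with the alphabet-restriction conclusion of Lemma~\ref{L:Subgroup}\ITEM2 applied \emph{inside each factor presentation}. First, I would apply Lemma~\ref{L:Pieces} to the given word~$\ww$ to produce an intermediate decomposition $\ww \gozotR \ww_1 \pdots \ww_\rr \wws$ with $\ww_\ii \in \WWW{\SS_{\oo{\gg_\ii}}}$ representing~$\gg_\ii$ for each~$\ii$ and $\wws \in \WWW{\SS_{1,2}}$ representing~$\ggs$. This gives the correct global shape of the target decomposition, but with the pieces still living in the larger alphabets $\SS_{\oo{\gg_\ii}}$ and $\SS_{1,2}$ rather than in their intersections with~$\SS'$.

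Next, I would invoke Lemma~\ref{L:CanDec} under the hypothesis $\cl\ww \in \SG{\SS'}$ to conclude that each piece satisfies $\gg_\ii \in \SG{\SS' \cap \SS_{\oo{\gg_\ii}}}$ and that $\ggs \in \SG{\SS' \cap \SS_{1,2}}$. Since $(\SS_1, \RR_1)$ and $(\SS_2, \RR_2)$ are in amalgam position, the subgroup~$\SG{\SS_\ie}$ of~$\GG$ is canonically isomorphic to the factor~$\GG_\ie$, so each of these subgroup memberships can equivalently be read inside the corresponding factor group, where the notion of reversing with respect to~$\RR_\ie$ is available.

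The last step is to apply Lemma~\ref{L:Subgroup}\ITEM2 inside each factor presentation, which is licit because $(\SS_\ie, \RR_\ie)$ satisfies Property~$\Propp$ by hypothesis: for each~$\ii$ it shrinks~$\ww_\ii$ to a word $\ww'_\ii \in \WWW{\SS' \cap \SS_{\oo{\gg_\ii}}}$ with $\ww_\ii \gozotR \ww'_\ii$ computed relative to~$\RR_{\oo{\gg_\ii}}$, and likewise, applied inside $(\SS_1, \RR_1)$ with the subset $\SS' \cap \SS_{1,2} \ince \SS_1$, it shrinks~$\wws$ to a word $\wws' \in \WWW{\SS' \cap \SS_{1,2}}$ with $\wws \gozotR \wws'$ computed relative to~$\RR_1$. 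Assembling these local reversings then gives $\ww \gozotR \ww_1 \pdots \ww_\rr \wws \gozotR \ww'_1 \pdots \ww'_\rr \wws'$, which is the desired conclusion.

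I do not expect a genuine obstacle here: the statement is essentially bookkeeping on top of Lemmas~\ref{L:Pieces} and~\ref{L:CanDec}. The only point that merits a brief verification is the transfer between the factor presentations and the global one, namely that $\RR_\ie \ince \RR$ guarantees that every special transformation relative to~$\RR_\ie$ is also a special transformation relative to~$\RR$, and that reversing commutes with concatenation in the obvious way so that the local reversings of~$\ww_\ii$ and~$\wws$ assemble into a single reversing of the concatenation.
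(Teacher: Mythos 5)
Your proposal is correct and follows essentially the same route as the paper: apply Lemma~\ref{L:Pieces}, use Lemma~\ref{L:CanDec} to place each piece $\gg_\ii$ and $\ggs$ in the appropriate subgroup $\SG{\SS'\cap\SS_{\oo{\gg_\ii}}}$ resp.\ $\SG{\SS'\cap\SS_{1,2}}$, then shrink each word via Lemma~\ref{L:Subgroup}\ITEM2 in the factor presentations (which satisfy Property~$\Propp$ by hypothesis) and concatenate the resulting reversings. Your extra remarks on the transfer between $\RR_\ie$ and $\RR$ and the compatibility of $\gozotR$ with concatenation are points the paper leaves implicit, so they only add precision.
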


The difference with Lemma~\ref{L:Pieces} is that, here, we assume that the words~$\ww'_\ii$ and~$\wws'$ lie in~$\WWW{\SS'}$, that is, involve letters from~$\SS'$ only.

\begin{proof}
By Lemma~\ref{L:Pieces}, we first find $\ww_1 \wdots \ww_\rr, \wws$ that represent~$\gg_1 \wdots \gg_\rr, \ggs$. Now, by Lemma~\ref{L:CanDec}, the elements $\gg_1 \wdots \gg_\rr, \ggs$ lie in~$\SG{\SS'}$. Then Lemma~\ref{L:Subgroup}\ITEM2 implies the existence of $\ww'_1 \wdots \ww'_\rr, \wws'$ that satisfy $\ww_\ii \gozotR \ww'_\ii$ for $\ii = 1 \wdots \rr$ and $\wws \gozotR \wws'$, and all lie in~$\WWW{\SS'}$ in addition to the other properties. Then we have $\ww \gozotR \ww_1 \pdots \ww_\rr \wws \gozotR \ww'_1 \pdots \ww'_\rr \wws'$.
\end{proof}

Our aim is to prove that $(\SS, \RR)$ satisfies Property~$\Propp$ and, therefore, we must construct, for every subset~$\SS_0$ of~$\SS$, a distinguished $\SG{\SS_0}$-transversal~$\TTT(\SS_0)$ in the group~$\GG$. To this end, we use the transversals~$\TTT_1(\SS_0 \cap \SS_1)$ and~$\TTT_2(\SS_0 \cap \SS_2)$. The natural idea is that $\TTT(\SS_0)$ consists of those elements of~$\GG$ whose tail belongs to~$\TTT_\ie(\SS_0 \cap \SS_\ie)$ for~$\ie = 1$ or~$2$---we recall from Definition~\ref{D:CanDec} that the tail~$\tail\gg$ of~$\gg$ is the maximal final chunk of~$\gg$ that lies in~$\GG_1$ or~$\GG_2$. To avoid the ambiguities resulting from the fact that, on~$\GG_1 \cap \GG_2$, the transversals $\TTT_1(\SS_0 \cap \SS_1)$ and $\TTT_2(\SS_0 \cap \SS_2)$ need not coincide, we systematically choose $\TTT_1(\SS_0 \cap \SS_1)$ whenever possible. 

\begin{defi}
For~$\SS_0$ included in~$\SS$, we put 
\begin{equation}
\TTT(\SS_0) = \{ \gg \in \GG \mid \tail\gg \in \TTT_1(\SS_0 \cap \SS_1) \cup (\TTT_2(\SS_0 \cap \SS_2) \setminus \GG_1) \}.
\end{equation}
\end{defi}

We shall show that $\TTT$ is an $\SS$-sequence of transversals witnessing for Property~$\Propp$. Note that, the hypothesis that $\TTT_1(\SS_0 \cap \SS_1)$ contains~$1$ implies that $\TTT(\SS_0)$ contains~$1$ as well. We begin with two preparatory results.

\begin{lemm}
\label{L:Length}
Assume that $\SS_0$ is included in~$\SS$, that $\gg$ lies in~$\TTT(\SS_0)$ and that $\gg'$ lies in the left-$\SG{\SS_0}$-coset of~$\gg$. Let $(\gg_1 \wdots \gg_\rr, \ggs)$ and $(\gg'_1 \wdots \gg'_{\rr'}, \gg'_*)$ be the amalgam decompositions of~$\gg$ and~$\gg'$. Then we have \ITEM1 $\rr' \ge \rr$, \ITEM2 $\oo{\gg'_\rr} = \oo{\gg_\rr}$ if $\rr \ge 1$ holds, and \ITEM3 $\gg'_\ii = \gg_\ii$ for~$1 \le \ii < \rr$.
\end{lemm}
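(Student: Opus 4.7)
The plan is to compute the amalgam decomposition of $\gg' = \gg\hh$ (where $\hh \in \SG{\SS_0}$) by normalizing the product
$\gg_1 \pdots \gg_\rr \ggs \hh_1 \pdots \hh_q \hh_*$,
in which $(\hh_1 \wdots \hh_q, \hh_*)$ is $\AD\hh$. By Lemma~\ref{L:CanDec}, each $\hh_\ii$ lies in $\SG{\SS_0 \cap \SS_{\oo{\hh_\ii}}}$. The normalization proceeds by merging consecutive same-group factors and sliding $\GG_{1,2}$-factors across group boundaries.

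I first dispatch the routine cases: $\rr = 0$; $q = 0$ (so $\hh \in \GG_{1,2}$, and the only change is that the tail $\ggs$ is replaced by $\ggs \hh$); and $\oo{\hh_1} \ne \oo{\gg_\rr}$. In each of these, no merge occurs at the $\gg_\rr$--$\ggs\hh_1$ junction, so the prefix $\gg_1 \wdots \gg_\rr$ survives intact as an initial segment of $\AD{\gg'}$, and the three conclusions are immediate.

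The critical case is $\rr \ge 1$, $q \ge 1$, and $\jj := \oo{\gg_\rr} = \oo{\hh_1}$. Then $\gg_\rr \ggs \hh_1 \in \GG_\jj$, and if this element were to lie in $\GG_{1,2}$, a cascade of cancellations could eat back into $\gg_1 \wdots \gg_{\rr-1}$. The whole difficulty is therefore concentrated in the sub-claim that $\tail\gg \cdot \hh_1 \notin \GG_{1,2}$. I will prove this by contradiction. Suppose $\tail\gg \cdot \hh_1 \in \SG{\SS_{1,2}}$ and pick a word $\ww \in \WWW{\SS_\jj}$ representing it. Because $(\SS_\jj, \RR_\jj)$ satisfies Property~$\Propp$ with respect to~$\TTT_\jj$, applying $\PPPPe{\SS_{1,2}}{\SS_0 \cap \SS_\jj}\ww$ yields $\vv \in \WWW{\SS_{1,2}}$ and $\uu \in \WWW{\SS_0 \cap \SS_{1,2}}$ with $\cl\vv \in \TTT_\jj(\SS_0 \cap \SS_\jj)$ and $\cl{\vv\uu} = \tail\gg \cdot \hh_1$. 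Since $\hh_1$ and $\cl\uu$ both lie in $\SG{\SS_0 \cap \SS_\jj}$, the elements $\tail\gg$ and $\cl\vv$ belong to the same left-$\SG{\SS_0 \cap \SS_\jj}$-coset in~$\GG_\jj$; the hypothesis $\gg \in \TTT(\SS_0)$ forces $\tail\gg \in \TTT_\jj(\SS_0 \cap \SS_\jj)$ as well, so uniqueness of the transversal representative gives $\tail\gg = \cl\vv \in \SG{\SS_{1,2}} \subseteq \GG_{1,2}$, contradicting $\tail\gg = \gg_\rr \ggs \notin \GG_{1,2}$ (as $\gg_\rr \notin \GG_{1,2}$).

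With the sub-claim in hand, write $\gg_\rr \ggs \hh_1 = t_1 c_1$ with $t_1 \in \TTT_\jj(\SS_{1,2})$ nontrivial and $c_1 \in \GG_{1,2}$. The remaining normalization---sliding $c_1$ against $\hh_2$, then $c_2$ against $\hh_3$, and so on---is routine, since at each step the product $c_\ii \hh_{\ii+1}$ cannot lie in $\GG_{1,2}$ (else $\hh_{\ii+1}$ itself would, contradicting its being an amalgam-decomposition piece). Hence no further pieces disappear, and one obtains $\AD{\gg'} = (\gg_1 \wdots \gg_{\rr-1}, t_1, t_2 \wdots t_q, c_q \hh_*)$, from which the three claims of the lemma follow. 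The main obstacle is the sub-claim, which is the only point where the hypothesis $\gg \in \TTT(\SS_0)$ is used in an essential way and where Property~$\Propp$ of the component presentation $(\SS_\jj, \RR_\jj)$ must be invoked.
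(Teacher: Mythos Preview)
Your proof is correct and follows essentially the same approach as the paper's. The crucial step---showing that $\gg_\rr\ggs\hh_1\in\GG_{1,2}$ is impossible by invoking $\PPP_{\TTT_\jj}(\SS_{1,2},\SS_0\cap\SS_\jj,\ww)$ and using that $\tail\gg\in\TTT_\jj(\SS_0\cap\SS_\jj)$ forces $\tail\gg=\cl\vv\in\GG_{1,2}$---is identical to the paper's argument. The only differences are organizational: you isolate the sub-claim first and then carry out the normalization by an explicit ``sliding'' of the $c_i$'s, whereas the paper interleaves the case analysis and invokes $\AD{\ggs\hh}$ or $\AD{\gg_\rr\ggs\hh}$ as a whole; neither choice affects the substance.
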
 

\begin{proof}
By hypothesis, there exists~$\hh$ in~$\SG{\SS_0}$ such that $\gg' = \gg \hh$ holds. Let $(\hh_1 \wdots \hh_\qq, \hh_*)$ be the amalgam decomposition of~$\hh$. By Lemma~\ref{L:CanDec} applied to~$\SG{\SS_0}$, the elements~$\hh_1 \wdots \hh_\qq, \hh_*$ all lie in~$\SG{\SS_0}$. The results are obvious or vacuously true for $\rr = 0$, and we assume $\rr \ge 1$, with $\oo{\gg_\rr} = \ie$. We determine~$\AD{\gg'}$ starting from~$\AD\gg$ and~$\AD\hh$ using arguments similar to those of Lemma~\ref{L:Pieces}. The point is that the case when the length of the amalgam decomposition decreases turns out to be impossible. Write $\overline\ie$ for~$3 - \ie$. 

For $\qq = 0$, we find $\AD{\gg'} = (\gg_1 \wdots \gg_\rr, \hh \hh_*)$, and $\AL{\gg'} = \rr$, so everything is clear.

Next assume $\qq \ge 1$ with $\hh_1 \in \TTT_{\overline\ie}(\SS_{1,2})$. Assume $\AD{\ggs \hh} = (\ff_1 \wdots \ff_\pp, \ffs)$. By hypothesis, $\ggs \hh_1$ lies in $\GG_{\overline\ie} \setminus \GG_{1,2}$, which, by uniqueness of the amalgam decomposition, implies $\ff_1 \in \TTT_{\overline\ie}(\SS_{1,2})$ (and $\pp = \qq$, but this is not needed here). In this case, we have $\AD{\gg \gg_0} = (\gg_1 \wdots \gg_\rr, \ff_1 \wdots \ff_\pp, \ffs)$. We deduce $\AL{\gg'} = \rr + \pp \ge \rr$ and the other expected results. 

Assume now $\qq \ge 1$ with $\hh_1 \in \TTT_\ie(\SS_{1,2})$ and $\gg_\rr \ggs \hh_1 \notin \GG_{1,2}$. Let $\AD{\gg_\rr \ggs \hh} = (\ff_1 \wdots \ff_\pp, \ffs)$. By hypothesis, $\gg_\rr \ggs \hh_1$ lies in $\GG_\ie \setminus \GG_{1,2}$, which implies $\pp \ge 1$ and $\ff_1 \in \TTT_\ie(\SS_{1,2})$. In this case, we find $\AD{\gg'} = (\gg_1 \wdots \gg_{\rr-1}, \ff_1 \wdots \ff_\pp, \ffs)$. We deduce $\AL{\gg'} = \rr-1 + \pp \ge \rr$ and the other expected results. 

Assume finally $\qq \ge 1$ with $\hh_1 \in \TTT_\ie(\SS_{1,2})$ and $\gg_\rr \ggs \hh_1 \in \GG_{1,2}$. By definition, the hypothesis that $\gg$ belongs to~$\TTT(\SS_0)$ implies that $\gg_\rr \ggs$ lies in~$\TTT_\ie(\SS_0 \cap \SS_\ie)$. On the other hand, by Lemma~\ref{L:CanDec} applied with $\SS' = \SS_0$, the hypothesis that $\hh$ lies in~$\SG{\SS_0}$ implies that $\hh_1$ lies in~$\SG{\SS_0 \cap \SS_\ie}$. Here comes the point. Let $\ww$ be any word of~$\WWW{\SS_\ie}$ that represents $\gg_\rr \ggs \hh_1$. Then $\PPPPe{\SS_{1,2}}{\SS_0 \cap \SS_\ie}\ww$ implies the existence of~$\vv$ in~$\WWW{\SS_{1,2}}$ and~$\uu$ in~$\WWW{\SS_{1,2} \cap \SS_0}$ satisfying $\ww \gozotRe \vv \uu$ and $\cl\vv \in \TTT_\ie(\SS_0 \cap \SS_\ie)$. Now, as $\hh_1$ lies in~$\SG{\SS_0 \cap \SS_\ie}$ and $\TTT(\SS_0 \cap \SS_\ie)$ is a $\SG{\SS_0 \cap \SS_\ie}$-transversal, we must have $\cl\vv = \gg_\rr \ggs$ and $\cl\uu = \hh_1$. But this implies $\gg_\rr \ggs \in \GG_{1,2}$, whence $\gg_\rr \in \GG_{1,2}$, which contradicts the definition of~$\AD\gg$. This case, which is the one that could possibly lead to $\AL{\gg'} < \AL{\gg}$ is therefore impossible. So the proof is complete.
\end{proof}

\begin{lemm}
\label{L:Transversal}
For every~$\SS_0$ included in~$\SS$, the set $\TTT(\SS_0)$ does not contain two distinct elements lying in the same left-$\SG{\SS_0}$-coset.
\end{lemm}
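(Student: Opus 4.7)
The plan is to suppose $\gg, \gg' \in \TTT(\SS_0)$ satisfy $\gg' = \gg \hh$ with $\hh \in \SG{\SS_0}$ and to conclude $\hh = 1$, in three steps.

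First, I would apply Lemma~\ref{L:Length} in both directions---once to the pair $(\gg, \gg')$ and once to the pair $(\gg', \gg)$, the latter being legitimate because $\hh\inv \in \SG{\SS_0}$. This gives $\AL\gg = \AL{\gg'}$, say equal to~$\rr$, and, when $\rr \ge 1$, both $\oo{\gg_\rr} = \oo{\gg'_\rr}$---call this common value~$\ie$---and $\gg_\ii = \gg'_\ii$ for $1 \le \ii < \rr$. Writing $\ff = \gg_1 \pdots \gg_{\rr-1}$ (empty product if $\rr \le 1$), the relation $\gg' = \gg \hh$ then collapses to $\tail\gg \cdot \hh = \tail{\gg'}$, reducing the goal to $\tail\gg = \tail{\gg'}$.

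Second, I would localize~$\hh$ inside a subgroup of the form $\SG{\SS_0 \cap \SS_\ie}$. When $\rr \ge 1$, both tails lie in~$\GG_\ie$, hence so does~$\hh$; as $\AL\hh \le 1$, Lemma~\ref{L:CanDec} applied with $\SS' = \SS_0$ places the possible nontrivial entry of~$\AD\hh$ in $\SG{\SS_0 \cap \SS_\ie}$ and its tail in $\SG{\SS_0 \cap \SS_{1,2}} \ince \SG{\SS_0 \cap \SS_\ie}$, whence $\hh \in \SG{\SS_0 \cap \SS_\ie}$. The case $\rr = 0$ is analogous with $\ie = 1$, since then both $\gg$ and $\gg'$ already lie in $\GG_{1,2} \ince \GG_1$.

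Third, I would verify that $\tail\gg$ and $\tail{\gg'}$ belong to one and the same transversal $\TTT_\ie(\SS_0 \cap \SS_\ie)$ of~$\GG_\ie$; the transversal property, combined with $\hh \in \SG{\SS_0 \cap \SS_\ie}$, then forces $\tail\gg = \tail{\gg'}$ as desired. The pivotal observation for this step is that, for $\rr \ge 1$, the nontriviality of~$\gg_\rr$ as an element of $\TTT_1(\SS_{1,2}) \cup \TTT_2(\SS_{1,2})$ together with $\ggs \in \GG_{1,2}$ forces $\tail\gg \in \GG_\ie \setminus \GG_{1,2}$ (and likewise for~$\tail{\gg'}$); when $\ie = 2$ this implies $\tail\gg \notin \GG_1$, so the asymmetric definition of~$\TTT(\SS_0)$ places both tails in $\TTT_2(\SS_0 \cap \SS_2)$, while when $\ie = 1$ (or $\rr = 0$) both tails land in $\TTT_1(\SS_0 \cap \SS_1)$. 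I expect this third step to be the main friction: the asymmetric preference for~$\TTT_1$ over~$\TTT_2$ in the definition of~$\TTT(\SS_0)$ has to be carefully reconciled with the symmetric statement of Lemma~\ref{L:Length}, and without the observation that tails of positive amalgam length automatically avoid~$\GG_{1,2}$ the case analysis on~$\ie$ would fragment into ambiguous subcases.
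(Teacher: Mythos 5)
Your proof is correct and follows essentially the same route as the paper's: Lemma~\ref{L:Length} applied in both directions to equalize amalgam lengths and the common prefix, reduction to the tails, localization of~$\hh$ in~$\SG{\SS_0 \cap \SS_\ie}$, and the transversal property of~$\TTT_\ie(\SS_0 \cap \SS_\ie)$. The differences are only organizational: the paper splits into the cases $\AL\gg = 0$, $\AL\gg = 1$, $\AL\gg \ge 2$ and reduces the last to the length-one case, whereas you treat the tails uniformly and localize~$\hh$ via Lemma~\ref{L:CanDec} rather than via Lemma~\ref{L:Subgroup}.
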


\begin{proof} 
Assume that $\gg, \gg'$ belong to~$\TTT(\SS_0)$ and lie in the same left-$\SG{\SS_0}$-coset of~$\GG$. By Lemma~\ref{L:Length}, we have $\AL\gg' \ge \AL\gg$, and $\AL\gg \ge \AL{\gg'}$, whence $\AL\gg = \AL{\gg'}$. Put $\hh = \gg\inv \gg'$, an element of~$\SG{\SS_0}$. We now prove $\gg = \gg'$ by considering the various possible values of~$\AL\gg$. 

Assume first $\AL\gg = 0$. Then, by definition, $\gg$ and~$\gg'$ belong to~$\TTT_1(\SS_0 \cap \SS_1)$ (and to~$\GG_{1,2}$), hence $\hh$ belongs to~$\SG{\SS_0} \cap \GG_1$, which, by Lemma~\ref{L:Subgroup}, is~$\SG{\SS_0 \cap \SS_1}$. So $\gg$ and $\gg'$ belong to the same left-$\SG{\SS_0 \cap \SS_1}$-coset in~$\GG_1$. Then, the hypothesis that $\TTT_1(\SS_0 \cap \SS_1)$ is a $\SG{\SS_0 \cap \SS_1}$-transversal in~$\GG_1$ implies~$\gg = \gg'$.

Assume now $\AL\gg = 1$. By definition, there exist~$\ie$ and~$\ie'$ such that $\gg$ belongs to~$\TTT_\ie(\SS_0 \cap \SS_\ie)$ and $\gg'$ belongs to~$\TTT_{\ie'}(\SS_0 \cap \SS_{\ie'})$. By Lemma~\ref{L:Length}\ITEM2, we must have $\ie = \ie'$. It follows that $\hh$ belongs to~$\SG{\SS_0} \cap \GG_\ie$, which is~$\SG{\SS_0 \cap \SS_\ie}$. So $\gg$ and $\gg'$ belong to the same left-$\SG{\SS_0 \cap \SS_\ie}$-coset in~$\GG_\ie$ and, as $\TTT_\ie(\SS_0 \cap \SS_\ie)$ is a $\SG{\SS_0 \cap \SS_\ie}$-transversal in~$\GG_\ie$, we must have $\gg = \gg'$.

Assume finally $\rr \ge 2$. By Lemma~\ref{L:Length}\ITEM3, the amalgam decompositions of~$\gg$ and~$\gg'$ have the form $\AD\gg = (\gg_1 \wdots \gg_\rr, \ggs)$ and $\AD{\gg'} = (\gg_1 \wdots \gg_{\rr-1}, \gg'_\rr, \gg'_*)$. By definition of~$\TTT$, the elements~$\tail\gg$ and~$\tail{\gg'}$ belong to~$\TTT(\SS_0)$ since $\gg$ and~$\gg'$ do, and $\gg' = \gg \hh$ implies
$$\tail{\gg'} = \gg'_\rr \gg'_* = \gg_1\inv \pdots \gg_{\rr-1}\inv \gg' = \gg_1\inv \pdots \gg_{\rr-1}\inv \gg \hh = \gg_\rr \ggs \hh = \tail\gg \hh,$$
so $\tail{\gg'}$ and~$\tail\gg$ lie in the same~$\SG{\SS_0}$-coset. As, by construction, we have $\AL{\tail\gg} = \AL{\tail{\gg'}} = 1$, the induction hypothesis implies $\tail\gg = \tail{\gg'}$, whence $\gg = \gg'$.
\end{proof}
 
We are now ready to complete the argument.

\begin{proof}[Proof of Proposition~\ref{P:Union}]
Let $\SS', \SS_0$ be included in~$\SS$. Two points have to be established, namely that $\TTT(\SS_0)$ is a $\SG{\SS_0}$-transversal in~$\GG$ and that $\PPP_{\TTT}(\SS', \SS_0, \ww)$ holds for every word~$\ww$ of~$\WWW\SS$ that represents an element of~$\SG{\SS'}$. We begin with the second point, which we establish using induction on~$\AL{\cl\ww}$. 

\ITEM1 Assume first $\AL{\cl\ww} \le 1$ with $\cl\ww \in \GG_1$. Applying Lemma~\ref{L:PiecesBis} to~$\SS'$ and~$\ww$, we first find~$\ww'$ in~$\WWW{\SS' \cap \SS_1}$ satisfying $\ww \gozotR \ww'$. Then, applying~$\PPPPo{\SS' \cap \SS_1}{\SS' \cap \SS_0 \cap \SS_1}{\ww'}$, we find~$\vv$ in~$\WWW{\SS' \cap \SS_1}$ and $\uu$ in~$\WWW{\SS' \cap \SS_0 \cap \SS_1}$ satisfying $\ww' \gozotRo \vv \uu$ and $\cl\vv \in \TTT_1(\SS_0 \cap \SS_1)$. By composing, we deduce $\ww \gozotR \vv \uu$ and, by definition, $\TTT_1(\SS_0 \cap \SS_1)$ is included in~$\TTT(\SS_0)$, so $\vv$ and~$\uu$ witness for~$\PPP_{\TTT}(\SS', \SS_0, \ww)$.

\ITEM2 Assume now $\AL{\cl\ww} \le 1$ with $\cl\ww \in \GG_2 \setminus \GG_1$. Arguing as above, but now in~$\GG_2$, we find~$\vv$ in~$\WWW{\SS' \cap \SS_2}$ and $\uu$ in~$\WWW{\SS' \cap \SS_0 \cap \SS_2}$ satisfying $\ww \gozotRt \vv \uu$ and $\cl\vv \in \TTT_2(\SS_0 \cap \SS_2)$. By definition, $\TTT_2(\SS_0 \cap \SS_2) \setminus \GG_1$ is included in~$\TTT(\SS_0)$, so, if $\cl\vv$ does not lie in~$\GG_1$, the words~$\vv$ and~$\uu$ witness for~$\PPP_{\TTT}(\SS', \SS_0, \ww)$. If $\cl\vv$ lies in~$\GG_1$, hence in~$\GG_{1,2}$, one more step is needed to go from~$\TTT_2(\SS_0 \cap \SS_2)$ to~$\TTT_1(\SS_0 \cap \SS_1)$. Now, we have $\AL{\cl\uu} = 0$ with $\cl\uu \in \GG_1$, so by~\ITEM1 we can find~$\vv'$ in~$\WWW{\SS' \cap \SS_1}$ and $\uu'$ in~$\WWW{\SS' \cap \SS_0 \cap \SS_1}$ satisfying $\vv \gozotRo \vv' \uu'$ and $\cl{\vv'} \in \TTT_1(\SS_0 \cap \SS_1)$, hence $\cl{\vv'} \in \TTT(\SS_0)$. Then we deduce $\ww \gozotR \vv' (\uu' \uu)$, and the words~$\vv'$ and~$\uu' \uu$ witness for~$\PPP_{\TTT}(\SS', \SS_0, \ww)$.

\ITEM3 Assume now $\AL{\cl\ww} \ge 2$. Let $\AD{\cl\ww} = (\gg_1 \wdots \gg_\rr, \ggs )$. Applying Lemma~\ref{L:PiecesBis} to~$\SS'$ and~$\ww$, we find words~$\ww_1 \wdots \ww_\rr, \wws$ such that $\ww \gozotR \ww_1 \pdots \ww_\rr \wws$ holds, $\ww_\ii$ lies in~$\WWW{\SS' \cap \SS_{\oo{\gg_\ii}}}$ and represents~$\gg_\ii$ for $\ii = 1 \wdots \rr$, and $\wws$ lies in~$\WWW{\SS' \cap \SS_{1,2}}$ and represents~$\ggs$. Let $\ie = \oo{\gg_\rr}$. By construction, we have $\AL{\gg_\rr \ggs} = 1$ with~$\gg_\rr \ggs \in \GG_\ie$. By~\ITEM1 or~\ITEM2 applied to~$\ww_\rr \wws$, we find words~$\vv_\rr$ in~$\WWW{\SS' \cap \SS_\ie}$ and~$\uu$ in~$\WWW{\SS' \cap \SS_0 \cap \SS_\ie}$ satisfying $\ww_\rr \wws \gozotRe \vv_\rr \uu$ and $\cl{\vv_\rr} \in \TTT_\ie(\SS_0 \cap \SS_\ie)$. So, if we put $\vv = \ww_1 \pdots \ww_{\rr-1} \vv_\rr$, then $\ww \gozot \vv \uu$ is true. 

Two cases are possible. If $\cl{\vv_\rr}$ does not belong to~$\GG_{1,2}$, then $(\gg_1 \wdots \gg_{\rr-1}, \cl{\vv_\rr})$ is the amalgam decomposition of~$\cl\vv$, and $\cl\vv$ lies in~$\TTT(\SS_0)$. Then the words~$\vv$ and~$\uu$ witness for~$\PPP_{\TTT}(\SS', \SS_0, \ww)$. On the other hand, if $\cl{\vv_\rr}$ belongs to~$\GG_{1,2}$, we have $\AL{\cl\vv} < \AL{\cl\ww}$. Then, by induction hypothesis, we can find~$\vv'$ in~$\WWW{\SS'}$ and~$\uu'$ in~$\WWW{\SS' \cap \SS_0}$ satisfying $\vv \gozotR \vv' \uu'$ and $\cl{\vv'} \in \TTT(\SS_0)$. Then we deduce $\ww \gozotR \vv' (\uu' \uu)$, and the words~$\vv'$ and~$\uu' \uu$ witness for~$\PPP_{\TTT}(\SS', \SS_0, \ww)$.

So, the proof of~$\PPP_{\TTT}(\SS', \SS_0, \ww)$ is complete, and the only thing still to be proven is that $\TTT(\SS_0)$ is an $\SG{\SS_0}$-transversal. This amonts to proving two facts, namely that two distinct elements of~$\TTT(\SS_0)$ never lie in the same $\SG{\SS_0}$-coset, which has been done in Lemma~\ref{L:Transversal}, and that, for every~$\gg$ in~$\GG$, there exists an element of~$\TTT(\SS_0)$ is the left-$\SG{\SS_0}$-coset of~$\gg$. Now the latter point follows from~$\PPP_{\TTT}(\SS, \SS_0, \ww)$. Indeed, let~$\ww$ be any word representing~$\gg$. By~$\PPP_{\TTT}(\SS, \SS_0, \ww)$, we find~$\vv$ in~$\WWW\SS$ and~$\uu$ in~$\WWW{\SS_0}$ satisfying $\ww \gozotR \vv \uu$ and $\cl\vv \in \TTT(\SS_0)$. Then $\cl\uu$ lies in~$\SG{\SS_0}$, and $\cl\vv$ is an element of~$\TTT(\SS_0)$ lying in the left-$\SG{\SS_0}$-coset of~$\gg$.Ê 
\end{proof}

Thus the proof that all Artin--Tits presentations of type~FC satisfy Properties~$\Propp$ and~$\Prop$ (Proposition~\ref{P:FC}) is complete.

%%%%%%%%

\section{A syntactic approach}
\label{S:RAAG}

Our proof of Property~$\Prop$ for type~FC is essentially based on the existence of a distinguished normal form for the elements of the considered Artin--Tits group. Although the approach is simple, it seems unlikely that it could lead to a proof of Conjecture~\ref{C:Main} as the construction of a similar normal form in the case of general Artin--Tits presentations seems out of reach. Also, the possible interest of Property~$\Prop$ is to provide some control of a group even in the case when no solution to the word problem is known, and using a preexisting solution to that word problem to establish Property~$\Prop$ is not natural. In this section, we develop another approach, which leads to Property~$\Prop$ directly and does not appeal to any normal form or, more generally, to any solution of the word problem. The principle is to start from a derivation that involves transformations of type~$0, 1$, and~$\infty$ and eliminate type~$\infty$ transformations by introducing type~$2$ transformations instead. 

We show in Subsection~\ref{SS:RAAG} how to realize this strategy in the case of right-angled Artin--Tits presentations. A few additional comments are included in Subsection~\ref{SS:Further}. 

%%%%

\subsection{Right-angled Artin--Tits presentations}
\label{SS:RAAG}

An Artin--Tits presentation is said to be \emph{right-angled} if it only involves commutation relations $\ss \tt = \tt \ss$. Right-angled Artin--Tits presentations all are of FC type, so Proposition~\ref{P:FCMain} implies 

\begin{prop}
\label{P:RA}
Every right-angled Artin--Tits presentation satisfies Property~$\Prop$.
\end{prop}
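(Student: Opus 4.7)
The plan is to give a direct syntactic proof that bypasses normal forms, by taking any derivation witnessing $\cl\ww = 1$ that uses only types~$0$, $1$, $\infty$ (which exists by~\eqref{E:Equiv}) and eliminating each type-$\infty$ step in favour of type-$2$ steps. The first observation is that in the right-angled case every nontrivial special transformation is merely a commutation of two adjacent letters: type~$1$ swaps commuting letters of the same sign ($\ss\tt \leftrightarrow \tt\ss$ or $\tt\inv\ss\inv \leftrightarrow \ss\inv\tt\inv$), while types~$2\smallr$ and~$2\smalll$ swap commuting letters of opposite sign ($\ss\inv\tt \to \tt\ss\inv$ and $\ss\tt\inv \to \tt\inv\ss$). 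Thus $\gozotR$ coincides with the reduction generated by commuting adjacent pairs of letters that commute in the defining graph, together with free cancellation of $\ss\ss\inv$ or $\ss\inv\ss$, and the question reduces to showing that any word representing~$1$ reduces to~$\ew$ via this restricted system.

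I would induct on the number~$k$ of type-$\infty$ steps in a fixed derivation $\ww = \ww_0, \ww_1, \ldots, \ww_n = \ew$. If $k = 0$ there is nothing to do. Otherwise, pick the \emph{last} type-$\infty$ step $\ww_j \to \ww_{j+1}$, which inserts some pair $\ss\ss\inv$ or $\ss\inv\ss$ at a position~$p$; tag the two freshly inserted letters. All subsequent transformations have type~$0$ or~$1$ and preserve letter identities, so each tagged letter can be followed through the derivation until it disappears, at a uniquely determined type-$0$ step, with a uniquely determined partner letter.

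Two cases arise. In the first, the two tagged letters eventually cancel each other; the intermediate type-$1$ commutations must then pass other letters \emph{across} the tagged pair, so deleting the insertion step together with its companion cancellation leaves a valid derivation with one fewer type-$\infty$ step. In the second case, the tagged~$\ss$ cancels with an original letter $(\ss\inv)^a$ of $\ww_j$ and the tagged~$\ss\inv$ cancels with an original letter $\ss^b$ of~$\ww_j$ sitting on the opposite side of the insertion point. The sequence of commutations moving the tagged~$\ss$ toward $(\ss\inv)^a$ certifies that every surviving letter in between commutes with~$\ss$, and symmetrically on the other side. Hence in $\ww_j$ itself the letters $(\ss\inv)^a$ and $\ss^b$ can be brought adjacent by a sequence of commutations of commuting neighbours, using type~$1$ for same-sign swaps and type~$2\smallr$ or~$2\smalll$ for opposite-sign swaps, and then cancelled by type~$0$. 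Splicing this replacement into the derivation eliminates the chosen type-$\infty$ step, and induction closes.

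The hard part will be controlling the second case: the commutation sequences extracted from the original derivation live in intermediate words that differ from $\ww_j$ both by the tagged insertion and by later type-$0$ cancellations involving other letters. To handle this, I would first reproduce in the modified derivation starting from~$\ww_j$ those later cancellations that do not involve the tagged letters, in the order prescribed by the original derivation; this is legitimate because a type-$0$ cancellation and a disjoint commutation can always be exchanged. Only once $\ww_j$ has been brought into the same shape from which the tagged cancellations were fired does one perform the type-$2$ commutations and type-$0$ cancellation that replace the insertion. Making this interleaving argument rigorous, together with a short symmetric case analysis on whether the inserted pair is $\ss\ss\inv$ or $\ss\inv\ss$ and on which side of the insertion each tagged letter finds its partner, is the delicate technical content; once it is in place, Property~$\Prop$ for right-angled Artin--Tits presentations follows.
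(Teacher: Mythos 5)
Your overall strategy is the same as the paper's syntactic argument of Section~\ref{S:RAAG}: take a derivation of type~$0,1,\infty$, track the letters created by the type~$\infty$ insertions, use the fact that any letter which crosses an inserted letter must commute with its generator, and convert the eventual cancellations of the inserted letters into type~$1$ and~$2$ commutations. The paper treats all insertions simultaneously, by indexing letters with their date of creation and projecting (Lemma~\ref{L:Lift}, Lemma~\ref{L:BasicStep}), whereas you remove the last insertion and induct on the number of type~$\infty$ steps; your Case~1 (the inserted pair cancels against itself) is indeed straightforward. But your Case~2 has a genuine gap, and it is exactly at the point you call ``the delicate technical content''. First, a small inaccuracy: the two cancellation partners need not lie on opposite sides of the insertion point (take $\ww_j=\ss\,\ss\inv$ with the pair $\ss\,\ss\inv$ inserted at the right end: the inserted $\ss$ cancels the original $\ss\inv$ and the inserted $\ss\inv$ cancels the original $\ss$, both partners lying to the left); you allow for a case analysis, so this is repairable. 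The serious problem is the deferral/replay plan. After the step in which the inserted $\ss$ cancels an original letter $(\ss\inv)^a$, the original derivation continues on a word from which $(\ss\inv)^a$ has disappeared, so letters lying on either side of its former position may later become adjacent and be commuted or cancelled together. In your modified derivation $(\ss\inv)^a$ is still present and separates those letters, so the corresponding step cannot be replayed; the remark that ``a type-$0$ cancellation and a disjoint commutation can always be exchanged'' does not help, because the obstruction is a surviving letter sitting \emph{between} the two letters of the step, not an interleaving of disjoint steps.

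The repair is precisely the device the paper builds into its type-$0$ rule on augmented words: when the inserted $\ss$ cancels an original $(\ss\inv)^a$, the surviving inserted $\ss\inv$ must \emph{inherit the identity} of the cancelled letter (the relabelling to the minimum index), and one maintains the invariant that every surviving letter trapped between the two inserted letters commutes with~$\ss$ (the ``commutative pair''/regularity condition of Lemma~\ref{L:BasicStep}). That invariant is what guarantees that, at the moment of the cancellation, the relabelled letter can be moved from the position of the inserted $\ss\inv$ to the position of $(\ss\inv)^a$ by type~$1$ and~$2$ commutations, after which the rest of the original derivation can be copied verbatim; this closes your induction. Two further points to state explicitly if you pursue your one-pair-at-a-time version: after the first elimination the tail of the derivation contains type-$2$ steps, so the tracking argument must allow them (harmless, since in the right-angled case they are again single-letter commutations and preserve letter identities), and the sentence ``all subsequent transformations have type~$0$ or~$1$'' is only valid at the first round.
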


We shall now give a new proof of this result relying on a purely syntactic approach. The argument requires several auxiliary developments and will be completed at the end of the subsection. Until there, we assume that $(\SS, \RR)$ is a fixed right-angled Artin--Tits presentation, and denote by~$\GG$ the group~$\GR\SS\RR$. 

Assume that $\ww$ is a word of~$\WWW\SS$ that represents~$1$ in~$\GG$. Then we have $\ww \gozoiR \ew$, so there exists a sequence of words $\ww_0 \wdots \ww_\mm$ with $\ww_0 = \ww$, $\ww_\mm = \ww'$ and, for every~$\kk$, the word~$\ww_\kk$ is obtained from~$\ww_{\kk-1}$ by applying a special transformation of type~$0$, $1$, or~$\infty$. The idea will be to mark the additional letters created in type~$\infty$ steps and to follow their subsequent destiny in the derivation. The point is that, when we erase these additional letters in a convenient way, the remaining words will make a derivation in terms of type~$0, 1$, and~$2$ transformations.

Implementing the above simple principle requires taking into account the step at which each additional letter is created. This leads us to introducing an augmented alphabet~$\aS$ where all letters include a number that corresponds to their age, that is, the date of their creation and, accordingly, an augmented version~$\agoesR{}$ of the relation~$\goesR{}$ for words of~$\WWW\aS$.

\begin{defi}
We denote by~$\aS$ the alphabet consisting of one letter~$\as\ii$ for each letter~$\ss$ of~$\SS$ and each nonnegative integer~$\ii$; we identify~$\ss$ with~$\as0$. We denote by~$\phi$ the alphabetical homomorphism of~$\WWW{\aS}$ to~$ \WWW\SS$ that maps every letter~$\as\ii$ to~$\ss$. For~$\hh \ge 0$, we denote by~$\pi_\hh$ the alphabetical homomorphism of~$\WWW{\aS}$ to itself that maps~$\as\ii$ to itself for~$\ii < \hh$ and to~$\ew$ for~$\ii \ge \hh$.
\end{defi}

Words of~$\WWW{\aS}$ will be called \emph{augmented}. Thus, assuming that $\tta, \ttb, \ttc$ are letters of~$\SS$, a typical augmented word is $\atta1 \attB0 \attc2 \attA1 \attc0$. Its image under~$\phi$ is the (standard) word~$\tta \ttB \ttc \ttA \ttc$ (all indices are forgotten), whereas its image under~$\pi_2$ is $\atta1 \attB0 \attA1 \attc0$ (all letters with index $2$ and larger are erased), and its image under~$\pi_1$ is~$\attB0 \attc0$.

\begin{defi}
\emph{Special} transformations on augmented words are defined as follows:

- type~$0$: Remove some pair $\as\ii^\ee \as\jj^{-\ee}$, $\ee = \pm1$, and replace each remaining occurrence of~$\as\ii^{\pm1}$ and~$\as\jj^{\pm1}$ with~$\as{\min(\ii, \jj)}^{\pm1}$;

- type~$1$: Replace $\as\ii^\ee \at\jj^\ee$ with $\at\jj^\ee \as\ii^\ee$, where $\ss \tt = \tt \ss$ is a relation of~$\RR$ and $\ee$ is~$\pm1$;

- type~$2$: Replace $\as\ii^\ee \at\jj^{-\ee}$ with $\at\jj^{-\ee} \as\ii^\ee$, where $\ss \tt = \tt \ss$ is a relation of~$\RR$ and $\ee$ is~$\pm1$;

- type~$\infty$: Insert some pair $\as\ii^\ee \as\ii^{-\ee}$, $\ee = \pm1$, where $\ii$ is strictly larger than every index of letter occurring in the current word.

\noindent 
We write $\ww \agoesR{X} \ww'$ if $\ww'$ can be obtained from~$\ww$ by a finite sequence of special transformations of type belonging to~$X$.
\end{defi}

Special transformations on augmented words are liftings of the corresponding special transformations on words: a straightforward inspection shows that, if an augmented word~$\ww'$ is the image of another augmented word~$\ww$ under a transformation of type~$\ii$, then the (non-augmented) word~$\phi(\ww')$ is the image of~$\phi(\ww)$ under a transformation of type~$\ii$.

The first observation is that every sequence of special transformations on standard words can be lifted into a sequence of special transformations on augmented words. Hereafter, we say that $(\ww_0, \ww_1, ...)$ is a \emph{special} sequence of (possibly augmented) words if every word~$\ww_\kk$ is obtained from~$\ww_{\kk-1}$ using a special transformation.

\begin{lemm}
\label{L:Lift}
For every special sequence $(\ww_0, \ww_1, ...)$ in~$ \WWW\SS$, there exists a special sequence of augmented words $(\tw_0, \tw_1, ...)$ such that $\tw_0$ equals~$\ww_0$ and, for every~$\kk$, the word~$\ww_\kk$ is the image of~$\tw_\kk$ under~$\phi$ and, moreover, the type of the transformation connecting~$\tw_{\kk-1}$ to~$\tw_\kk$ is the same as the type of the transformation connecting~$\ww_{\kk-1}$ to~$\ww_\kk$.
\end{lemm}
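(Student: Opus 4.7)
The plan is a straightforward induction on~$\kk$, lifting the given derivation one step at a time. For the base case, I set $\tw_0 := \ww_0$, interpreting each letter~$\ss^{\pm1}$ of~$\ww_0$ as the augmented letter~$\as 0^{\pm1}$; then $\phi(\tw_0) = \ww_0$ holds tautologically. For the inductive step, assume $\tw_{\kk-1}$ has been constructed with $\phi(\tw_{\kk-1}) = \ww_{\kk-1}$. Since~$\phi$ acts letter by letter, the positions in~$\tw_{\kk-1}$ corresponding to the factor acted on in~$\ww_{\kk-1}$ contain augmented letters whose images under~$\phi$ are precisely that factor, so there is a canonical way to transcribe the standard transformation in the augmented alphabet and to define~$\tw_\kk$ as the result.

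I then treat the four types case by case. A type-$1$ commutation replacing~$\ss\tt$ by~$\tt\ss$ (or its inverse) lifts to~$\as\ii^\ee\at\jj^\ee \to \at\jj^\ee\as\ii^\ee$, and a (simple) type-$2$ commutation replacing~$\ss^{-\ee}\tt^\ee$ by~$\tt^\ee\ss^{-\ee}$ lifts to~$\as\ii^{-\ee}\at\jj^\ee \to \at\jj^\ee\as\ii^{-\ee}$. An insertion of~$\ss^\ee\ss^{-\ee}$ lifts to the insertion of~$\as\ii^\ee\as\ii^{-\ee}$, where $\ii$ is chosen strictly larger than every index appearing in~$\tw_{\kk-1}$. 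Finally, the removal of a factor~$\ss^\ee\ss^{-\ee}$ from~$\ww_{\kk-1}$ lifts to removing the corresponding factor~$\as\ii^\ee\as\jj^{-\ee}$ from~$\tw_{\kk-1}$ and then uniformly replacing every remaining occurrence of~$\as\ii^{\pm1}$ or~$\as\jj^{\pm1}$ by~$\as{\min(\ii,\jj)}^{\pm1}$.

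Checking~$\phi(\tw_\kk) = \ww_\kk$ is immediate in each case: for types~$1$, $2$, and~$\infty$ the transformation just permutes or adds augmented letters that project to the prescribed standard letters, while for type~$0$ the removed pair projects to~$\ss^\ee\ss^{-\ee}$ and the subsequent relabelling replaces augmented letters of $\phi$-image~$\ss$ by other augmented letters of $\phi$-image~$\ss$, so it is invisible to~$\phi$. The match of types between each standard step and its lifted counterpart is enforced by the construction itself. No serious obstacle arises: the augmented alphabet was introduced precisely so that this lifting goes through mechanically, and the only clause that genuinely needs the extra definitional care is the relabelling in type~$0$, which was built in exactly so that when two augmented letters of different ages cancel each other, the surviving occurrences of those ages are merged into a single common age and no trace of the ambiguity is left at the level of~$\phi$.
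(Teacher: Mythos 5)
Your proposal is correct and follows essentially the same route as the paper: an induction starting from $\tw_0 = \ww_0$ in which each standard step of type~$0$, $1$, or~$2$ is mimicked verbatim in the augmented alphabet (the type-$0$ relabelling being invisible under~$\phi$), and each type~$\infty$ insertion is lifted by choosing a fresh index strictly larger than all indices already present. Your write-up merely spells out the case-by-case verification that the paper compresses into ``we just mimick the transformation''.
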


\begin{proof}
We start from $\tw_0 = \ww_0$ and use an induction. For steps of type~$0, 1$, or~$2$, we just mimick the transformation. For steps of type~$\infty$, that is, when $\ww_\kk$ is obtained from~$\ww_{\kk-1}$ by inserting a pair~$\ss^\ee \ss^{-\ee}$, we define~$\tw_\kk$ from~$\tw_{\kk-1}$ by inserting $\as\ii^\ee \as\ii^{-\ee}$, where $\ii$ is the maximum of all indices of letters occurring in~$\tw_\kk$ augmented by~$1$.
\end{proof}

\begin{exam}
\label{X:Abelian2}
Assume $\SS = \{\tta, \ttb, \ttc\}$ and let $\RR = \{\tta \ttb = \ttb \tta, \ttb \ttc = \ttc \ttb, \tta \ttc = \ttc \tta\}$. Then $(\SS, \RR)$ is a right-angled Artin--Tits presentation. As in Section~\ref{S:Prop}, consider $\ww = \tta \ttB \ttc \ttA \ttb \ttC$. Then, starting from the special sequence
\begin{equation*}
\tta \ttB \ttc \ttA \ttb \ttC
\goes\infty \tta \ttB \ttA \tta \ttc \ttA \ttb \ttC
\goesR1 \tta \ttA \ttB \tta \ttc \ttA \ttb \ttC
\goes0 \ttB \tta \ttc \ttA \ttb \ttC
\goesR1 \ttB \ttc \tta \ttA \ttb \ttC
\goes0 \ttB \ttc \ttb \ttC
\goesR1 \ttB \ttb \ttc \ttC
\goes0 \ttc \ttC
\goes0 \ew
\end{equation*}
and using the method of Lemma~\ref{L:Lift} gives
\begin{multline*}
\atta0 \attB0 \attc0 \attA0 \attb0 \attC0
\agoes\infty \atta0 \attB0 \attA1 \atta1 \attc0 \attA0 \attb0 \attC0
\agoesR1 \atta0 \attA1 \attB0 \atta1 \attc0 \attA0 \attb0 \attC0\\
\agoes0 \attB0 \atta0 \attc0 \attA0 \attb0 \attC0
\agoesR1 \attB0 \attc0 \atta0 \attA0 \attb0 \attC0
\agoes0 \attB0 \attc0 \attb0 \attC0
\agoesR1 \attB0 \attb0 \attc0 \attC0
\agoes0 \attc0 \attC0
\agoes0 \ew,
\end{multline*}
where we write $\atta\ii$ for $\atta{[\ii]}$.
\end{exam}

The main point now is to control the properties of the augmented words that appear when special sequences are lifted. 

\begin{defi}
\ITEM1 An augmented word of the form $\ar\hh^\ee \ww \ar\hh^{-\ee}$ is called an \emph{$\hh$-pair}; it is said to be \emph{commutative} if $\rr \ss = \ss \rr$ is a relation of~$\RR$ for every letter~$\ss$ such that $\ss^{\pm1}$ occurs in~$\phi(\pi_\hh(\ww))$ (that is, such that some letter~$\as\ii^{\pm1}$ with $\ii \le \hh$ occurs in~$\ww$).

\ITEM2 An augmented word~$\ww$ is said to be \emph{regular} if, for every~$\hh \ge 1$, either $\ww$ contains no letter of index~$\hh$, or it contains exactly two such letters forming a commutative $\hh$-pair.
\end{defi}

The argument for Proposition~\ref{P:RA} will be inductive. Here is the basic step for the induction.

\begin{lemm}
\label{L:BasicStep}
Assume that $\ww$ is a regular augmented word and $\ww'$ is obtained from~$\ww$ using one special transformation. Then

\ITEM1 The word~$\ww'$ is regular;

\ITEM2 If $\ww$ or $\ww'$ contains at least one letter of positive index and if $\hh$ is the highest index of a letter occurring in~$\ww$ and~$\ww'$, the relation $\pi_\hh(\ww) \agoesRlong{0,1,2, \infty} \pi_\hh(\ww')$ holds;

\ITEM3 The relation $\pi_1(\ww) \gozotR \pi_1(\ww')$ holds.
\end{lemm}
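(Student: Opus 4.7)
The plan is to establish (i), (ii), and (iii) simultaneously by case analysis on the type of the single special transformation linking $\ww$ to $\ww'$. For type~$\infty$, which inserts an adjacent pair $\as\ii^\ee \as\ii^{-\ee}$ at a fresh index~$\ii$ strictly larger than all previous indices, the new pair is trivially an $\ii$-pair with empty middle and is thus vacuously commutative, settling (i); because $\ii$ coincides with the new top index~$\hh$ and is positive, both $\pi_\hh$ and $\pi_1$ simply erase the insertion, yielding equality of projections and settling (ii) and (iii). For type~$1$ and type~$2$ swaps, no pair structure is altered; the projections either replay the same swap (when both indices involved lie below the projection threshold) or coincide on both sides (when at least one of them is erased).

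The substantive case is type~$0$, which removes an adjacent pair $\as\ii^\ee \as\jj^{-\ee}$ (necessarily sharing a symbol~$\ss$) and relabels every remaining $\as\ii^{\pm1}$ or $\as\jj^{\pm1}$ to index $\min(\ii, \jj)$. The sub-cases $\ii = \jj = 0$ and $\ii = \jj \ge 1$ are immediate: in the first, only two index-$0$ letters disappear; in the second, regularity forces the removed pair to be the entire $\ii$-pair (hence with empty middle), so no relabelling is needed. The non-trivial sub-case is $\ii \ne \jj$, say $\ii < \jj$; here $\ss$ is the symbol of the $\jj$-pair and, when $\ii \ge 1$, of the $\ii$-pair. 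The guiding observation is that commutativity of each pair forbids its own symbol from appearing in its middle, simply because $\ss \ss = \ss \ss$ is not a relation of~$\RR$; this constraint drastically restricts the relative positions of the four letters in play. A direct geometric inspection of the few surviving configurations then shows that, when $\ii \ge 1$, the companion $\as\ii^{-\ee}$ and the relabelled former $\as\jj^\ee$ (now carrying index~$\ii$ and sign~$\ee$) form a new $\ii$-pair whose middle is contained in the union of the middles of the two original pairs, and therefore inherits their commutativity; this gives~(i).

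For (ii) and (iii) in this sub-case, two situations arise. If the projection threshold exceeds~$\jj$, the projection replays the type~$0$ step verbatim. Otherwise the threshold equals~$\jj$---which is forced whenever $\jj$ is the top index of~$\ww$, since both $\as\jj$'s disappear from~$\ww'$---and the projection erases both $\as\jj$ letters; the net effect on the projection reduces to relocating a single copy of~$\as\ii^\ee$ from the slot of the erased $\as\jj^{-\ee}$ to the slot of the relabelled $\as\jj^\ee$. By commutativity of the $\jj$-pair, every letter of index $<\jj$ standing between these two slots commutes with~$\ss$ and, by the no-self-commutation observation above, carries a symbol distinct from~$\ss$; the relocation is therefore realised by a finite succession of type~$1$ swaps (on same-sign pairs) and type~$2$ swaps (on opposite-sign pairs), with no appeal to type~$\infty$. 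Exactly the same argument, carried out with $\pi_1$ and with intermediate letters restricted to index~$0$, proves~(iii) when $\ii = 0 < \jj$; when $0 < \ii < \jj$, the projection $\pi_1$ erases every letter the step touches, so $\pi_1(\ww) = \pi_1(\ww')$ trivially.

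The main obstacle is the position-based bookkeeping in the type~$0$ sub-case $\ii \ne \jj$: one must combine the regularity and commutativity hypotheses to eliminate the geometrically inconsistent orderings of the four involved positions, and then verify in each of the few surviving configurations that the middle of the new $\ii$-pair is covered by the union of the middles of the two original pairs. Once this geometric analysis is in hand, the regularity assertion~(i) and the commutation-based derivations for~(ii) and~(iii) both follow with routine work.
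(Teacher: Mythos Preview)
Your strategy---case analysis on the transformation type---is the paper's. Your treatment of~\ITEM3 is in fact cleaner: you handle~$\pi_1$ directly in each case and observe that a type~$\infty$ insertion is simply erased, whereas the paper first proves~\ITEM2 at the top index and then descends to~$\pi_1$ by a decreasing induction on~$\hh$, arguing only at the end that type~$\infty$ cannot appear because~$\pi_1(\ww')$ carries no positive-index letters. Your explicit ``no self-commutation'' observation (that the symbol of an $\hh$-pair cannot occur with index $<\hh$ in its own middle) is exactly the point the paper uses implicitly when it moves a single $\ss$-letter across a middle by type~$1$/$2$ swaps.

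There is, however, a real gap in your argument for~\ITEM1. Regularity of~$\ww'$ requires that \emph{every} $\hh$-pair with $\hh\ge 1$ remain commutative, not only the pairs you explicitly touch. In the type~$0$ sub-case $\ii<\jj$ you check only the newly formed $\ii$-pair and then assert ``this gives~\ITEM1''. But consider an $\hh$-pair with symbol~$\rr$ and $\ii<\hh<\jj$ whose middle contains the surviving $\as\jj$-letter: before the step that letter is erased by~$\pi_\hh$, whereas after relabelling to index~$\ii<\hh$ it survives, so the symbol~$\ss$ \emph{newly appears} in~$\phi(\pi_\hh(\text{middle}))$. You still owe the relation $\rr\ss=\ss\rr$. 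It does hold---either the removed $\as\ii$-letter already lay in that middle (so $\rr\ss=\ss\rr$ came from the old $\hh$-pair commutativity), or the $\jj$-pair straddles the $\hh$-pair, placing an $\ar\hh$-letter in the $\jj$-pair's middle (so $\rr\ss=\ss\rr$ comes from the $\jj$-pair commutativity)---but that is an argument you have not supplied. The paper does treat the case $\hh\notin\{\ii,\jj\}$ explicitly. Lighter versions of the same omission appear for types~$1$, $2$, and~$\infty$: ``no pair structure is altered'' overlooks the case where one of the swapped letters is itself an end of some $\hh$-pair (its middle then gains or loses a letter, and you need the swap relation $\ss\tt=\tt\ss$ to preserve commutativity), and for type~$\infty$ you check only the inserted pair, not the existing $\hh$-pairs whose middles may now enclose it.
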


\begin{proof}
We first establish~\ITEM1 and~\ITEM2 by considering the possible types of special transformations connecting~$\ww$ to~$\ww'$. We begin with type~$0$, that is, we assume that $\ww'$ is obtained from~$\ww$ by removing some factor $\as\ii^\ee \as\jj^{-\ee}$, $\ee = \pm1$, and replacing the remaining occurrences of~$\as\ii^{\pm1}$ and~$\as\jj^{\pm1}$ with~$\as{\min(\ii, \jj)}^{\pm1}$. 

\ITEM1 If $\hh$ is neither~$\ii$ nor~$\jj$, the possible letters of index~$\hh$ in~$\ww'$ are those of~$\ww$, and the commutativity condition is unchanged (if the deleted factor is not nested in the possible $\hh$-pair), or weakened. In any case, the hypothesis that $\ww$ satisfies the properties implies that $\ww'$ does too. Assume now $\hh = \ii < \jj$. Then we must be in the case when $\ww$ contains a commutative $\hh$-pair, say $\as\hh^\ee \vv \as\hh^{-\ee}$, and $\ww'$ is obtained by removing one of the letters~$\as\hh^{-\ee}$ together with some adjacent~$\as\jj^{\pm1}$, which occurs in some $\jj$-pair $\as\jj^\dd \vv' \as\jj^{-\dd}$. So, up to symmetries, we may assume that we are in the situation
$$ ... \ \as\hh^\ee \vv \as\hh^{-\ee} \ \as\jj^\ee \vv' \as\jj^{-\ee} \ ... \quad\agoes0\quad ... \ \as\hh^\ee \vv \ \vv' \as\hh^{-\ee}\ ... \, ,$$
and we see that the expected properties hold in~$\ww'$: indeed, $\ww'$ contains one $\hh$-pair, no $\jj$-pair, and the $\hh$-pair is commutative, because, by hypothesis, the letter~$\ss$ commutes with all letters of~$\phi(\pi_\hh(\vv))$ and all letters of~$\phi(\pi_\jj(\vv'))$, hence, a fortiori, with all letters of~$\phi(\pi_\hh(\vv\, \vv'))$. The other cases are similar. Finally, assume $\hh = \ii > \jj$. Then the typical case is
$$ ... \ \as\hh^\ee \vv \as\hh^{-\ee} \ \as\jj^\ee \vv' \as\jj^{-\ee} \ ... \quad\agoes0\quad ... \ \as\jj^\ee \vv \ \vv' \as\jj^{-\ee}\ ... \, ,$$
and, again, the hypothesis that~$\ss$ commutes with all letters of~$\phi(\pi_\hh(\vv))$ and all letters of~$\phi(\pi_\jj(\vv'))$ implies that $\ss$ commutes with all letters of~$\phi(\pi_\jj(\vv\, \vv'))$. So $\ww'$ is regular.

\ITEM2 If $\ii$ and~$\jj$ are strictly smaller than~$\hh$, the letters of index~$\ii$ and~$\jj$ are invariant when $\pi_\hh$ is applied, and the word~$\pi_\hh(\ww')$ is obtained from~$\pi_\hh(\ww)$ by a type~$0$ transformation, that is, $\pi_\hh(\ww) \agoesR{0} \pi_\hh(\ww')$ holds. Assume now $\ii = \hh > \jj$.
Up to symmetries, we are in the situation of
$$ ... \ \as\hh^\ee \vv \as\hh^{-\ee} \ \as\jj^\ee \vv' \as\jj^{-\ee} \ ... \quad\agoes0\quad ... \ \as\jj^\ee \vv \ \vv' \as\jj^{-\ee}\ ... \, .$$
When we apply~$\pi_\hh$, the above words respectively become
$$ ... \ \pi_\hh(\vv) \ \as\jj^\ee \pi_\hh(\vv') \as\jj^{-\ee} \ ... \quad \mbox{and}\quad ... \ \as\jj^\ee \pi_\hh(\vv) \pi_\hh(\vv') \as\jj^{-\ee} \ ... \, .$$
The point is that, as, by hypothesis, the $\hh$-pair $\as\hh^\ee \vv \as\hh^{-\ee}$ is commutative, the letter~$\ss$ commutes with every letter occurring in~$\pi_\hh(\vv)$, and, therefore, we have $\pi_\hh(\vv) \ \as\jj^\ee \agoesR{1,2} \as\jj^\ee \ \pi_\hh(\vv)$, whence $\pi_\hh(\ww) \agoesR{1,2} \pi_\hh(\ww')$.

We turn to type~$1$ and assume now that $\ww'$ is obtained from~$\ww$ by replacing $\as\ii^\ee \at\jj^\ee$ with $\at\jj^\ee \as\ii^\ee$, where $\ss \tt = \tt \ss$ is a relation of~$\RR$ and $\ee$ is~$\pm1$. 

\ITEM1 Here the same letters occur in~$\ww$ and~$\ww'$, so the possible letters of index~$\hh$ occurring in~$\ww$ and~$\ww'$ are the same. In particular, there exists an $\hh$-pair in~$\ww$ if and only if there is one in~$\ww'$. Moreover, the hypothesis that the possible $\hh$-pair is commutative in~$\ww$ implies that the corresponding $\hh$-pair in~$\ww'$ is commutative as well. Indeed, if $\hh$ is neither~$\ii$ nor~$\jj$, the letters nested in the $\hh$-pair are not changed; otherwise, up to symmetries, the typical case is
$$ ... \as\ii^\ee \, \ar\hh^\ee \vv \ar\hh^{-\ee} \ ... \quad\agoesR1\quad ... \ar\hh^\ee\ \as\ii^\ee \, \vv \ar\hh^{-\ee}\ ... \, ,$$
and the point is that the hypotheses that $\rr$ commutes with~$\phi(\pi_\hh(\vv))$ and, on the other hand, with $\ss$, implies that it commutes with $\phi(\pi_\hh(\as\ii^\ee \vv))$ in every case. So the possible $\hh$-pair of~$\ww'$ is commutative, and $\ww'$ is regular

\ITEM2 Again, if $\hh$ is larger than~$\ii$ and~$\jj$, removing the letters of index~$\hh$ changes nothing essential and we obtain $\pi_\hh(\ww) \agoesR1 \pi_\hh(\ww')$. On the other hand, if $\hh = \ii > \jj$ holds, removing the letters of index~$\hh$ cancels the effect of the transformation, and we simply have $\pi_\hh(\ww) = \pi_\hh(\ww')$, hence (trivially) $\pi_\hh(\ww)\agozotiR \pi_\hh(\ww')$.

We now consider type~$2$ and assume that $\ww'$ is obtained from~$\ww$ by replacing $\as\ii^\ee \at\jj^{-\ee}$ with $\at\jj^{-\ee} \as\ii^\ee$, where $\ss \tt = \tt \ss$ is a relation of~$\RR$ and $\ee$ is~$\pm1$. Both for~\ITEM1 and~\ITEM2, the argument is similar to that of type~$1$, the only difference being that the commuted letters have now opposite signs (a specificity of the right-angled case).

Finally we consider type~$\infty$, that is, assume that $\ww'$ is obtained from~$\ww$ by inserting some pair $\as\ii^\ee \as\ii^{-\ee}$, $\ee = \pm1$, where $\ii$ is strictly larger than every index of letter occurring in~$\ww$. 

\ITEM1 If $\hh$ is not~$\ii$, in which case $\hh < \ii$ holds by definition, the possible $\hh$-pair of~$\ww$ provides an $\hh$-pair in~$\ww'$. We have no commutation hypothesis involving the letter~$\ss$, but, even if the inserted pair is nested in the $\hh$-pair as in
$$ ... \ \ar\hh^\ee \vv\vv' \ar\hh^{-\ee} \ ... \quad \agoes\infty \quad ... \ \ar\hh^\ee \vv \as\ii^\ee \as\ii^{-\ee} \vv' \ar\hh^{-\ee} \ ... \, ,$$
we are sure that $\rr$ commutes with every letter of~$\phi(\pi_\hh(\vv \as\ii^\ee \as\ii^{-\ee} \vv'))$ as the latter word coincides with~$\phi(\pi_\hh(\vv \vv'))$. So $\ww'$ is regular.

\ITEM2 By definition, we have $\pi_\hh(\ww) = \pi_\hh(\ww')$, whence again $\pi_\hh(\ww)\agozotiR \pi_\hh(\ww')$.

So the proof of~\ITEM1 and~\ITEM2 is complete, and it remains to deduce~\ITEM3. First, we claim that $\pi_\hh(\ww) \agozotiR \pi_\hh(\ww')$ holds for every~$\hh \ge 1$. Indeed, the result is obvious if $\hh$ is larger than every index of letter occurring in~$\ww$ and~$\ww'$ since, in this case, we have $\pi_\hh(\ww) = \ww$ and~$\pi_\hh(\ww') = \ww'$. Then the result follows from~\ITEM2, using a decreasing induction on~$\hh$ since applying~$\pi_\hh$ to a word where the largest index is~$\hh$ yields a word where the largest index is at most~$\hh-1$. So we obtain $\pi_1(\ww) \agozotiR \pi_1(\ww')$. Now, the word~$\pi_1(\ww')$ contains no letter of positive index. As no type~$\infty$ transformation can lead to a word containing no letter with positive index, the only possibility is that $\pi_1(\ww)\agozotR \pi_1(\ww')$ holds, as expected. 
\end{proof}

We can now combine the results to obtain the expected new proof of Proposition~\ref{P:RA}.

\begin{proof}[Proof of Proposition~\ref{P:RA}]
Let~$\ww$ be a word of~$\WWW\SS$ that represents~$1$ in~$\GR\SS\RR$. Then, by~\eqref{E:Equiv}, we have $\ww \gozoiR \ew$. Fix a special sequence $\ww_0, \ww_1, ..., \ww_\mm$ going from~$\ww$ to the empty word.

By Lemma~\ref{L:Lift}, we can find a special sequence of augmented words~$\tw_0, \tw_1, ...$ such that $\tw_0$ coincides with~$\ww_0$ and, for every~$\kk$, the word~$\ww_\kk$ is the image of~$\tw_\kk$ under~$\phi$.

By definition, the (augmented) word~$\tw_0$ contains no letter of positive index, hence it is regular. Hence, by Lemma~\ref{L:BasicStep}\ITEM1, every augmented word~$\tw_\kk$ is regular. 
Then Lemma~\ref{L:BasicStep}\ITEM3 implies $\pi_1(\tw_{\kk-1}) \agozotR \pi_1(\tw_\kk)$ for each~$\kk$. We deduce $\pi_1(\tw_0) \gozotR \pi_1(\tw_\mm)$, which is the expected relation $\ww \gozotR \ew$.
\end{proof}

\begin{exam}
\label{X:Abelian3}
Applying the above argument to the case of Example~\ref{X:Abelian2} amounts to collapsing all letters with positive index and forgetting indices. Starting from the initial sequence that contains a type~$\infty$ step (top), we lift it into an augmented sequence (middle, we drop the zero indices), and project it into a new sequence that contains no type~$\infty$ steps (down):

\begin{picture}(110,25)(3,-1)
\put(0,20){$\tta \ttB \ttc \ttA \ttb \ttC$}
\put(12.5,20){$\goes\infty$}
\put(18.5,20){$\tta \ttB \ttA \tta \ttc \ttA \ttb \ttC$}
\put(36.5,20){$\goesR1$} 
\put(43.5,20){$\tta \ttA \ttB \tta \ttc \ttA \ttb \ttC$}
\put(61,20){$\goes0$} 
\put(66,20){$\ttB \tta \ttc \ttA \ttb \ttC$}
\put(78.5,20){$\goesR1$} 
\put(84,20){$\ttB \ttc \tta \ttA \ttb \ttC$}
\put(96.5,20){$\goes0$} 
\put(101,20){$\ttB \ttc \ttb \ttC$}
\put(109,20){$\goesR1$} 
\put(114.5,20){$\ttB \ttb \ttc \ttC$}
\put(123,20){$\goes0$} 
\put(127,20){$\ttc \ttC$}
\put(132,20){$\goes0$} 
\put(137,20){$\ew$}

\put(0,10){$\tta\ttB \ttc \ttA \ttb \ttC$}
\put(12.5,10){$\agoes\infty$}
\put(17,10){$\tta \ttB \attA1 \atta1 \ttc \ttA \ttb \ttC$}
\put(36.5,10){$\agoesR1$} 
\put(42,10){$\tta \attA1 \ttB \atta1 \ttc \ttA \ttb \ttC$}
\put(61,10){$\agoes0$} 
\put(66,10){$\ttB \tta \ttc \ttA \ttb \ttC$}
\put(78.5,10){$\agoesR1$} 
\put(84,10){$\ttB \ttc \tta \ttA \ttb \ttC$}
\put(96.5,10){$\agoes0$} 
\put(101,10){$\ttB \ttc \ttb \ttC$}
\put(109,10){$\agoesR1$} 
\put(114.5,10){$\ttB \ttb \ttc \ttC$}
\put(123,10){$\agoes0$} 
\put(127,10){$\ttc \ttC$}
\put(132,10){$\agoes0$} 
\put(137,10){$\ew$}

\put(0,0){$\tta\ttB \ttc \ttA \ttb \ttC$}
\put(14,0){$=$}
\put(20,0){$\tta\ttB \ttc \ttA \ttb \ttC$}
\put(37.5,0){$=$} 
\put(45,0){$\tta \ttB \ttc \ttA \ttb \ttC$}
\put(60,0){$\goesR2$} 
\put(66,0){$\ttB \tta \ttc \ttA \ttb \ttC$}
\put(78.5,0){$\goesR1$} 
\put(84,0){$\ttB \ttc \tta \ttA \ttb \ttC$}
\put(96.5,0){$\goes0$} 
\put(101,0){$\ttB \ttc \ttb \ttC$}
\put(109,0){$\goesR1$} 
\put(114.5,0){$\ttB \ttb \ttc \ttC$}
\put(123,0){$\goes0$} 
\put(127,0){$\ttc \ttC$}
\put(132,0){$\goes0$} 
\put(137,0){$\ew$.}

\put(4.5,15){$\uparrow\!\phi$}
\put(4.5,5){$\downarrow\!\pi_1$}
\put(24.5,15){$\uparrow\!\phi$}
\put(24.5,5){$\downarrow\!\pi_1$}
\put(49.5,15){$\uparrow\!\phi$}
\put(49.5,5){$\downarrow\!\pi_1$}
\put(70.5,15){$\uparrow\!\phi$}
\put(70.5,5){$\downarrow\!\pi_1$}
\put(88.5,15){$\uparrow\!\phi$}
\put(88.5,5){$\downarrow\!\pi_1$}
\put(103.5,15){$\uparrow\!\phi$}
\put(103.5,5){$\downarrow\!\pi_1$}
\put(117.5,15){$\uparrow\!\phi$}
\put(117.5,5){$\downarrow\!\pi_1$}
\put(127.5,15){$\uparrow\!\phi$}
\put(127.5,5){$\downarrow\!\pi_1$}
\put(136.5,15){$\uparrow$}
\put(136.5,5){$\downarrow$}
\end{picture}

\noindent (Of course, this example is nearly trivial as one type~$\infty$ transformation only occurs in the initial sequence.) 
\end{exam}

%%%%

\subsection{Further remarks}
\label{SS:Further}

Extending the method of Subsection~\ref{SS:RAAG} to more general Artin--Tits presentations is a challenge we leave open. The technical problem is to find augmented versions of type~$2$ transformations: in the right-angled case, the latter just are commutations, and the definition is clear; in more complicated cases, things are not so clear. Let us just mention that, in the case of a length~$3$ relation $\ss \tt \ss = \tt \ss \tt$, geometric interpretations suggest that the proper way of defining an augmented version of the type~$2$ transformation $\ss\inv \tt \goesR{2} \tt \ss \tt\inv \ss\inv $ could be to put
$\as\ii\inv \at\jj \agoesR{2} \at\kk \as\jj \at\ii\inv \as\kk\inv $, where $\kk$ is larger than every index of letter occurring in the current word. 

Let us still mention two more approaches. First, a natural way for proving Conjecture~\ref{C:Main} could be to establish the confluence of the relation~$\gozotR$, namely to prove that, if we have both $\ww \gozotR \ww'$ and $\ww \gozotR \ww''$, then there exists a word~$\ww'''$ satisfying $\ww' \gozotR \ww'''$ and $\ww'' \gozotR \ww'''$. However, as the relation~$\gozotR$ need not be Noetherian in general (that is, there may exist infinite special sequences), establishing confluence is not easy.

On the other hand, working with $\gozotR$ is in general difficult because this relation is highly non-deterministic: for every word~$\ww$, there exist in general many words~$\ww'$ such that $\ww \gozotR \ww'$ is satisfied. In~\cite{Dfo}, an algorithm called handle reduction is constructed in the case of classical braids, that is, of Artin--Tits groups of type~A. This algorithm, called \emph{handle reduction}, extends free group reduction, and it returns for every word~$\ww$ an equivalent word~$\HR\ww$ with the property that, if $\ww$ represents~$1$, then $\HR\ww$ is the empty word. However, this algorithm is not a normal form, as $\ww \eqR \ww'$ need not imply $\HR\ww = \HR{\ww'}$. A precise analysis of the algorithm shows that $\ww \gozotR \HR\ww$ always holds and, therefore, we obtain another proof of Property~$\Prop$ in this case. In this approach, handle reduction is used as a strategy for choosing, for every word~$\ww$, a distinguished word among the many words~$\ww'$ that satisfy $\ww \gozotR \ww'$. It is natural to wonder whether such an approach, which a priori is less ambitious than constructing a normal form, might apply to more general situations. 

%%%%%%%%

\end{document}